\numberwithin{equation}{section}
\newtheorem{thm}{Theorem}[section]
\newtheorem{prop}[thm]{Proposition}
\newtheorem{lemma}[thm]{Lemma}
\newtheorem{cor}[thm]{Corollary}
\theoremstyle{definition}
\theoremstyle{remark}
\newtheorem{rmk}[thm]{Remark}
\newtheorem{ex}[thm]{Example}
\newtheorem*{con}{Conventions}
\newcommand{\C}{\mathbb{C}}
\newcommand{\N}{\mathbb{N}}
\newcommand{\R}{\mathbb{R}}
\newcommand{\T}{\mathbb{T}}
\newcommand{\Z}{\mathbb{Z}}
\newcommand{\FF}{\mathcal{F}}
\newcommand{\KK}{\mathcal{K}}
\newcommand{\TT}{\mathcal{T}}
\newcommand{\OO}{\mathcal{O}}
\newcommand{\im}{{\operatorname{Im}}}
\newcommand{\Ind}{\operatorname{Ind}}
\newcommand{\lsp}{\operatorname{span}}
\newcommand{\clsp}{\overline{\operatorname{span}}}
\newcommand{{\Kum}}{\operatorname{Kum}}
\newcommand{\stem}{\operatorname{stem}}
\newcommand{\Aut}{\operatorname{Aut}}
\newcommand{\BS}{\operatorname{BS}}
\newcommand{\rt}{\textrm{r}}
\newcommand{\lt}{\textrm{l}}
\newcommand{\op}{\textrm{op}}
\begin{document}

\title[Toeplitz algebras
of Baumslag-Solitar semigroups]{Phase transitions on the Toeplitz algebras\\
of Baumslag-Solitar semigroups}
\author[Clark]{Lisa Orloff Clark}
\author[an Huef]{Astrid an Huef}
\author[Raeburn]{Iain Raeburn}
\address{Department of Mathematics and Statistics, University of Otago, PO Box 56, Dunedin 9054, New Zealand.}
\email{\{lclark, astrid, iraeburn\}@maths.otago.ac.nz} 
\thanks{This research was supported by the Marsden Fund of the Royal Society of New Zealand.}
\date{11 March 2015}

\begin{abstract}
Spielberg has recently shown that Baumslag-Solitar groups associated to pairs of positive integers are quasi-lattice ordered 
in the sense of  Nica. Thus they have  tractable Toeplitz algebras. Each of these algebras carries a natural dynamics. 
Here we construct the equilibrium states (the KMS states) for these dynamics. For inverse temperatures larger than a critical value, 
there is a large simplex of KMS states parametrised by probability measures on the unit circle. At the critical value, and under a 
mild hypothesis, there is a phase transition in which this simplex collapses to a singleton. There is a further phase transition 
at infinity, in the sense that there are many ground states which cannot be realised as limits of KMS states with finite inverse temperatures.
 \end{abstract}
 
 \maketitle
 
\section{Introduction}

Spielberg \cite{SCP} has recently studied a large family of $C^*$-algebras which includes the $C^*$-algebras of higher-rank graphs \cite{KP, RSY2} and the boundary quotients of quasi-lattice ordered groups \cite{N, CL2}. He has also shown that the Baumslag-Solitar groups are quasi-lattice ordered with boundary quotients that are typically Kirchberg algebras, and has computed the $K$-theory of these boundary quotients \cite{SBS}.

A quasi-lattice ordered group also has a (much larger) Toeplitz algebra, and the Toeplitz algebras of groups similar to Baumslag-Solitar groups have recently  been shown to exhibit interesting phase transitions. Indeed, there is nontrivial overlap\footnote{More precisely, the groups $\BS(1,d)$ are discussed in \cite[\S9]{LRR}. There is similar overlap between the structural results and $K$-theory computations in \cite{EaHR} and \cite{SBS}.} between the Toeplitz algebras studied in \cite{LR, LRR} and the Toeplitz algebras of the Baumslag-Solitar groups studied in \cite{SBS} (see \cite[\S9]{LRR}). So one naturally wonders whether there are interesting phase transitions on the Toeplitz algebras of Baumslag-Solitar groups. Here we confirm that this is indeed the case.

Suppose that $c$ and $d$ are nonzero integers. The Baumslag-Solitar group $G=\BS(c,d)$ is the group generated by two elements $a,b$ subject only to the relation $ab^c=b^da$. When $c$ and $d$ are positive, we consider  the subsemigroup $P$ of $G$ generated by $a$ and $b$. This semigroup defines a partial order on $G$: $g\leq h$ means that $g^{-1}h\in P$. Spielberg proved in \cite[Theorem~2.11]{SBS} that the pair $(G,P)$ is quasi-lattice ordered in the sense of Nica \cite{N}. The Toeplitz algebra is the $C^*$-subalgebra $\TT(P)$ of $B(\ell^2(P))$ generated by a left-regular representation of $P$ by isometries $\{T_x:x\in P\}$ (but see \S\ref{back} for further discussion of our conventions). This algebra carries a natural gauge action of the circle, which we can lift to an action $\alpha$ of $\R$. We are interested in the KMS states of the dynamical system $(\TT(P),\alpha)$.

We show that for inverse temperatures $\beta$ larger than $\ln d$, there is a large simplex of KMS$_\beta$ states parametrised by the probability measures on the circle.  When $d$ does not divide $c$, there is a phase transition at the \emph{critical inverse temperature} $\ln d$ in which this simplex collapses to a single point, and the KMS$_{\ln d}$ state factors through the boundary quotient of \cite{CL2}. The condition ``$d$ does not divide $c$'' has previously occurred in Spielberg's analysis of the groupoid model for the boundary quotient, where it is shown to be necessary and sufficient for the groupoid to be topologically principal (which he calls ``essentially free'') \cite[Theorem~4.9]{SBS}. 

We begin with a section on background material: we discuss our conventions concerning quasi-lattice ordered groups and their Toeplitz algebras, and the normal form for elements of Baumslag-Solitar groups which we will use throughout. The normal form identifies a family of words in $P$ that play a vital role in computations in $G$ and $P$. We call these words ``stems'', and in \S\ref{sect:stems} we establish  some properties of the map which sends an arbitary element of $P$ to its stem. In \S\ref{sect:pres} we give a presentation of our Toeplitz algebra which will allow us to build Hilbert-space representations. Then in \S\ref{sect:char}, we turn to KMS states. The Toeplitz algebra $\TT(P)=C^*(\{T_x:x\in P\})$ is spanned by the elements $T_xT_y^*$, and the KMS states are the states that satisfy a commutation relation involving products of two spanning elements. In Proposition~\ref{charKMS} we give a characterisation of KMS states in terms of their values on individual spanning elements. This implies, for example, 
that all KMS states at real inverse temperatures factor through the quotient in which the generator $T_b$ is unitary. 

Our main theorem about the KMS$_\beta$ states for $\beta>\ln d$ is Theorem~\ref{KMSToe}, and the rest of \S\ref{sect:large} is devoted to its proof. The strategy is a refinement of the one developed in \cite{LR} and  \cite{LRR}. To build KMS states, we exploit that all KMS states think $T_b$ is unitary: we take a carefully chosen unitary representation $W$ of the subgroup generated by $b$, and induce it to a large unitary representation $\Ind W$ of $G$. The KMS states come from the isometric representation obtained by restricting $(\Ind W)|_P$ to a suitable invariant (but not reducing!) subspace. Our results at the critical inverse temperature are in Proposition~\ref{KMScrit}, and we show by example that they are sharp: when $d$ divides $c$, there is more than one KMS$_{\ln d}$ state.  

Our last main result is Theorem~\ref{thmground}, where we identify the ground and KMS$_\infty$ states of our system. This seems to be harder than in previous computations of KMS structure: ground states need not factor through the same quotient of $\TT(P)$, and hence we cannot use induced representations. But by mimicking what happens in \S\ref{sect:large}, we can build suitable isometric representations with our bare hands. We close with an appendix in which we prove that the quasi-lattice ordered group $(G,P)$ is amenable in the sense of \cite{N,LRold}. This result is not strictly needed in the rest of the paper, but it does simplify things notationally because it implies that the Toeplitz algebra is universal for Nica-covariant representations of $P$ (see Corollary~\ref{same}).

\section{Background}\label{back}

\subsection{Quasi-lattice ordered groups}
Suppose that $G$ is a group and $P$ is a subsemigroup such that $P\cap P^{-1}=\{e\}$.
Then there is a partial order on $G$ such that 
\[
g\leq h\Longleftrightarrow h\in gP\Longleftrightarrow g^{-1}h\in P.
\]
This partial order is left-invariant, in the sense that $g\leq h\Longrightarrow kg\leq kh$. 

According to Nica \cite{N}, the pair $(G,P)$ is a \emph{quasi-lattice ordered group} if every pair $g,h$ in $G$ with a common upper bound in $P$ has a least upper bound $g\vee h$ in $P$. Subsequently, Crisp and Laca showed that it suffices to check that every element $g\in G$ with an upper bound in $P$ has a least upper bound in $P$ \cite[Lemma~7]{CL1} (and that useful lemma contains several other equivalent reformulations of the definition). We write $g\vee h<\infty$ if $g$ and $h$ have an upper bound in $P$, and $g\vee h=\infty$ otherwise.

Suppose that $(G,P)$ is quasi-lattice ordered. We consider the Hilbert space $\ell^2(P)$ with the orthonormal basis $\{e_x:x\in P\}$ of point masses. For each $x\in P$, there is an isometry $T_x$ on $\ell^2(P)$ such that $T_xe_y=e_{xy}$ for $y\in P$. We have $T_e=1$ (the identity operator), and $T_xT_y=T_{xy}$. In other words, $T$ is a homomorphism of the monoid $P$ into the monoid of isometries on $\ell^2(P)$, and we say that $T$ is an \emph{isometric representation} of $P$. Nica observed that the representation $T$ has the extra property 
\begin{equation}\label{Nicacov}
T_xT_x^*T_yT_y^*=\begin{cases}T_{x \vee y} T_{x \vee y}^*&\text{if $x\vee y<\infty$}\\
0&\text{if $x\vee y=\infty$.}
\end{cases}
\end{equation}
Now we say that an isometric representation satisfying \eqref{Nicacov} is  \emph{Nica covariant}. Nica covariance is equivalent to
\begin{equation}\label{altNica}
T_x^*T_y =\begin{cases}
T_{x^{-1}(x \vee y)}T_{y^{-1}(x \vee y)}^*&\text{if $x\vee y<\infty$}\\
0&\text{if $x\vee y=\infty$.}
\end{cases}
\end{equation}

A quasi-lattice ordered group $(G,P)$ has two $C^*$-algebras: the \emph{Toeplitz algebra} $\TT(P)$ is the $C^*$-subalgebra of $B(\ell^2(P))$ generated by the operators $\{T_x:x\in P\}$, and the \emph{universal $C^*$-algebra} $C^*(G,P)$ is generated by a universal Nica-covariant representation $i:P\to C^*(G,P)$. Nica covariance implies that every word in the $i(x)$ and their adjoints reduces to one of the form $i(x)i(y)^*$, and hence
\[
C^*(G,P)=\clsp\{i(x)i(y)^*:x,y\in P\}.
\]
The Toeplitz representation $T:P\to \TT(P)$ induces a surjection $\pi_T:C^*(G,P)\to \TT(P)$, and a major issue considered in \cite[\S4]{N} is when $\pi_T$ is an isomorphism. 

Because $x\vee y=y\vee x$, Nica covariance implies that the range projections $i(x)i(x)^*$ commute with each other, and then 
$D:=\clsp \{i(x)i(x)^*:x\in P\}$ is a commutative $C^*$-subalgebra. There is a positive norm-decreasing linear map $E:C^*(G,P)\to D$ such that $E(i(x)i(y)^*)=\delta_{x,y}i(x)i(x)^*$ (see \cite[\S4.2]{N} or \cite[Proposition~3.1]{LRold}), and we say that $(G,P)$ is \emph{amenable} if $E$ is faithful. Nica proved that if $(G,P)$ is amenable, then the Toeplitz representation $\pi_T$ is injective (see  \cite[\S4.2]{N} or \cite[Corollary~3.9]{LRold}). This implies that the Toeplitz algebra has the universal property of $(C^*(G,P),i)$, and justifies the following:

\begin{con}
All the quasi-lattice ordered groups $(G,P)$ in this paper are 
amenable (see Theorem~\ref{BSamenable}). 
So it makes no difference whether we use  $C^*(G,P)$ or $\TT(P)$. We choose to write $C^*(G,P)$ for 
the algebra because we want to emphasise the universal property, but write $T$ for the universal Nica-covariant representation of $P$ in $C^*(G,P)$. We write $\N=\{0,1,2,\dots\}$. 
\end{con}

\subsection{Baumslag-Solitar groups} We fix positive integers $c$ and $d$. Then the \emph{Baumslag-Solitar group} is the group
\[
G := \langle a, b : ab^c = b^da \rangle;
\] 
if we want to emphasise the dependence on the numbers $c,d$, we write $G=\BS(c,d)$. We consider the submonoid $P$ of $G$ generated  by
by $a$ and $b$. Spielberg proved in \cite[Theorem~2.11]{SBS} that (provided $c$ and $d$ are positive) $(G,P)$ is quasi-lattice ordered. For the rest of this paper, $(G,P)$ denotes one of these groups.

Following \cite{SBS}, we write $\theta$ for the homomorphism $\theta:G \to \Z$ 
such that $\theta(a)=1$ and $\theta(b) = 0$, and call $\theta(g)$ the \emph{height} of $g$. Baumslag-Solitar 
groups are examples of Higman-Neumann-Neumann extensions, and each element has a unique normal form 
\[
g=b^{s_0}a^{\varepsilon_1}b^{s_1}\cdots b^{s_{k-1}}a^{\varepsilon_k}b^{s_k}
\]
in which each $\varepsilon_i$ is $\pm 1$, $0\leq s_{i-1}<d$ when $\varepsilon_i=1$, and $0\leq s_{i-1}<c$ when $\varepsilon_i=-1$ (see, for example, Theorem~2.1 on \cite[page~182]{LS}).  For elements of $P$, we have $\varepsilon_i =1$ for all $k$ and $0\leq s_i<d$ for all $i<k$; there is no restriction on $b^{s_k}$ except that $s_k\geq 0$, and we have $k=\theta(x)$. 

\section{Stems and their properties}\label{sect:stems}

Each $x\in P$ has a unique normal form $x=b^{s_0}a b^{s_1}a\cdots b^{s_{k-1}}ab^{s_k}$ with $0\leq s_i<d$ for $i<k$ and $k=\theta(x)$. 
We then write 
\[
\stem(x):=b^{s_0}a b^{s_1}a\cdots b^{s_{k-1}}a
\]
for the \emph{stem} of $x$. We write $\Sigma_k$ for the set of possible stems with height $k$, including $\Sigma_0:=\{e\}$; note that each $\Sigma_k$ is finite with cardinality $d^k$.

Our constructions of KMS states will involve the Hilbert space $\bigoplus_{k\geq 0}\ell^2(\Sigma_k)$, and hence properties of stems will be important throughout the paper. In this section, we describe some of these properties.

\begin{lemma}\label{forlisa}
Suppose that $x,y\in P$. Then $\stem(x\stem(y))=\stem(xy)$. If $s$ and $t$ satisfy $y=\stem(y)b^t$ and $x\stem(y)=\stem(x\stem(y))b^s$, then $xy=\stem(xy)b^{s+t}$.
\end{lemma}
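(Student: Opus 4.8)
The plan is to reduce both assertions to a single elementary observation about the normal form: right multiplication by a power of $b$ leaves the stem unchanged. Precisely, I would first show that for every $z\in P$ and every $t\geq 0$ one has $\stem(zb^t)=\stem(z)$. To prove this I would write $z$ in its normal form $z=b^{s_0}ab^{s_1}a\cdots b^{s_{k-1}}ab^{s_k}$, so that $z=\stem(z)b^{s_k}$ and hence $zb^t=\stem(z)b^{s_k+t}$. Since the constraints $0\leq s_i<d$ for $i<k$ are untouched and the final exponent is subject only to $s_k+t\geq 0$, this word is again in normal form, with the same height $k=\theta(z)$; stripping off the last power of $b$ then gives $\stem(zb^t)=\stem(z)$.

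Granting this, the first assertion is immediate. I would write $y=\stem(y)b^t$ with $t\geq 0$ (which is just the definition of the stem) and set $z:=x\stem(y)$, so that $xy=zb^t$. The observation then yields $\stem(xy)=\stem(zb^t)=\stem(z)=\stem(x\stem(y))$.

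For the second assertion I would simply substitute. With $s,t$ as in the statement, $xy=(x\stem(y))b^t=\stem(x\stem(y))b^sb^t=\stem(x\stem(y))b^{s+t}$, and the first assertion rewrites $\stem(x\stem(y))$ as $\stem(xy)$, giving $xy=\stem(xy)b^{s+t}$.

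The whole argument is a formal consequence of the uniqueness of the normal form, so I do not anticipate a genuine obstacle. The one point I would be careful about is justifying that appending $b^t$ on the right merely increases the unrestricted final exponent, without triggering any use of the relation $ab^c=b^da$ and without altering the height or the intermediate exponents; this is exactly what guarantees that the stem is preserved, and it is what makes the two displayed substitutions legitimate.
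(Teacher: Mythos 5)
Your proof is correct and follows essentially the same route as the paper's: both arguments write $y=\stem(y)b^t$, observe that $xy=\stem(x\stem(y))b^{s+t}$ is already a normal form, and invoke uniqueness of normal forms to conclude. The only difference is organizational --- you isolate the observation ``right multiplication by $b^t$ preserves the stem'' as a preliminary step, whereas the paper compares the two normal forms $\stem(xy)b^r$ and $\stem(x\stem(y))b^{s+t}$ for $xy$ directly.
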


\begin{proof}
There exists $t$ such that $y=\stem(y)b^t$, and then 
\[
xy=x\stem(y)b^t=(\stem(x\stem(y))b^s)b^t\quad\text{for some $s\in \N$.}
\]
On the other hand, we have $xy=\stem(xy)b^r$ for some $r\in \N$. Since both $\stem(xy)b^r$ and $\stem(x\stem(y))b^{s+t}$ are normal forms for $xy$, the uniqueness of normal forms implies that $\stem(x\stem(y))=\stem(xy)$ and that $r=s+t$. 
\end{proof}

\begin{lemma}\label{maponstems}
\begin{enumerate}
\item\label{hkm} For all $k\in \N$ and $m\geq 0$, the map $h_{k,m}:x\mapsto \stem(b^mx)$ is a bijection of $\Sigma_k$ onto $\Sigma_k$.

\item\label{gk} For all $k\in \N$, the map $x\mapsto \stem(b^cax)$ is an injection of $\Sigma_k$ into $\Sigma_{k+1}$.
\end{enumerate}
\end{lemma}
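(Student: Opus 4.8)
The plan is to treat both parts with a single cancellation argument performed in the ambient group $G$, using uniqueness of normal forms rather than tracking the carries created by the relation $ab^c=b^da$.

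The first step is to record the following observation: if $u,v\in\Sigma_k$ and $ub^i=vb^j$ in $G$ for some $i,j\in\N$, then $u=v$ and $i=j$. This holds because a stem in $\Sigma_k$ has all of its exponents (those preceding each of the $k$ copies of $a$) strictly less than $d$ and no trailing power of $b$, so appending $b^i$ (respectively $b^j$) with $i,j\geq 0$ yields a word that is already in normal form, the trailing exponent carrying no upper constraint. Uniqueness of the normal form then forces the two words to agree coordinate by coordinate, giving $u=v$ and $i=j$. I would also note at the outset that both maps land where claimed: since $\theta(b)=0$ and $\theta(a)=1$, one has $\theta(b^mx)=k$ and $\theta(b^cax)=k+1$, so $\stem(b^mx)\in\Sigma_k$ and $\stem(b^cax)\in\Sigma_{k+1}$.

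For part~(\ref{hkm}) I would prove injectivity directly and then invoke finiteness of $\Sigma_k$. Suppose $h_{k,m}(x)=h_{k,m}(y)=:\sigma$ for $x,y\in\Sigma_k$, and write $b^mx=\sigma b^p$ and $b^my=\sigma b^q$ with $p,q\in\N$ (possible by the definition of the stem, applied in the monoid $P$). Eliminating $\sigma$ gives $b^my=b^mxb^{q-p}$, and cancelling $b^m$ on the left, which is legitimate in $G$, yields $yb^p=xb^q$; the observation then gives $x=y$. Since $\Sigma_k$ is finite, an injective self-map of $\Sigma_k$ is automatically a bijection. For part~(\ref{gk}) the argument is identical, except that I cancel the fixed prefix $b^ca$ in place of $b^m$: from $\stem(b^cax)=\stem(b^cay)=\sigma$ I write $b^cax=\sigma b^p$ and $b^cay=\sigma b^q$, deduce $b^cay=b^caxb^{q-p}$, cancel $b^ca$ on the left to obtain $yb^p=xb^q$, and again conclude $x=y$. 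This gives the asserted injection $\Sigma_k\hookrightarrow\Sigma_{k+1}$, which need not be surjective, consistent with the statement.

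The steps are routine once the cancellations are set up, so the one point that genuinely needs care — and the main thing to get right — is the opening observation. Its content is that right-multiplying a stem by a nonnegative power of $b$ leaves the word in normal form; this is precisely what lets me avoid computing the carry explicitly and reduce both assertions to cancelling a fixed left prefix in $G$ followed by an appeal to uniqueness of normal forms.
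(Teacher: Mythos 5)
Your proof is correct, and it takes a genuinely different, more economical route than the paper's. The paper proves part~(\ref{hkm}) by induction on $k$: writing $x=b^iay$, it computes $h_{k+1,m}(b^iay)=b^ja\,h_{k,nc}(y)$ where $m+i=nd+j$, and chases the carry $nc$ through the remaining stem; part~(\ref{gk}) then requires a separate digit-cancellation lemma (Lemma~\ref{sublem}) and a second induction along the normal forms of $x$ and $y$. You replace both inductions by a single cancellation principle: a stem followed by a nonnegative power of $b$ is itself a normal form, so uniqueness of normal forms shows that $ub^i=vb^j$ with $u,v\in\Sigma_k$ and $i,j\in\N$ forces $u=v$ and $i=j$; injectivity of both maps then follows by cancelling the fixed left factor ($b^m$ or $b^ca$) in the group $G$, and part~(\ref{hkm}) is completed by finiteness of $\Sigma_k$ exactly as in the paper. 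Your argument in fact proves more: for every $w\in P$, the map $x\mapsto\stem(wx)$ is injective from $\Sigma_k$ into $\Sigma_{k+\theta(w)}$, which subsumes both parts at once. What the paper's computational approach buys is explicit bookkeeping of how powers of $b$ propagate across a stem via the relation $b^da=ab^c$ (the carries $nc$, and later $c^jd^{-j}t$), which is the kind of information needed again in Lemma~\ref{idsummands}; but for the statement of this lemma your shorter argument is fully sufficient, and the one step you flagged as needing care --- that appending $b^i$ to a stem leaves the word in normal form --- is indeed valid, since the trailing exponent in the normal form for elements of $P$ carries no upper bound.
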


\begin{proof}[Proof of part~\eqref{hkm}]
 We prove by induction on $k$ that $h_{k,m}$ is a bijection for all $m\in \N$. The result is trivial if $k=0$. 
For $k=1$, $h_{1,m}(b^ia)=b^ja$ where $m+i=nd+j$ and $0\leq j<d$; since addition by $m$ modulo $d$ is a bijection on $\{0,1,\cdots, d-1\}$, $h_{1,m}$ is a bijection. Suppose that $h_{k,m}$ is a bijection for all $m$. Because $\Sigma_{k+1}$ is finite, 
it suffices to see that $h_{k+1,m}$ is one-to-one. So suppose that $h_{k+1,m}(x)=h_{k+1,m}(x')$. We can write 
$x=b^iay$ for some $y\in \Sigma_k$.  We now define $n,j$ by  $m+i=nd+j$ and $0\leq j<d$, and then 
\[
h_{k+1,m}(b^iay)=\stem(b^{m+i}ay)=b^ja\stem(b^{nc}y)=b^ja h_{k,nc}(y).
\]
Doing the same for $x'=b^{i'}ay'$ shows that
\[
b^ja h_{k,nc}(y)=b^{j'}ah_{k,n'c}(y')\text{ where $m+i=nd+j$ and $m+i'=n'd+j'$}.
\]
Now uniqueness of the normal form forces $j=j'$ and $h_{k,nc}(y)=h_{k,n'c}(y')$. Since $j=j'$ and 
$|i-i'|<d$, we must have $i=i'$ and $n=n'$ too. Now the injectivity of $h_{k,nc}$ implies that $y=y'$ and $x=b^jay=x'$.
\end{proof}

For the proof of part~\eqref{gk}, we separate out a calculation. Notice that it applies with $m=c$, 
and then gives \eqref{gk} for $k=1$. (Since $\Sigma_0=\{e\}$, \eqref{gk} is trivially true for $k=0$.)

\begin{lemma}\label{sublem}
Suppose that $m\in \N$, $b^ia, b^ja\in \Sigma_1$, and $\stem(b^mab^ia)=\stem (b^mab^ja)$. Then $i=j$.
\end{lemma}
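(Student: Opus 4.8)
The plan is to recognise each stem $\stem(b^m a b^i a)$ as a value of one of the bijections $h_{k,m}$ from Lemma~\ref{maponstems}\eqref{hkm}, and then read off the conclusion from injectivity. First I would note that since $b^i a, b^j a\in\Sigma_1$ we have $0\le i,j<d$, so that $ab^ia=b^0ab^ia$ and $ab^ja=b^0ab^ja$ both lie in $\Sigma_2$ (the leading exponent $0$ and the middle exponents $i,j$ all satisfy the normal-form bound $<d$). This verification is the one place that needs a moment's care, but it is immediate.

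Next I would rewrite the hypothesis using the definition of $h_{2,m}$. Because $b^m a b^i a=b^m(ab^ia)$, the definition gives $\stem(b^m a b^i a)=\stem(b^m(ab^ia))=h_{2,m}(ab^ia)$, and likewise $\stem(b^m a b^j a)=h_{2,m}(ab^ja)$. Thus the assumed equality becomes $h_{2,m}(ab^ia)=h_{2,m}(ab^ja)$. By Lemma~\ref{maponstems}\eqref{hkm} the map $h_{2,m}$ is a bijection of $\Sigma_2$ onto itself, hence injective, so $ab^ia=ab^ja$ as elements of $\Sigma_2\subseteq P$, and uniqueness of the normal form then forces $i=j$. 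I should stress that there is no circularity: the induction proving part~\eqref{hkm} never refers to part~\eqref{gk} or to this lemma, so part~\eqref{hkm} is legitimately available here.

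The only genuine obstacle is the bookkeeping hidden in the claim $\stem(b^m(ab^ia))=h_{2,m}(ab^ia)$, which in the slick route is simply the definition but in a hands-on route must be unwound. If one preferred a self-contained computation (presumably the reason the authors isolate this step), the alternative is to write $m=qd+r$ with $0\le r<d$, apply the relation $b^d a=ab^c$ repeatedly to obtain $b^m a=b^r a b^{qc}$, deduce $\stem(b^m a b^i a)=b^r a\,\stem(b^{qc+i}a)=b^r a\,h_{1,qc}(b^i a)$, and then invoke injectivity of $h_{1,qc}$ (the $k=1$ case of part~\eqref{hkm}, or directly the fact that adding $qc$ modulo $d$ is a bijection of $\{0,\dots,d-1\}$). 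In either form the heart of the matter is just injectivity of an $h$-map; the reduction via the Baumslag--Solitar relation is the main calculational chore.
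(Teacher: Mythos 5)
Your proof is correct, and it takes a genuinely different route from the paper's. The paper proves Lemma~\ref{sublem} by the direct computation you sketch only as a fallback at the end: write $m=nd+m'$ with $0\le m'<d$, push $b^{nd}$ across the first $a$ via $b^da=ab^c$ to get $b^mab^ia=b^{m'}ab^{nc+i}a$, reduce $nc+i$ modulo $d$, and compare normal forms; uniqueness forces the reduced exponents to agree, and $0\le i,j<d$ then forces $i=j$. Your main route instead observes that $ab^ia$ and $ab^ja$ lie in $\Sigma_2$, so the hypothesis reads $h_{2,m}(ab^ia)=h_{2,m}(ab^ja)$, and injectivity of $h_{2,m}$ (Lemma~\ref{maponstems}\eqref{hkm} with $k=2$) plus cancellation of $a$ gives $i=j$. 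Your non-circularity check is right and is the key point: part~\eqref{hkm} is proved by a self-contained induction before Lemma~\ref{sublem} appears, and its proof never invokes part~\eqref{gk} or Lemma~\ref{sublem}. As for what each approach buys: the paper's computation is elementary and keeps Lemma~\ref{sublem} independent of Lemma~\ref{maponstems}, which matters for the authors only in that they use the lemma as the engine of their induction proving part~\eqref{gk}; your route is shorter and in fact shows more --- since $x\mapsto ax$ is an injection of $\Sigma_k$ into $\Sigma_{k+1}$ and $\stem(b^cax)=h_{k+1,c}(ax)$, the same one-line argument yields part~\eqref{gk} directly, so both Lemma~\ref{sublem} and the paper's separate inductive proof of part~\eqref{gk} could be dispensed with entirely.
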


\begin{proof}
Write $m=nd+m'$ with $0\leq m'<d$. Then 
\[
b^mab^ia=b^{m'}ab^{nc+i}a\ \text{ and }\ b^mab^ja=b^{m'}ab^{nc+j}a.
\]
Now we write $nc +i=n_id+i'$ and $nc+j=n_jd+j'$, and the hypothesis gives
\[
b^{m'}ab^{i'}a=\stem(b^mab^ia)=\stem (b^mab^ja)=b^{m'}ab^{j'}a.
\]
Thus the uniqueness of the normal form implies that $i'=j'$, and we have
\[
i=n_id+i'-nc=n_jd+j'-nc +(n_i-n_j)d=j+(n_i-n_j)d.
\]
Now $0\leq i,j<d$ implies that $n_i-n_j=0$, and hence $i=j$.
\end{proof}

\begin{proof}[Proof of Lemma~\ref{maponstems}\,\eqref{gk}]
Suppose that $x,y\in \Sigma_k$ and $\stem(b^cax)=\stem(b^cay)$. We write $x$ and $y$ in normal form as
\[
x=b^{x_0}ab^{x_1}a\cdots b^{x_{k-1}}a\ \text{ and }\ y=b^{y_0}ab^{y_1}a\cdots b^{y_{k-1}}a,
\]
and prove by induction on $n$ that $x_i=y_i$ for $0\leq i\leq n<k$. Since the stem of $b^cax$ 
begins with the stem of $b^cab^{x_0}a$, and similarly for $b^cay$, Lemma~\ref{sublem} 
implies that $x_0=y_0$. Suppose that we have $x_i=y_i$ for $i\leq n<k-1$. Next we put into normal form
\[
b^cab^{x_0}ab^{x_1}a\cdots b^{x_n}a=b^{s_0}ab^{s_1}a\cdots b^{s_n}ab^m;
\]
by the inductive hypothesis, we have
\begin{align*}
b^cax&=b^{s_0}ab^{s_1}a\cdots b^{s_n}ab^mb^{x_{n+1}}\cdots b^{x_{k-1}}a,\text{ and }\\ 
b^cay&=b^{s_0}ab^{s_1}a\cdots b^{s_n}ab^mb^{y_{n+1}}\cdots b^{y_{k-1}}a.
\end{align*}
Thus
\begin{align*}
b^{s_0}ab^{s_1}&a\cdots b^{s_n}a\stem(b^mab^{x_{n+1}}a\cdots b^{x_{k-1}}a)=\stem(b^cax)\\
&=\stem(b^cay)=b^{s_0}ab^{s_1}a\cdots b^{s_n}a\stem(b^mab^{y_{n+1}}a\cdots b^{y_{k-1}}a),
\end{align*}
and we deduce that 
\[
\stem(b^mab^{x_{n+1}}a\cdots b^{x_{k-1}}a)=\stem(b^mab^{y_{n+1}}a\cdots b^{y_{k-1}}a).
\]
In particular, we have $\stem(b^mab^{x_{n+1}}a)=\stem(b^mab^{y_{n+1}}a)$, 
and Lemma~\ref{sublem} implies that $x_{n+1}=y_{n+1}$.
\end{proof}

\begin{lemma}
 \label{lem:iains}Let $x,y \in P$ such that $x \vee y < \infty$.  
\begin{enumerate} 
 \item\label{it:iains1} If $\theta(y) > \theta(x)$ then there exists $t \in \N$ such that 
$x \vee y =yb^t$.
\item\label{it:iains2} If $\theta(x) = \theta(y)$ then there exists $t \in \N$ such that either 
\[x \vee y = x = yb^t\text{ or } x \vee y =y=xb^t.\] 
\end{enumerate}
\end{lemma}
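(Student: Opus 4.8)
The plan is to reduce everything to a single combinatorial statement about normal forms, using two elementary facts that I would establish first. The first is that $\theta$ is a homomorphism with $\theta(P)\subseteq\N$, and that the only elements of $P$ of height $0$ are the powers $b^t$ with $t\ge 0$ (read off the normal form). The second, which I will call the \emph{prefix property}, is: if $y\le z$ in $P$ and $m=\theta(y)$, then the normal form of $z$ begins with the normal form of $\stem(y)$; that is, writing $z=b^{r_0}ab^{r_1}a\cdots b^{r_{n-1}}ab^{r_n}$, the initial word $b^{r_0}ab^{r_1}a\cdots b^{r_{m-1}}a$ equals $\stem(y)$. I would prove this by writing $z=yv$ with $v\in P$, substituting $y=\stem(y)b^u$, and (when $\theta(v)\ge 1$, the other case being trivial) performing the single carry $b^{u+v_0}a=b^{r}ab^{qc}$, where $v_0$ is the leading exponent of $v$ and $u+v_0=qd+r$ with $0\le r<d$, needed to begin normalising $\stem(y)b^{u}v$. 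Since normalising an element of $P$ only ever pushes excess exponent to the right, the first $m$ syllables coming from $\stem(y)$ are undisturbed. An immediate consequence, since the initial segments of a fixed $z$ are nested, is that if $x\le z$ and $y\le z$ with $\theta(x)\le\theta(y)$, then $\stem(x)$ is an initial segment of $\stem(y)$.

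Part~\eqref{it:iains2} (equal heights) is then almost immediate. If $\theta(x)=\theta(y)=k$ and $z$ is a common upper bound, the prefix property gives $\stem(x)=\stem(y)=:\sigma$, so $x=\sigma b^s$ and $y=\sigma b^u$, whence $x^{-1}y=b^{u-s}$. If $u\ge s$ then $b^{u-s}\in P$, so $x\le y=xb^{u-s}$ and $x\vee y=y$; the case $u<s$ is symmetric.

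For part~\eqref{it:iains1} I would first reduce to the claim that $x\le yb^t$ for some $t\in\N$. Granting this claim, $yb^t$ is a common upper bound of height $\theta(y)$, so from $y\le x\vee y\le yb^t$ and additivity of $\theta$ we get $\theta(x\vee y)=\theta(y)$; writing $x\vee y=yv$ forces $\theta(v)=0$, i.e. $v=b^{t'}$, which is the assertion. To prove the claim, combine the common upper bound with the prefix property to write $\stem(y)=\stem(x)\,w$ with $w$ a stem of height $\theta(y)-\theta(x)\ge 1$. Putting $x=\stem(x)b^s$ and $y=\stem(y)b^u$, a short computation gives $x^{-1}yb^t=b^{-s}w\,b^{u+t}$, so the claim reduces to the purely combinatorial statement: \emph{if $w\in P$ is a stem and $s\in\N$, then $b^{-s}w\,b^{N}\in P$ for all sufficiently large $N$.} I would prove this by inducting on $\theta(w)$ and propagating a carry to the right: whenever a leading syllable has negative exponent $-e$, the identity $b^{-d}a=ab^{-c}$ lets me rewrite $b^{-e}a=b^{qd-e}ab^{-qc}$ with $q=\lceil e/d\rceil$, making the leading exponent $qd-e\ge 0$ at the cost of depositing a negative multiple of $c$ into the next syllable; running through the finitely many syllables of $w$ leaves every interior exponent nonnegative and dumps the entire accumulated deficit into the trailing syllable, which $b^{N}$ renders nonnegative once $N$ is large.

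The main obstacle is exactly this last computation: organising the rightward carry so that each interior syllable stays nonnegative while only the trailing one absorbs the (possibly very negative) accumulated deficit, and checking that no $a^{-1}$ is ever created. Everything else — the prefix property and the two reductions to the combinatorial claim — is routine normal-form bookkeeping together with the defining property of the least upper bound.
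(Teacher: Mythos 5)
Your proof is correct, and it shares the overall skeleton of the paper's argument: extract the prefix relation $\stem(y)=\stem(x)w$ from a common upper bound, manufacture a common upper bound of the form $yb^N$ lying above $x$, and conclude that $x\vee y$ itself has the form $yb^t$. But the central existence step is handled by a genuinely different mechanism. Where you prove directly that $b^{-s}wb^N\in P$ for all large $N$ by propagating carries rightward with the identity $b^{-d}a=ab^{-c}$, the paper instead invokes the surjectivity of the stem map $h_{k,s}:\tau\mapsto\stem(b^s\tau)$ from Lemma~\ref{maponstems}\eqref{hkm} to produce a stem $\tau$ with $b^s\tau=\sigma b^r$; note that these are essentially the same statement, since $b^s\tau=\sigma b^r$ says exactly that $b^{-s}\sigma b^r=\tau$ lies in $P$. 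The paper's route gets this existence for free from machinery (injectivity plus finiteness of $\Sigma_k$) that it needs elsewhere anyway, and it never has to manipulate words with negative exponents; yours is self-contained and constructive, producing the explicit carry $b^{-e}a=b^{qd-e}ab^{-qc}$ with $q=\lceil e/d\rceil$ and an explicit threshold for $N$, at the cost of redoing by hand what Lemma~\ref{maponstems} already provides. Two smaller differences are to your credit: you actually prove the prefix property, which the paper asserts without proof as a consequence of $x\vee y<\infty$ and uniqueness of normal forms; and your sandwich $y\le x\vee y\le yb^t$ is a cleaner way to finish than the paper's claim that $x\vee y$ \emph{equals} the constructed upper bound $\stem(x)\sigma b^{\max(n,r)}$, which is slightly more than is justified (or needed) -- only the inequality, and hence the form $yb^t$, actually follows.
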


\begin{proof}
For (\ref{it:iains1}), suppose $x = \stem(x)b^s$ for some $s \in \N$.  Then because $x\vee y < \infty$ and $\theta(y) > \theta(x)$,  
$\stem(y)=\stem(x)\sigma$ for some stem $\sigma$. Then 
$y = \stem(x)\sigma b^n$ for some $n \in \N$. 
Now choose a stem $\tau$ such that $\stem(b^s\tau)=\sigma$ by Lemma~\ref{maponstems}(\ref{hkm}) (using that the map $h_{\theta(\sigma),s}$ is surjective  and so $\sigma$ must be in the image).  
That is, $b^s\tau = \sigma b^r$ for some $r \in \N$.
Then
\[x\tau=\stem(x)b^s\tau=\stem(x)\sigma b^r.\]
Therefore
\[x\vee y = \stem(x)\sigma b^{\max(n,r)}=yb^{\max(n,r)-n}\]
so if we let $t=\max(n,r)-n$, then $x\vee y = yb^t$.

For part (\ref{it:iains2}), if $x\vee y < \infty$ and $\theta(y) = \theta(x)$, then putting $x \vee y$ into normal form tells us
that $\stem(x) = \stem(y)$.  The result follows.  
\end{proof}

\section{A presentation for the Toeplitz algebra}\label{sect:pres}

We want to build representations of $C^*(G,P)$. For this we use:

\begin{prop}\label{defrel}
Suppose that $\pi:C^*(G,P)\to B$ is a homomorphism. Then $U:=\pi(T_b)$ and $V:=\pi(T_a)$ are isometries, and satisfy
\begin{enumerate}
\item\label{t1} $VU^c=U^dV$;
\item\label{t4} $U^*V=U^{d-1}VU^{*c}$;
\item\label{t5} $V^*U^jV=0$ for $1\leq j<d$.
\end{enumerate}
Conversely, if $U$ and $V$ are isometries in a $C^*$-algebra $B$ satisfying \eqref{t1}, \eqref{t4} and \eqref{t5}, then there is a Nica covariant representation $S:P\to B$ such that $S_a=V$ and $S_b=U$, and a homomorphism $\pi_{U,V}:C^*(G,P)\to B$ such that $U=\pi_{U,V}(T_b)$ and $V=\pi_{U,V}(T_a)$. 
\end{prop}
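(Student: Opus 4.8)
**The plan is to prove both directions by direct computation with the universal Nica-covariant representation $T$ of $P$ in $C^*(G,P)$.**

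For the forward direction, I would start from the observation that $\pi$ carries isometries to isometries, so $U = \pi(T_b)$ and $V = \pi(T_a)$ are isometries. The three relations should all follow by applying $\pi$ to relations already satisfied by $T_a$ and $T_b$ in $C^*(G,P)$. For (a), the Baumslag-Solitar relation $ab^c = b^da$ holds in $G$, and since $T$ is a monoid homomorphism, $T_aT_{b^c} = T_aT_b^c = T_bT_a^c$... more precisely $T_{ab^c} = T_{b^da}$, giving $VU^c = U^dV$ upon applying $\pi$. For (c), I would use Nica covariance in the form \eqref{altNica}: one computes $T_b^{*j}T_a = T_b^{*(j-1)}(T_b^*T_a)$ and checks that for $1 \le j < d$ the relevant join $b^j \vee a$ is infinite (the normal-form constraints force this), so $T_b^{*j}T_a = 0$, hence $V^*U^jV = 0$. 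For (b), I expect to compute $T_b^*T_a$ directly from \eqref{altNica}: since $b \vee a = b^da = ab^c$ (the least common upper bound in $P$), we get $b^{-1}(b\vee a) = b^{d-1}a$ and $a^{-1}(b\vee a) = b^c$, so $T_b^*T_a = T_{b^{d-1}a}T_{b^c}^* = U^{d-1}V U^{*c}$; applying $\pi$ yields \eqref{t4}.

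**The converse is the substantive direction.** Given isometries $U,V$ satisfying the three relations, I would define $S_a := V$, $S_b := U$, and extend $S$ to all of $P$ via the normal form: for $x = b^{s_0}ab^{s_1}a\cdots b^{s_{k-1}}ab^{s_k}$ set $S_x := U^{s_0}VU^{s_1}V\cdots U^{s_{k-1}}VU^{s_k}$. The first task is to show $S$ is a well-defined monoid homomorphism, i.e.\ that $S$ respects the relation $ab^c = b^da$, which is exactly relation \eqref{t1}; since the normal form gives a confluent rewriting system for $P$, compatibility with the single defining relation should suffice to make $S$ well-defined on $P$. The main work is then verifying that $S$ is \emph{Nica covariant}, for which I would use the characterisation \eqref{altNica}: it suffices to show $S_x^*S_y = S_{x^{-1}(x\vee y)}S_{y^{-1}(x\vee y)}^*$ when $x \vee y < \infty$ and $S_x^*S_y = 0$ when $x\vee y = \infty$. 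Once Nica covariance is established, the universal property of $(C^*(G,P),T)$ immediately produces the homomorphism $\pi_{U,V}$ with $\pi_{U,V}(T_b) = U$ and $\pi_{U,V}(T_a) = V$.

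**The hard part will be verifying Nica covariance of $S$ from only the three relations \eqref{t1}--\eqref{t5}.** The strategy is reductive: every product $S_x^*S_y$ can be simplified by repeatedly moving adjoints past non-adjoints, and the three relations are precisely what is needed to perform these moves. Relation \eqref{t4} lets one rewrite $U^*V$, relation \eqref{t5} kills the cross-terms $V^*U^jV$ that correspond to incompatible stems (the $x\vee y = \infty$ case), and relation \eqref{t1} handles the interaction between powers of $U$ and $V$. I would organise this by induction on $\theta(x) + \theta(y)$, reducing to the base computations $U^*U = 1$, $V^*V = 1$, and the elementary products governed by \eqref{t4} and \eqref{t5}; the structural lemmas on stems from \S\ref{sect:stems} (particularly Lemma~\ref{lem:iains}, which describes $x\vee y$ in terms of heights and stems) should control exactly which case arises and thereby match the reduced form of $S_x^*S_y$ against the right-hand side of \eqref{altNica}. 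Managing the bookkeeping of the normal-form exponents under these rewrites, rather than any single identity, is where the real difficulty lies.
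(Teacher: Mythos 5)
Your forward direction contains a genuine error in the derivation of \eqref{t5}. The quantity $V^*U^jV$ is $\pi(T_a^*T_{b^j}T_a)=\pi(T_a^*T_{b^ja})$, not $\pi(T_b^{*j}T_a)$, and your claim that $b^j\vee a=\infty$ is false: $b^da=ab^c$ is a common upper bound of $b^j$ and $a$ for every $1\leq j\leq d$, so $b^j\vee a<\infty$. Indeed, if $T_b^{*j}T_a$ were zero then in particular $T_b^*T_a=0$, contradicting your own (correct) derivation of \eqref{t4}, which gives $T_b^*T_a=T_{b^{d-1}a}T_{b^c}^*\neq 0$. The correct argument applies Nica covariance to the pair $(a,b^ja)$: any upper bound of $a$ has stem beginning with $a$, while any upper bound of $b^ja$ with $1\leq j<d$ has stem beginning with $b^ja$, so $a\vee b^ja=\infty$ and hence $T_a^*T_{b^ja}=0$.

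For the converse, your outline points in the right general direction (define $S$ via normal forms, check multiplicativity from \eqref{t1}, verify Nica covariance by induction using Lemma~\ref{lem:iains}), and the multiplicativity step is essentially the paper's argument: products are put in normal form by pulling factors $b^{kd}$ across each $a$ via $b^da=ab^c$, and \eqref{t1} performs the identical moves on the operators. But the substantive step---deriving Nica covariance of $S$ from the three relations alone---is deferred rather than carried out: saying that the stem lemmas ``should control exactly which case arises'' is precisely where the proof lives, and no mechanism is given. What is missing is the actual reduction scheme: (i) when $x\vee y=\infty$ and $\theta(x)\leq\theta(y)$, one shows $\stem(x)$ is not a prefix of $\stem(y)$ and uses that $\{S_\sigma:\sigma\in\Sigma_{\theta(x)}\}$ is a Toeplitz--Cuntz family (a consequence of \eqref{t5}, as in Remark~\ref{r1}) to conclude $S_x^*S_y=0$; (ii) when $x\vee y<\infty$, one cancels the common stem $\sigma=\stem(x)$, using left-invariance of the order, to reduce to the case $x=b^s$; (iii) one proves the base case $(b^s,ab^t)$ by explicit computation with the iterated form $U^{*r}U^{d-1}VU^{*c}=U^{*(r+1)}V$ of \eqref{t4}; and (iv) one bootstraps from $(b^s,y)$ to $(b^s,yab^t)$ one syllable at a time, again invoking left-invariance to identify the resulting least upper bounds. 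Your proposed induction on $\theta(x)+\theta(y)$ with unspecified ``bookkeeping'' does not identify these reductions, so as written the proposal does not establish the converse.
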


Let $\pi:C^*(G,P)\to B$ be a homomorphism. Then $\pi\circ T$ is a Nica covariant representation of $(G,P)$.
The relation \eqref{t1} follows because $ab^c=b^da$ in $P$. The relation \eqref{t4} follows from Nica covariance for the pair $(b,a)$, which has $b\vee a=ab^c$, so that
\[
U^*V=\pi(T_b^*T_a)=\pi(T_{b^{-1}(b\vee a)}T_{a^{-1}(b\vee a)}^*)=\pi(T_{b^{d-1}a})\pi(T_{b^c})^*=U^{d-1}VU^{*c}.
\]
The relation \eqref{t5} is Nica covariance for $(a,b^ja)$, for which we have $a\vee b^ja=\infty$. So it remains for us to prove the converse.

\begin{rmk}\label{r1} The relation \eqref{t5} is equivalent to saying that $\{U^jV:0\leq j<d\}$ is a Toeplitz-Cuntz family: in other words, the $U^jV$ are isometries satisfying
\[
1\geq \sum_{j=0}^{d-1} (U^jV)(U^jV)^*.
\]
For $k\geq 1$, the stems of height $k$ are precisely the words of length $k$ in the alphabet $\{b^ja:0\leq j<d\}$, and for $\sigma=b^{j_0}ab^{j_1}a \dots b^{j_{k-1}}a$ we have
\[
\pi(T_\sigma)=(U^{j_0}V)(U^{j_1}V) \dots (U^{j_{k-1}}V).
\]
Thus $\{\pi(T_\sigma):\sigma\in \Sigma_k\}$ is also a Toeplitz-Cuntz family for each $k\geq 1$.
\end{rmk}

\begin{rmk}\label{r2}
Suppose that $U,V$ satisfy relations \eqref{t1} and \eqref{t5} of Proposition~\ref{defrel} and $U$ is unitary. Then multiplying \eqref{t1} on the left by $U^*$ and the right by $U^{*c}$ gives \eqref{t4}. (This argument  uses the extra relation $UU^*=1$, so it does not work when $U$ is just an isometry.) Our relation \eqref{t1} is relation (3) in \cite[Theorem~3.23]{SBS}. In \cite[Remark~3.24]{SBS}, Spielberg suggests that (3) and the Toeplitz-Cuntz relation equivalent to \eqref{t5} give a presentation of his Toeplitz algebra $\TT(G,P)$. However, we think that his $\TT(G,P)$ is intended to be $C^*(G,P)$, and that the extra relation \eqref{t4} is required for that. 
\end{rmk}

The hard bit in Proposition~\ref{defrel} is proving that a pair $(U,V)$ of isometries satisfying the relations gives us a Nica-covariant isometric representation $S$ of $P$ in $B$. It is clear how to define $S$: write $x\in P$ in  normal form $b^{s_0}ab^{s_1}a\cdots ab^{s_m}$, and define
\[
S_x:=U^{s_0}VU^{s_1}V\cdots V U^{s_m};
\]
we also set $S_e:=1$.
For $x,y\in P$, the product of normal forms is not necessarily a normal form, so to see that $S$ is multiplicative, we need to put the product
\[
xy=(b^{s_0}ab^{s_1}a\cdots ab^{s_m})(b^{t_0}ab^{t_1}a\cdots ab^{t_n})
\]
in normal form. However, this entails pulling any factors of the form $b^{kd}$ in $b^{s_m}b^{t_0}$ to the right, using the relation $b^da=ab^c$ to pull any such factors across each $a$ in turn. We can perform exactly the same calculations in 
\[
S_xS_y=U^{s_0}VU^{s_1}V\cdots V U^{s_m}U^{t_0}VU^{t_1}V\cdots V U^{t_n}
\]
using the relation \eqref{t1}, arriving at the formula for $S_{xy}$. So $S$ is multiplicative. 

To see that $S$ is Nica covariant, we begin with a special case. To avoid losing detail in subscripts, we say that a pair $(x,y)$ in $P$ is \emph{Nica covariant} when $S_x$, $S_y$ satisfy the Nica covariance relation \eqref{altNica}.

\begin{lemma}\label{height1Nica}
For every $s,t\in \N$, the pair $(b^s,ab^t)$ is Nica covariant.
\end{lemma}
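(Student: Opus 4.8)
The plan is to unwind the Nica covariance relation \eqref{altNica} for $x=b^s$ and $y=ab^t$ by direct computation, since here $S_{b^s}=U^s$ and $S_{ab^t}=VU^t$, so everything reduces to an identity about the operator $U^{*s}VU^t$. First I would pin down the join. Set $n:=\lceil s/d\rceil$, so that $0\le nd-s<d$, and use the defining relation in the form $b^{-nd}a=ab^{-nc}$. Then $z:=ab^{\max(t,nc)}$ is a common upper bound of $b^s$ and $ab^t$: indeed $b^{-s}z=b^{nd-s}ab^{\max(0,\,t-nc)}$ and $(ab^t)^{-1}z=b^{\max(0,\,nc-t)}$, and both lie in $P$. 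Hence the join is finite. Because $\theta(ab^t)=1>0=\theta(b^s)$, Lemma~\ref{lem:iains}(\ref{it:iains1}) says the join has the form $ab^{t+t'}$, and the $P$-membership condition for $ab^{t+t'}\ge b^s$ forces the least admissible $t'$ to be $\max(0,nc-t)$. Thus $b^s\vee ab^t=ab^{\max(t,nc)}$ is always finite, so the vanishing case of \eqref{altNica} never occurs here.

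With the join identified, I can read off $(b^s)^{-1}(b^s\vee ab^t)=b^{nd-s}ab^{\max(0,t-nc)}$ and $(ab^t)^{-1}(b^s\vee ab^t)=b^{\max(0,nc-t)}$, so the right-hand side of \eqref{altNica} is $U^{nd-s}VU^{\max(0,t-nc)}U^{*\max(0,nc-t)}$. The lemma therefore reduces to the single operator identity
\[
U^{*s}VU^t=U^{nd-s}VU^{\max(0,t-nc)}U^{*\max(0,nc-t)}.
\]

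The heart of the argument is the auxiliary identity $U^{*s}V=U^{nd-s}VU^{*nc}$, which I would prove by induction on $s$ using only \eqref{t4} and the isometry relation $U^*U=1$. The base case $s=0$ is trivial; in the inductive step I split according to whether $s$ is a multiple of $d$: when it is, I first rewrite $U^{*s}V=VU^{*(s/d)c}$ and then apply \eqref{t4} to $U^*V$, which creates the fresh factor $U^{d-1}$; when it is not, $nd-s\ge 1$ and I simply cancel one $U^*$ against $U^{nd-s}$. Multiplying this identity on the right by $U^t$ and collapsing $U^{*nc}U^t$ to $U^{\max(0,t-nc)}U^{*\max(0,nc-t)}$ via $U^*U=1$ produces exactly the right-hand side above. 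I expect the main obstacle to be bookkeeping rather than any conceptual difficulty: one must keep the ceiling $n=\lceil s/d\rceil$ and the two $\max$-expressions synchronised between the group-theoretic join computation and the operator induction, and in particular ensure that the sign split $t\ge nc$ versus $t<nc$ matches on both sides.
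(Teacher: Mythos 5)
Your proof is correct and takes essentially the same route as the paper: both arguments identify the join as $b^s\vee ab^t=ab^{\max(nc,t)}$ (with $n=\lceil s/d\rceil$, so $nd-s=d-j$ in the paper's notation) and then reduce Nica covariance to the single operator identity $U^{*s}V=U^{nd-s}VU^{*nc}$ followed by collapsing $U^{*nc}U^t$. The only difference is presentational: the paper derives that identity by iterating the consequence $U^{*r}U^{d-1}VU^{*c}=U^{*(r+1)}V$ of \eqref{t4} (first $n-1$ times with $r=d-1$, then once with $r=j-1$), whereas you run an induction on $s$ with a case split on $d\mid s$, and you give a more explicit justification (via Lemma~\ref{lem:iains}) that the stated element really is the least upper bound, which the paper simply asserts.
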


\begin{proof}
Write $x=b^s$ and $y=ab^t$, and write $s=(n-1)d+j$ with $1\leq j\leq d$. Then we have
\[
x\vee y=
\begin{cases}ab^{nc}=xb^{d-j}a=yb^{nc-t}&\text{if $nc>t$}\\
y=xb^{d-j}ab^{t-nc}&\text{if $nc\leq t$,}
\end{cases}
\]
and 
\[
S_{x^{-1}(x \vee y)}S_{y^{-1}(x \vee y)}^*
=\begin{cases}U^{d-j}VU^{*(nc-t)}&\text{if $nc>t$}\\
U^{d-j}VU^{t-nc}&\text{if $nc\leq t$.}
\end{cases}
\]
Next we observe that \eqref{t4} implies
\begin{equation}\label{t4'}
U^{*r}U^{d-1}VU^{*c}=U^{*(r+1)}V\quad\text{for every integer $r\geq 0$.}
\end{equation}
Using this, we compute
\begin{align*}
S_x^*S_y&=U^{*((n-1)d+j)}VU^t\\
&=U^{*j}U^{*(n-1)d}VU^t\\
&=U^{*j}VU^{*(n-1)c}U^t\quad\text{by \eqref{t4'} with $r=d-1$, $n-1$ times}\\
&=(U^{d-j}VU^{*c})U^{*(n-1)c}U^t\quad\text{by \eqref{t4'} with $r=j-1$,}
\end{align*}
which is $U^{d-j}VU^{*(nc-t)}$ if $nc>t$ and $U^{d-j}VU^{t-nc}$ if $nc\leq t$.
\end{proof}

The next lemma will allow us to bootstrap Lemma~\ref{height1Nica} to longer words.

\begin{lemma}\label{height2Nica}
Suppose that $(x,y)$ is a Nica-covariant pair with $x\vee y<\infty$ and $\theta(x)\leq \theta(y)$. If $w$ has the form $ab^t$, then $(x,yw)$ is a Nica-covariant pair.
\end{lemma}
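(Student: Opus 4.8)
The plan is to bootstrap from the height-one case in Lemma~\ref{height1Nica} using multiplicativity of $S$ together with the hypothesis that $(x,y)$ is already Nica covariant. Set $z:=x\vee y$, which is finite by assumption, and write $u:=y^{-1}z$ and $v:=x^{-1}z$, both of which lie in $P$; then the Nica covariance of $(x,y)$ says exactly that $S_x^*S_y=S_vS_u^*$. The first thing I would check is that, because $\theta(x)\leq\theta(y)$, Lemma~\ref{lem:iains} forces $u=y^{-1}(x\vee y)$ to be a power of $b$, say $u=b^n$: when $\theta(y)>\theta(x)$ this is part~\eqref{it:iains1}, and when $\theta(x)=\theta(y)$ it follows from part~\eqref{it:iains2} (with $u=e=b^0$ in the subcase $x\vee y=y$). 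This is the step that uses the hypothesis $\theta(x)\leq\theta(y)$, and it is precisely what makes Lemma~\ref{height1Nica} available.

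Next I would compute, using $S_{yw}=S_yS_w$ and the relation above,
\[
S_x^*S_{yw}=S_x^*S_yS_w=S_vS_u^*S_w=S_v\bigl(S_{b^n}^*S_{ab^t}\bigr).
\]
Now $(b^n,ab^t)$ is a Nica-covariant pair by Lemma~\ref{height1Nica}, and $b^n\vee ab^t$ is finite there, so writing $p:=b^n\vee ab^t=u\vee w$ gives $S_{b^n}^*S_{ab^t}=S_{u^{-1}p}S_{w^{-1}p}^*$. Substituting and applying multiplicativity once more (both $v$ and $u^{-1}p$ lie in $P$) yields
\[
S_x^*S_{yw}=S_{v u^{-1}p}\,S_{w^{-1}p}^*.
\]

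It then remains to recognise the right-hand side as the Nica-covariance relation \eqref{altNica} for $(x,yw)$, i.e.\ to show that $x\vee yw=yp$, with $x^{-1}(yp)=vu^{-1}p$ and $(yw)^{-1}(yp)=w^{-1}p$. The two coset identities are formal: from $z=xv=yu$ one reads off $x^{-1}y=vu^{-1}$, so $x^{-1}(yp)=vu^{-1}p$ and $(yw)^{-1}(yp)=w^{-1}y^{-1}yp=w^{-1}p$. Moreover $yp=x(vu^{-1}p)$ exhibits $yp$ as a common upper bound of $x$ and $yw$ (note $vu^{-1}p=v(u^{-1}p)$ is genuinely in $P$, and $p\geq w$), so $x\vee yw<\infty$ and the vanishing alternative in \eqref{altNica} cannot occur.

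The main obstacle is verifying that $yp$ is the \emph{least} upper bound rather than merely an upper bound. I would argue this directly: let $m\in P$ be any common upper bound of $x$ and $yw$. Since $m\geq yw\geq y$ and $m\geq x$, left-invariance gives $m\geq x\vee y=z=yu$, hence $y^{-1}m\geq u$; and $m\geq yw$ gives $y^{-1}m\geq w$. Therefore $y^{-1}m\geq u\vee w=p$, so $m\geq yp$, as required. This minimality step, together with the clerical work of collapsing the two products into the single spanning form $S_{vu^{-1}p}S_{w^{-1}p}^*$, is the only part that is not purely formal.
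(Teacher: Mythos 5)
Your proof is correct and follows essentially the same route as the paper's: decompose $S_x^*S_{yw}=(S_x^*S_y)S_w$, use Lemma~\ref{lem:iains} and the hypothesis $\theta(x)\leq\theta(y)$ to see that $y^{-1}(x\vee y)$ is a power of $b$, apply Lemma~\ref{height1Nica} to that power of $b$ against $w=ab^t$, and recombine by multiplicativity. The only difference is in the last step: where you verify by hand that $yp$ is the \emph{least} common upper bound of $x$ and $yw$, the paper obtains the same identity $x\vee yw=y\bigl(y^{-1}(x\vee y)\vee w\bigr)$ formally from left-invariance of the partial order (joins commute with left translation), which is exactly the fact your minimality argument proves.
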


\begin{proof}
We have
\[
S_x^*S_{yw}=(S_x^*S_{y})S_w=(S_{x^{-1}(x \vee y)}S_{y^{-1}(x \vee y)}^*)S_{w}.
\]
The assumption $\theta(x)\leq \theta(y)$ implies that $x\vee y=$ has the form $yb^s$ (see Lemma~\ref{lem:iains}), and hence Lemma~\ref{height1Nica} implies that $(y^{-1}(x\vee y),w)=(b^s,w)$ is Nica covariant. Thus
\begin{align*}
S_x^*S_{yw}&=S_{x^{-1}(x \vee y)}(S_{y^{-1}(x \vee y)}^*S_{w})\\
&=S_{x^{-1}(x \vee y)}\big(S_{(y^{-1}(x\vee y))^{-1}(y^{-1}(x\vee y)\vee w)}S^*_{w^{-1}(y^{-1}(x\vee y)\vee w)}\big)\\
&=S_{x^{-1}y(y^{-1}(x\vee y)\vee w)}S^*_{w^{-1}(y^{-1}(x\vee y)\vee w)}.
\end{align*}
Now we recall that the partial order on $(G,P)$ is left invariant, and hence
\begin{align*}
&y(y^{-1}(x\vee y)\vee w)=(yy^{-1}(x\vee y))\vee yw=(x\vee y)\vee yw=x\vee yw\\
\intertext{and}
&y^{-1}(x\vee y)\vee w=y^{-1}(x\vee y)\vee y^{-1}yw=y^{-1}((x\vee y)\vee yw)=y^{-1}(x\vee yw).
\end{align*}
Thus
\[
S_x^*S_{yw}=S_{x^{-1}(x\vee yw)}S^*_{w^{-1}y^{-1}(x\vee yw)}=S_{x^{-1}(x\vee yw)}S^*_{(yw)^{-1}(x\vee yw)},
\]
as required.
\end{proof}

\begin{proof}[Proof of Proposition~\ref{defrel}] It remains for us to prove  that the representation $S$ is Nica covariant. Suppose that $x,y\in P$. It suffices to prove \eqref{altNica} when $\theta(x)\leq \theta(y)$ (otherwise take adjoints). First we suppose that $x\vee y=\infty$. We claim that $\stem(x)$ is not an initial segment of $\stem(y)$. To see this, suppose to the contrary that $\stem(y)=\stem(x)p$ and $x=\stem(x)b^t$. Then Lemma~\ref{maponstems}\eqref{hkm} implies that there is a stem $q$ such that $b^tq$ has the form $pb^s$. But them $xq=\stem(x)pb^s$ and $y$ has the same stem, and we can find a common upper bound for $x$ and $y$ of the form $\stem(x)pb^r$. Thus we have a contradiction, and the claim is proved. So there are distinct stems $\sigma, \tau$ in $\Sigma_{\theta(x)}$ such that $x$ has the form $x=\sigma b^s$ and $y=\tau p$. Then because $\{S_\rho=\pi(T_\sigma):\rho\in\Sigma_{\theta(x)}\}$ 
is a Toeplitz-Cuntz family (Remark~\ref{r1}), we have
\[
S_x^*S_y=S_{b^s}^*S_{\sigma}^*S_\tau S_p=0,
\]
as required in \eqref{altNica}. 

Next we suppose that $x\vee y<\infty$, in which case we have $x=\sigma b^s$ for $\sigma=\stem(x)$, and $y$ has the form $\sigma w$ for some $w\in P$ by uniqueness of the normal form. Then
\[
S_x^*S_y=S_{b^s}^*S_{\sigma}^*S_\sigma S_w=S_{b^s}^*S_w;
\]
since left invariance of the partial order gives \[x^{-1}(x\vee y)=b^{-s}(b^s\vee w)\text{\ and\ }y^{-1}(x\vee y)=w^{-1}(b^s\vee w),\] it suffices to prove the result for $x=b^s$. Now we trivially have Nica covariance for $(b^s,b^r)$, and Lemma~\ref{height2Nica} gives Nica covariance for $(b^s,b^rab^t)$. Now an induction argument using Lemma~\ref{height2Nica} gives Nica covariance of $(b^r,w)$ for all $w$. Thus $S$ is Nica covariant.

The universal property of $(C^*(G,P),T)$ now gives us the homomorphism $\pi_{U,V}:=\pi_S:C^*(G,P)\to B$ with the required properties.
\end{proof}

\section{A characterisation of KMS states}\label{sect:char}

The height map $\theta$ gives  a strongly continuous gauge action $\gamma:\T\to \Aut C^*(G,P)$ such that $\gamma_z(T_x)= z^{\theta(x)}T_x$. We then define $\alpha:\R\to \Aut C^*(G,P)$ by $\alpha_t= \gamma_{e^{i t}}$, and aim to study the KMS states of the dynamical system $(C^*(G,P), \alpha)$.  
For $x,y\in P$  we have $\alpha_t(T_xT_y^*) = e^{i t(\theta(x) - \theta(y))}T_xT_y^*$, and thus each $T_xT_y^*$ is analytic, with 
$
\alpha_z(T_xT_y^*)=e^{i z(\theta(x) - \theta(y))}T_xT_y^*
$.
Since the $T_xT_y^*$ span a dense subspace of $C^*(G,P)$, it follows from \cite[Proposition~8.12.3]{P} that a state $\psi$ of $C^*(G,P)$ is a KMS$_\beta$ state of $(C^*(G,P),\alpha)$ for some $\beta\in \R\setminus\{0\}$ if and only if
\begin{equation}\label{eq:KMScondition}
\psi((T_xT_y^*)(T_pT_q^*))=\psi((T_pT_q^*)\alpha_{i\beta}(T_xT_y^*))=e^{-\beta(\theta(x)-\theta(y))}\psi((T_pT_q^*)(T_xT_y^*))
\end{equation}
for all $x,y,p,q\in P$.

\begin{prop}\label{charKMS}
Let $\psi$ be a state on $(C^*(G,P),\alpha)$.  Then $\psi$ is a KMS$_\beta$ state if and only if for all $x,y\in P$ we have
\begin{equation}
 \label{eq:KMS}
\psi(T_xT_y^*) = \begin{cases}
                  e^{-\beta \theta(x)}\psi(T_{y^{-1}x}) & \text{if  $\theta(x)=\theta(y)$ and  $x \vee y = x$}\\
		  e^{-\beta \theta(x)}\psi(T_{x^{-1}y}^*) & \text{if  $\theta(x)=\theta(y)$ and  $x \vee y = y$}\\
		  0 & \text{otherwise.}
                 \end{cases}
\end{equation}
\end{prop}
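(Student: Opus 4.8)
The plan is to read \eqref{eq:KMScondition} as the working definition of a KMS$_\beta$ state and to match it against \eqref{eq:KMS} by using the Nica-covariance formula \eqref{altNica} to collapse all products of spanning elements.

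For the forward implication, suppose $\psi$ is KMS$_\beta$. Taking $p=q=e$ in \eqref{eq:KMScondition} gives $\psi(T_xT_y^*)=e^{-\beta(\theta(x)-\theta(y))}\psi(T_xT_y^*)$, so (as $\beta\neq0$) $\psi(T_xT_y^*)=0$ whenever $\theta(x)\neq\theta(y)$; this is the height restriction built into every case of \eqref{eq:KMS}. Taking instead $(x,y,p,q)\mapsto(x,e,e,y)$, that is, pairing the spanning elements $T_x=T_xT_e^*$ and $T_y^*=T_eT_y^*$, gives
\[
\psi(T_xT_y^*)=e^{-\beta\theta(x)}\psi(T_y^*T_x).
\]
Now I would evaluate $T_y^*T_x$ by \eqref{altNica}: it vanishes when $x\vee y=\infty$ (the ``otherwise'' case), and when $\theta(x)=\theta(y)$ and $x\vee y<\infty$ Lemma~\ref{lem:iains}\eqref{it:iains2} gives either $x\vee y=x$, whence $T_y^*T_x=T_{y^{-1}x}$ and we get the first line of \eqref{eq:KMS}, or $x\vee y=y$, whence $T_y^*T_x=T_{x^{-1}y}^*$ and we get the second. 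Running this computation in reverse shows that, conversely, \eqref{eq:KMS} already forces the single-element commutation
\[
\psi(T_uT_v^*)=e^{-\beta\theta(u)}\psi(T_v^*T_u)\qquad(u,v\in P),
\]
and this is the tool I will use to establish the full relation \eqref{eq:KMScondition}.

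For the converse, assume \eqref{eq:KMS}, fix $x,y,p,q\in P$, and put $A=T_xT_y^*$, $B=T_pT_q^*$. Reducing $T_y^*T_p$ and $T_q^*T_x$ with \eqref{altNica} gives $AB=T_XT_Y^*$ with $X=xy^{-1}(y\vee p)$ and $Y=qp^{-1}(y\vee p)$ when $y\vee p<\infty$ (and $AB=0$ otherwise), and $BA=T_{X'}T_{Y'}^*$ with $X'=pq^{-1}(q\vee x)$ and $Y'=yx^{-1}(q\vee x)$ when $q\vee x<\infty$ (and $BA=0$ otherwise). The degenerate cases are automatic: if $y\vee p=\infty$ then $T_y^*T_p=0$, so $AB=0$, and $\psi(BA)=0$ because either $q\vee x=\infty$ or else $\psi(BA)=e^{-\beta\theta(X')}\psi(T_{Y'}^*T_{X'})$ with $T_{Y'}^*T_{X'}=T_{x^{-1}(q\vee x)}^*(T_y^*T_p)T_{q^{-1}(q\vee x)}=0$; the case $q\vee x=\infty$ is symmetric.

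When both joins are finite, the single-element commutation turns the target into
\[
e^{-\beta\theta(X)}\psi(T_Y^*T_X)=e^{-\beta(\theta(x)-\theta(y))}e^{-\beta\theta(X')}\psi(T_{Y'}^*T_{X'}).
\]
A height count using $\theta(u\vee v)=\max(\theta(u),\theta(v))$ (immediate from Lemma~\ref{lem:iains}) shows that $\theta(X)=\theta(Y)$ and $\theta(X')=\theta(Y')$ hold precisely when $\theta(x)+\theta(p)=\theta(y)+\theta(q)$; if this fails, \eqref{eq:KMS} forces $\psi(T_Y^*T_X)=\psi(T_{Y'}^*T_{X'})=0$, and if it holds a short computation gives $\theta(X)-\theta(X')=\theta(x)-\theta(y)$, cancelling the exponential prefactors. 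It then remains only to prove the operator identity $T_Y^*T_X=T_{Y'}^*T_{X'}$.

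This identity is the main obstacle. Reducing each side with \eqref{altNica} shows it is equivalent to the group identity $Y^{-1}X=Y'^{-1}X'$, that is, to the equality of the two cyclically rotated words $T_\nu^*T_{\mu'}T_{\nu'}^*T_\mu$ and $T_{\nu'}^*T_\mu T_\nu^*T_{\mu'}$, where $\mu=y^{-1}(y\vee p)$, $\nu=p^{-1}(y\vee p)$, $\mu'=q^{-1}(q\vee x)$ and $\nu'=x^{-1}(q\vee x)$. I would prove $Y^{-1}X=Y'^{-1}X'$ directly in $G$, using left-invariance of the order (so that the join is left-equivariant, $g(u\vee v)=(gu)\vee(gv)$) together with uniqueness of the normal form; once $Y^{-1}X$ is known to be a power of $b$, Lemma~\ref{lem:iains}\eqref{it:iains2} identifies the common stem of $X$ and $Y$ and pins down the exponent on both sides simultaneously. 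This is where the combinatorics of stems does the real work, and it is the step I expect to occupy most of the proof.
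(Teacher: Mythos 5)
Your forward direction is correct and coincides with the paper's, and the first half of your converse is sound: the single-element commutation $\psi(T_uT_v^*)=e^{-\beta\theta(u)}\psi(T_v^*T_u)$ does follow from \eqref{eq:KMS}, the Nica-covariant collapse of $AB$ and $BA$ to $T_XT_Y^*$ and $T_{X'}T_{Y'}^*$ is right, the degenerate (infinite-join) cases are handled correctly, and your height bookkeeping checks out. But the proof then stops exactly at the step that constitutes the entire content of the converse: you do not prove the identity $T_Y^*T_X=T_{Y'}^*T_{X'}$, you only announce that you ``would prove'' the group identity $Y^{-1}X=Y'^{-1}X'$ directly in $G$. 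Moreover, the target you set yourself is the wrong one, in two ways. First, the operator identity is not equivalent to the group identity: when $X\vee Y=\infty$ and $X'\vee Y'=\infty$ both operators vanish while the group elements need not coincide, so what you actually need is the dichotomy that the joins $X\vee Y$ and $X'\vee Y'$ are finite or infinite simultaneously, together with equality of the resulting powers of $b$ in the finite case. Second, and more seriously, the unrestricted group identity is not something one should expect to prove for arbitrary balanced quadruples: writing $w=xy^{-1}pq^{-1}$, one computes $Y^{-1}X=Y^{-1}wY$ with $Y=qp^{-1}(y\vee p)$, while $Y'^{-1}X'=(q\vee x)^{-1}w(q\vee x)$, so the two sides are conjugates of the same height-zero element by two \emph{different} group elements, and in $\BS(c,d)$ conjugation genuinely moves powers of $b$ around (for instance $ab^ca^{-1}=b^d$). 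There is no evident reason for two such conjugates to agree in general, and nothing in ``left-invariance plus uniqueness of normal forms'' singles out why they should.

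What makes the equality provable in the paper is an extra hypothesis that your plan discards: the paper first assumes $\psi(T_xT_y^*T_pT_q^*)\neq 0$, and \eqref{eq:KMS} together with Lemma~\ref{lem:iains}\eqref{it:iains2} then forces $\theta(X)=\theta(Y)$, $X\vee Y\in\{X,Y\}$, hence $X=Yb^n$ for some $n\in\Z$; this yields the relation \eqref{eq-x}, $x=qp^{-1}yb^n$, and it is precisely this relation that drives the two-case analysis of $x\vee q$ (either $x\vee q=xb^s$ or $x\vee q=qb^s$, by Lemma~\ref{lem:iains}) proving $X'=Y'b^n$ with the \emph{same} $n$; the zero cases are then recovered by the symmetry of the roles of $AB$ and $BA$. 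In other words, the hypothesis ``$Y^{-1}X$ is a power of $b$'' is not a convenience but the pivot on which the whole combinatorial argument turns, and it enters only through the conditioning on non-vanishing of $\psi$. Your outline, which attempts the group identity with no such control on $Y^{-1}X$, would leave you trying to compare two conjugates with nothing to grab onto; so as it stands the proposal has a genuine gap at its decisive step, and the announced strategy for closing it would need to be replaced by (essentially) the paper's conditioning-and-case-analysis argument.
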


\begin{proof}
 Suppose $\psi$ is a KMS$_\beta$ state on $(C^*(G,P),\alpha)$ and fix $x,y\in P$. Nica covariance of $T$ gives
\[
T_y^*T_x =  
\begin{cases}
T_{y^{-1}(y\vee x)}T^*_{x^{-1}(y\vee x)} &\text{if $x\vee y<\infty$}\\
0&\text{if $x\vee y=\infty$.}
\end{cases}\]
The KMS condition says
\begin{equation*}
 \psi(T_xT_y^*) =  e^{-\beta \theta(x)} \psi(T_y^*T_x), 
\end{equation*}
and hence $ \psi(T_xT_y^*)= 0$ unless $x\vee y < \infty$.
Applying the KMS condition again gives
\[
  \psi(T_xT_y^*) = e^{-\beta (\theta(x)-\theta(y))}\psi(T_xT_y^*),
\] 
and hence also $\psi(T_xT_y^*) =0$ unless  $\theta(x) = \theta(y)$. 

Now suppose that $\theta(x) = \theta(y)$ and $x\vee y < \infty$.
Then 
\[
\psi(T_xT_y^*) =  e^{-\beta \theta(x)} \psi(T_{y^{-1}(x\vee y)} T_{x^{-1}(x\vee y)}^*),
\]
and we recover~(\ref{eq:KMS}) since either $x\vee y = x$ or $x\vee y = y$ by Lemma~\ref{lem:iains}(\ref{it:iains2}).

Conversely, suppose $\psi$ is a state satisfying \eqref{eq:KMS}. We fix $x,y,p,q\in P$ and aim to show the KMS condition \eqref{eq:KMScondition} holds.   We will show that if $\psi(T_xT_y^*T_pT_q^*)\neq 0$, then $\psi (T_pT_q^*T_xT_y^*)\neq 0$ also and the KMS condition holds. Then, by symmetry, $\psi(T_xT_y^*T_pT_q^*)\neq 0$ if and only if $\psi (T_pT_q^*T_xT_y^*)\neq 0$, and so if $\psi(T_xT_y^*T_pT_q^*)=0$ then the KMS condition holds with both sides zero. So we  assume that $\psi(T_xT_y^*T_pT_q^*) \neq 0$.

By Nica covariance $y\vee p<\infty$ and  
\begin{equation}\label{eq-usedcovariance}
0\neq \psi(T_xT_y^*T_pT_q^*)= \psi(T_{xy^{-1}(y \vee p)} T^*_{qp^{-1}(y \vee p)}).
\end{equation}
We now argue that it suffices to show the KMS condition when  $\theta(y)\geq \theta(p)$ and $y\vee p=yb^m$ for some $m\in\N$.  If $\theta(y)> \theta(p)$, then there exists $m\in\N$  such that $y\vee p=yb^m$ by Lemma~\ref{lem:iains}\eqref{it:iains1}.  If $\theta(y)=\theta(p)$, then there exists $m\in\N$  such that  $y\vee p=yb^m$   by Lemma~\ref{lem:iains}\eqref{it:iains2} ($m=0$ is allowed). If $\theta(y)< \theta(p)$, then we take the adjoint of $T_xT_y^*T_pT_q^*$ and use that $\psi(a^*)=\overline{\psi(a)}$.
So we assume that $\theta(y)\geq \theta(p)$ and $y\vee p=yb^m$ for some $m\in\N$.

Set \[M:= xy^{-1}(y \vee p)=xy^{-1}yb^m=xb^m\quad\text{and}\quad N:= qp^{-1}(y \vee p)=qp^{-1}yb^m.\]
Then \eqref{eq-usedcovariance} and the equation for $\psi$  at \eqref{eq:KMS} implies that   $\theta(M)=\theta(N)$, and either $M\vee N=M$ or $M\vee N=N$. Thus 
\[
\theta(x)-\theta(y)=\theta(q)-\theta(p),
\]
and then $\theta(x)\geq \theta(q)$. By Lemma~\ref{lem:iains}\eqref{it:iains2}  there exists $n \in \Z$ such that $M = Nb^n$. For future use we note here that $M = Nb^n$ implies
\begin{equation}\label{eq-x}
x=qp^{-1}yb^n.
\end{equation}
Using \eqref{eq:KMS} we have
\begin{equation} \label{eq:tobereconcile}
\psi(T_xT_y^*T_pT_q^*)= \psi(T_{M}T_N^*) 
=\begin{cases}
e^{-\beta \theta(N)} \psi(T_{b^n}) &\text{if $n\geq 0$}\\
e^{-\beta \theta(N)} \psi(T_{b^{-n}}^*) &\text{if $n<0$.}
\end{cases} 
\end{equation}

Next we consider $\psi(T_pT_q^*T_xT_y^*)$. Since $x\leq M$ and $q\leq N$ we have $x\vee q\leq M\vee N<\infty$, and $\psi(T_pT_q^*T_xT_y^*)=\psi(T_{pq^{-1}(x\vee q)}T_{yx^{-1}(x\vee q)}^*)$.  By Lemma~\ref{lem:iains}, there exists $s\in\N$ such that either $x\vee q=xb^s$ or $x\vee q=qb^s$.
First, suppose that $x\vee q=xb^s$. Then 
\begin{gather*}
yx^{-1}(x\vee q)=yb^s\text{\ and}\\
pq^{-1}(x\vee q)=pq^{-1}xb^s=yb^nb^s= yx^{-1}(x\vee q)b^n
\end{gather*}
using \eqref{eq-x}.
Second, suppose that $x\vee q=qb^s$. Since $\theta(x)\geq \theta(q)$, we have  $x=x\vee q$. Thus
\begin{gather*}
pq^{-1}(x\vee q)=pb^s\text{\ and}\\
yx^{-1}(x\vee q)=y=pq^{-1}xb^{-n}=pb^sb^{-n}=pq^{-1}(x\vee q)b^{-n}
\end{gather*}
using \eqref{eq-x}. In either case, $pq^{-1}(x\vee q)= yx^{-1}(x\vee q)b^n$, and 
\begin{align*}
\psi(T_pT_q^*T_xT_y^*) = \psi(T_{pq^{-1}(q \vee x)}T_{yx^{-1}(q \vee x)}^*)=\begin{cases}
e^{-\beta \theta(pq^{-1}(x\vee q))} \psi(T_{b^n}) &\text{if $n\geq 0$}\\
e^{-\beta \theta(pq^{-1}(x\vee q))} \psi(T_{b^{-n}}^*) &\text{if $n<0$.}
\end{cases} \end{align*}
But $\theta(pq^{-1}(x\vee q))=\theta(p)-\theta(q)+\theta(x)=\theta(y)$ and $\theta(N)=\theta(x)$. Thus
\[
e^{-\beta(\theta(x)-\theta(y))}\psi(T_pT_q^*T_xT_y^*) =\begin{cases}
e^{-\beta\theta(x)} \psi(T_{b^n}) &\text{if $n\geq 0$}\\
e^{-\beta\theta(x)} \psi(T_{b^{-n}}^*) &\text{if $n<0$}
\end{cases} 
\]
is the same as \eqref{eq:tobereconcile}, as required.
As we argued above, this suffices to show that $\psi$ is a KMS$_\beta$ state.
\end{proof}

\begin{cor}\label{btsuffices}
Suppose that $\phi$ and $\psi$ are KMS$_\beta$ states on $(C^*(G,P),\alpha)$ and $\phi(T_{b^t})=\psi(T_{b^t})$ for all $t\in \N$. Then $\phi=\psi$.
\end{cor}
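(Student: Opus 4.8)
The plan is to use the characterisation in Proposition~\ref{charKMS} to show that a KMS$_\beta$ state is completely determined by its values on the elements $T_{b^t}$, $t\in\N$, so that the hypothesis $\phi(T_{b^t})=\psi(T_{b^t})$ forces $\phi=\psi$. The point is that \eqref{eq:KMS} reduces the value of any KMS$_\beta$ state on a spanning element $T_xT_y^*$ to its value on a single generator, and that generator is always a power of $b$.

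The key observation I would establish first is exactly this last claim. In the first nonzero case of \eqref{eq:KMS} we have $\theta(x)=\theta(y)$ and $x\vee y=x$; the latter says $y\leq x$, so $y^{-1}x\in P$, while $\theta(y^{-1}x)=\theta(x)-\theta(y)=0$. Since the normal form shows that the only elements of $P$ of height $0$ are the powers $b^t$ with $t\in\N$, we get $y^{-1}x=b^t$ for some $t$. The second case is symmetric and gives $x^{-1}y=b^t$. Hence, for any KMS$_\beta$ state $\chi$, Proposition~\ref{charKMS} expresses $\chi(T_xT_y^*)$ as either $0$, or $e^{-\beta\theta(x)}\chi(T_{b^t})$, or $e^{-\beta\theta(x)}\chi(T_{b^t}^*)=e^{-\beta\theta(x)}\overline{\chi(T_{b^t})}$, the last equality using that $\chi$ is a state.

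Applying this to both $\phi$ and $\psi$, and using the hypothesis together with its conjugate $\phi(T_{b^t}^*)=\psi(T_{b^t}^*)$, I would conclude that $\phi(T_xT_y^*)=\psi(T_xT_y^*)$ for all $x,y\in P$. Since $C^*(G,P)=\clsp\{T_xT_y^*:x,y\in P\}$ and states are bounded, $\phi$ and $\psi$ then agree on a dense subspace and hence everywhere.

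The argument is essentially immediate from Proposition~\ref{charKMS}, so there is no real obstacle: the only step that needs any attention is the identification of $y^{-1}x$ (and $x^{-1}y$) as a power of $b$, which follows directly from the normal form for elements of $P$ and the fact that these elements have height zero.
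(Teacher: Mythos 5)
Your proof is correct and follows essentially the same route as the paper: both use Proposition~\ref{charKMS} to reduce the value of a KMS$_\beta$ state on any spanning element $T_xT_y^*$ to either $0$, or $e^{-\beta\theta(x)}$ times its value on some $T_{b^t}$ or the conjugate thereof, and then conclude by density of the span. The only cosmetic difference is that where the paper cites Lemma~\ref{lem:iains}(\ref{it:iains2}) to identify $y^{-1}x$ (or $x^{-1}y$) as a power of $b$, you rederive this directly from the height homomorphism and the normal form, which is equally valid.
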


\begin{proof}
Both states vanish on generators $T_xT_y^*$ unless $\theta(x)=\theta(y)$ and $x\vee y$ is $x$ or $y$, in which case Lemma~\ref{lem:iains} implies that either $y^{-1}x$ or $x^{-1}y$ has the form $b^t$. Thus $\phi(T_xT_y^*)=\psi(T_xT_y^*)$ for all $x,y\in P$, and $\phi=\psi$.
\end{proof}

\begin{cor}\label{restonbeta}
Consider the dynamical system $(C^*(G,P),\alpha)$ as above and take $\beta\in \R$. 
\begin{enumerate}
\item\label{factor1} Every KMS$_\beta$ state of $(C^*(G,P),\alpha)$ factors through the quotient by the ideal generated by $1-T_bT_b^*$.
\item\label{lowerbd} If $\beta<\ln d$, then $(C^*(G,P),\alpha)$ has no KMS$_\beta$ states.
\item\label{factor2} Let $I$ be the ideal generated by the element
\[
1-\sum_{j=0}^{d-1}T_{b^ja}T_{b^ja}^*.
\]
Then a KMS$_{\beta}$ state factors through the quotient $\OO(G,P):=C^*(G,P)/I$ if and only if $\beta=\ln d$.
 \end{enumerate}
\end{cor}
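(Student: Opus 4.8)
The three parts share one computational input, so I would set that up first. Using the characterisation \eqref{eq:KMS} from Proposition~\ref{charKMS}, I would evaluate $\psi$ on the range projections of the generating isometries. For $x=y=b^ja$ we have $\theta(b^ja)=1$ and $(b^ja)\vee(b^ja)=b^ja$, so \eqref{eq:KMS} gives $\psi(T_{b^ja}T_{b^ja}^*)=e^{-\beta}\psi(T_e)=e^{-\beta}$ for each $0\le j<d$; similarly $\psi(T_bT_b^*)=\psi(T_e)=1$ because $\theta(b)=0$ makes the exponential factor trivial. These two identities drive everything that follows.

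For part (b) I would invoke the Toeplitz--Cuntz relation of Remark~\ref{r1}, namely $\sum_{j=0}^{d-1}T_{b^ja}T_{b^ja}^*\le 1$. Applying the state $\psi$ and the computation above yields $de^{-\beta}=\sum_{j=0}^{d-1}\psi(T_{b^ja}T_{b^ja}^*)\le\psi(1)=1$, that is $\beta\ge\ln d$; contrapositively there are no KMS$_\beta$ states when $\beta<\ln d$. The borderline value $\beta=0$, which matters only when $d\ge 2$, is handled the same way: a KMS$_0$ state is a trace, so $\psi(T_{b^ja}T_{b^ja}^*)=\psi(T_{b^ja}^*T_{b^ja})=\psi(T_e)=1$, and the Toeplitz--Cuntz relation now forces $d\le 1$, a contradiction.

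Parts (a) and (c) are two instances of a single principle. In each case the relevant ideal is generated by a projection: $P_0:=1-T_bT_b^*$ for (a), and $Q:=1-\sum_{j=0}^{d-1}T_{b^ja}T_{b^ja}^*$ for (c), where $Q$ is a projection because $\{T_{b^ja}\}$ is a Toeplitz--Cuntz family (Remark~\ref{r1}) and hence has mutually orthogonal range projections. The opening computation gives $\psi(P_0)=1-\psi(T_bT_b^*)=0$ for \emph{every} KMS$_\beta$ state, and $\psi(Q)=1-de^{-\beta}$, which vanishes exactly when $\beta=\ln d$. I would then prove the principle: a KMS$_\beta$ state $\psi$ annihilates the ideal generated by a projection $P$ if and only if $\psi(P)=0$. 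The ``only if'' direction is immediate, since $P$ itself lies in the ideal. For the ``if'' direction I would pass to the GNS triple $(\mathcal H_\psi,\pi_\psi,\xi_\psi)$ and use the standard fact that the cyclic vector of a KMS state is separating for the von Neumann algebra $\pi_\psi(C^*(G,P))''$; the vector state is therefore faithful there, so $\psi(P)=0$ together with $P\ge 0$ forces $\pi_\psi(P)=0$. Then $\pi_\psi(aPb)=\pi_\psi(a)\pi_\psi(P)\pi_\psi(b)=0$ for all $a,b$, so $\pi_\psi$, and hence $\psi$, vanish on the whole ideal and $\psi$ descends to the quotient. Applying this with $P=P_0$ proves (a), and with $P=Q$ together with $\psi(Q)=1-de^{-\beta}$ proves (c).

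The main obstacle is precisely the ``if'' direction of this principle. Knowing only that $\psi(P)=0$ for a projection $P$ does \emph{not} in general force $\psi(aPb)=0$, so one cannot conclude factorisation from the value of $\psi$ on the single generator; this is exactly where the KMS hypothesis must do genuine work beyond the bookkeeping of \eqref{eq:KMS}, and I would channel it through the separating property of the GNS vector. A more hands-on alternative, avoiding modular theory, would be to verify $\psi(a^*Pa)=0$ directly for all spanning $a=T_xT_y^*$ using \eqref{eq:KMS} and Nica covariance, but this requires a case analysis of the products $T_x^*T_{b^ja}$ that I expect to be the most laborious step.
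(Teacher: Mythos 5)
Your proof is correct, and its skeleton matches the paper's: the same values $\psi(T_bT_b^*)=1$ and $\psi(T_{b^ja}T_{b^ja}^*)=e^{-\beta}$ extracted from Proposition~\ref{charKMS}, and the same Toeplitz--Cuntz inequality for part (\ref{lowerbd}). Where you genuinely diverge is in how the factorisation principle is justified. The paper does not prove it from scratch: it invokes \cite[Lemma~2.2]{aHLRS}, an elementary lemma (proved by Cauchy--Schwarz estimates combined with the KMS condition) whose hypotheses require the projection generating the ideal to be invariant under the dynamics and the algebra to be spanned by analytic elements on which each $\alpha_z$ acts by scalars; the paper checks these hypotheses for $1-T_bT_b^*$ and the spanning family $\{T_xT_y^*\}$. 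You instead prove a self-contained principle via modular theory: for finite $\beta$ the GNS cyclic vector of a KMS$_\beta$ state is separating for $\pi_\psi(C^*(G,P))''$ (see \cite[\S 5.3]{BR}), so $\psi(P)=0$ with $P$ a projection forces $\pi_\psi(P)=0$, and hence $\pi_\psi$, and $\psi$, annihilate the whole ideal. Your route buys generality and economy --- no invariance of the projection under $\alpha$ and no analyticity bookkeeping are needed, and since tracial GNS vectors are also separating it covers $\beta=0$, which the Pedersen characterisation \eqref{eq:KMScondition} technically excludes (your separate trace argument for $\beta=0$ in part (\ref{lowerbd}) shows the same care, and is a point the paper glosses over); the paper's route stays elementary and avoids von Neumann algebra machinery. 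You also correctly identify the crux, namely that $\psi(P)=0$ alone does not kill the ideal for a general state --- that is exactly the gap both your separating-vector argument and the cited lemma are designed to close.
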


\begin{proof}
For \eqref{factor1}, suppose that $\psi$ is a KMS$_\beta$ state of $(C^*(G,P),\alpha)$. Then
\[
\psi(T_bT_b^*)=\psi(T_b^*\alpha_{i\beta}(T_b))=\psi(T_b^*T_b)=\psi(1)=1.
\]
Thus $\psi(1-T_bT_b^*)=0$. The projection $1-T_bT_b^*$ is invariant for the dynamics, and the elements $T_xT_y^*$ are analytic elements such that $\alpha_z(T_xT_y^*)$ is the product of $T_xT_y^*$ by the scalar-valued function $z\mapsto e^{iz(\theta(x)-\theta(y))}$. So we apply Lemma 2.2 of \cite{aHLRS} with $P=\{1-T_bT_b^*\}$ and $\FF=\{T_xT_y^*\}$, and deduce that $\psi$ factors through a state of the quotient, as claimed.

For \eqref{lowerbd}, we again suppose that $\psi$ is a KMS$_\beta$ state of $(C^*(G,P),\alpha)$. Then since \[\{T_{b^ja}:0\leq j<d\}\] is a Toeplitz-Cuntz family, we have $1\geq\sum_{j=0}^{d-1}T_{b^ja}T_{b^ja}^*$ in $C^*(G,P)$. Thus \eqref{eq:KMS} gives
\begin{equation}\label{applyTCK}
1=\psi(1)\geq \sum_{j=0}^{d-1}\psi(T_{b^ja}T_{b^ja}^*)=\sum_{j=0}^{d-1}e^{-\beta}\psi(1)=e^{-\beta}d,
\end{equation}
which is equivalent to $\beta\geq \ln d$.

We now prove \eqref{factor2}. If $\psi$ is a KMS$_{\ln d}$ state, then $1=e^{-\beta}d$ forces equality throughout \eqref{applyTCK}, and 
\begin{equation}\label{Cuntzrel}
\psi\Big(1-\sum_{j=0}^{d-1}T_{b^ja}T_{b^ja}^*\Big)=0.
\end{equation}
Now another application of \cite[Lemma~2.2]{aHLRS} shows that $\psi$ factors through the quotient $\OO(G,P)$. Conversely, if $\psi$ is a KMS$_\beta$ state which factors through the quotient, then $\psi$ satisfies \eqref{Cuntzrel}, we have equality in \eqref{applyTCK}, and $\beta=\ln d$.
\end{proof}

\section{KMS states for large inverse temperatures}\label{sect:large}

\begin{thm}\label{KMSToe}
Suppose that $\beta>\ln d$ and $\mu$ is a probability measure on $\T$. Then there is a KMS$_\beta$ state $\psi_{\beta,\mu}$ on $(C^*(G,P),\alpha)$ such that
\begin{equation}\label{psivsmoments}
\psi_{\beta,\mu}(T_{b^t})=(1-e^{-\beta}d)\Big(\int_{\T}z^t\,d\mu(z)+\sum_{\{k\geq 1\,:d\,\mid\, c^jd^{-j}t\text{ for }0\leq j<k\}}e^{-\beta k}d^k\int_{\T}z^{c^kd^{-k}t}\,d\mu(z)\Big),
\end{equation}
where the sum is interpreted as $0$ if there are no such integers $k$. Every KMS$_\beta$ state has this form. If $d$ does not divide $c$, then the map $\mu\mapsto \psi_{\beta,\mu}$ is an affine continuous isomorphism  of the simplex $P(\T)$ of probability measures on $\T$ onto the KMS$_\beta$ simplex of $(C^*(G,P),\alpha)$.
\end{thm}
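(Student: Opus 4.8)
The plan is to produce each $\psi_{\beta,\mu}$ from a concrete representation, check the KMS property via Proposition~\ref{charKMS}, and then reconstruct $\mu$ from an arbitrary KMS state to obtain both surjectivity and injectivity. For the construction I would work on $\KK:=\bigoplus_{k\geq0}\ell^2(\Sigma_k)\otimes L^2(\T,\mu)$, with vectors $e_\sigma\otimes\eta$ for $\sigma\in\Sigma_k$. Writing $W_m$ for multiplication by $z^m$ on $L^2(\T,\mu)$ and using $b\sigma=\stem(b\sigma)b^{m(\sigma)}$, I would define isometries
\[
U(e_\sigma\otimes\eta)=e_{\stem(b\sigma)}\otimes W_{m(\sigma)}\eta,\qquad V(e_\sigma\otimes\eta)=e_{a\sigma}\otimes\eta .
\]
Here $U$ is unitary on each summand because $h_{k,1}$ is a bijection (Lemma~\ref{maponstems}\eqref{hkm}) and $W_{m(\sigma)}$ is unitary, while $V$ shifts $\Sigma_k$ into $\Sigma_{k+1}$ by prepending $b^0a$. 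The relation $b^da=ab^c$ translates directly into $VU^c=U^dV$, the fact that $b^ja\sigma$ has nonzero leading exponent for $1\le j<d$ gives \eqref{t5}, and a similar carry computation gives \eqref{t4}, so Proposition~\ref{defrel} yields $\pi:=\pi_{U,V}:C^*(G,P)\to B(\KK)$. With $\xi_\mu\in L^2(\T,\mu)$ the constant function $1$ I would set
\[
\psi_{\beta,\mu}(A):=(1-e^{-\beta}d)\sum_{k\geq0}e^{-\beta k}\sum_{\sigma\in\Sigma_k}\big\langle \pi(A)(e_\sigma\otimes\xi_\mu),\,e_\sigma\otimes\xi_\mu\big\rangle ,
\]
which, since $\beta>\ln d$ gives $\sum_k d^k e^{-\beta k}=(1-e^{-\beta}d)^{-1}$, is a convex combination of vector states and hence a state.

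Next I would verify the conditions \eqref{eq:KMS} of Proposition~\ref{charKMS}. Because $\pi(T_x)$ raises the grading by $\theta(x)$ and acts on stems by the prepend/carry rules above, the diagonal matrix element $\langle\pi(T_xT_y^*)(e_\sigma\otimes\xi_\mu),e_\sigma\otimes\xi_\mu\rangle$ vanishes unless $\theta(x)=\theta(y)$ with $\stem(x),\stem(y)$ compatible, and the ratio of the weights $e^{-\beta k}$ between consecutive gradings produces exactly the factor $e^{-\beta\theta(x)}$. This bookkeeping is where I expect the real work to lie, since one must simultaneously track the permutation of stems and the accumulated powers of $z$ fed into $L^2(\T,\mu)$. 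Specialising to $x=b^t$, $y=e$ gives \eqref{psivsmoments}: the inner product $\langle U^t(e_\sigma\otimes\xi_\mu),e_\sigma\otimes\xi_\mu\rangle$ is nonzero only when $h_{k,t}(\sigma)=\sigma$, and a short argument building on Lemma~\ref{maponstems} shows that at each level this happens either for all $\sigma\in\Sigma_k$ (precisely when $d\mid c^jd^{-j}t$ for $0\le j<k$) or for none, with common carry $c^kd^{-k}t$; summing over the $d^k$ stems at level $k$ reproduces \eqref{psivsmoments}.

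For \emph{every KMS$_\beta$ state has this form}, let $\psi$ be any KMS$_\beta$ state and put $\Delta:=1-\sum_{j=0}^{d-1}T_{b^ja}T_{b^ja}^*\ge0$. Since $\psi$ factors through the quotient making $T_b$ unitary (Corollary~\ref{restonbeta}\eqref{factor1}) and $\theta(b)=0$, the KMS condition makes $T_b$ central for $\psi$, so $t\mapsto f(t):=\psi(T_{b^t}\Delta)$ is positive definite on $\Z$ with $f(0)=1-e^{-\beta}d$; by Herglotz' theorem $f(t)=(1-e^{-\beta}d)\int_\T z^t\,d\mu(z)$ for a probability measure $\mu$. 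Using $T_b^{dt}T_a=T_aT_b^{ct}$ from \eqref{t1}, Nica covariance (which kills $T_a^*T_{b^sa}$ when $d\nmid s$), and centrality of $T_b$, I would compute $\psi\big(T_{b^t}(1-\Delta)\big)=[d\mid t]\,e^{-\beta}d\,\psi(T_{b^{ct/d}})$, i.e.\ the one-step recursion $\psi(T_{b^t})=f(t)+[d\mid t]\,e^{-\beta}d\,\psi(T_{b^{ct/d}})$. Iterating, with remainder $O((e^{-\beta}d)^k)\to0$, turns this into \eqref{psivsmoments}, so $\psi$ and $\psi_{\beta,\mu}$ agree on all $T_{b^t}$ and hence coincide by Corollary~\ref{btsuffices}.

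Finally, linearity of \eqref{psivsmoments} in $\mu$ makes $\mu\mapsto\psi_{\beta,\mu}$ affine, and continuity in both directions is immediate because the quantities involved are moments $\int_\T z^t\,d\mu$. Injectivity comes from inverting the recursion above, namely $\int_\T z^t\,d\mu=(1-e^{-\beta}d)^{-1}\big(\psi_{\beta,\mu}(T_{b^t})-[d\mid t]\,e^{-\beta}d\,\psi_{\beta,\mu}(T_{b^{ct/d}})\big)$, which recovers every Fourier coefficient of $\mu$; this inversion needs no arithmetic hypothesis on $c,d$, so it applies in particular when $d\nmid c$. Combined with the surjectivity of the previous paragraph, this yields the asserted affine homeomorphism of $P(\T)$ onto the KMS$_\beta$ simplex. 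The single hardest step is the KMS verification in the second paragraph, where the stem combinatorics of \S\ref{sect:stems} and the geometric weights $e^{-\beta k}$ must be controlled at the same time.
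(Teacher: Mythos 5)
Your proposal is correct, and it splits naturally into a part that matches the paper and a part that is genuinely different. The existence half is essentially the paper's own construction: your $U$ and $V$ are exactly the operators the paper obtains by restricting the induced representation $\Ind_K^GW(\mu)$ to $H_0$ (compare \eqref{formindWa} and \eqref{formindWb}; the paper makes the same bare-hands construction in Lemma~\ref{defUV} for ground states), your state is the paper's \eqref{defphih}, and your verification plan is Proposition~\ref{abstractKMSconst} together with the dichotomy of Lemma~\ref{idsummands}. Where you diverge is in uniqueness and injectivity, and your route is leaner. The paper proves that every KMS$_\beta$ state is some $\psi_{\beta,\mu}$ by conditioning on $P=1-\sum_{x\in\Sigma_1}T_xT_x^*$ and establishing a reconstruction formula $\phi(c)=\lim_n(1-e^{-\beta}d)\sum_{k\leq n}\sum_{x\in\Sigma_k}e^{-\beta k}\phi_P(T_x^*cT_x)$ modelled on \cite[Propositions 7.1 and 7.2]{LRR}, which requires the mutual orthogonality of the projections $T_xPT_x^*$ and the convergence $\phi(P_n)\to 1$. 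Your one-step recursion $\psi(T_{b^t})=\psi(T_{b^t}\Delta)+[d\mid t]\,e^{-\beta}d\,\psi(T_{b^{ct/d}})$ is correct --- by Proposition~\ref{charKMS} and Nica covariance, $\psi(T_{b^{t+j}a}T_{b^ja}^*)$ equals $e^{-\beta}\psi(T_{b^{ct/d}})$ if $d\mid t$ and $0$ otherwise --- and iterating it with remainder $(e^{-\beta}d)^k\to 0$ reaches \eqref{psivsmoments} with much less machinery; the Herglotz step is sound because $T_b$ becomes unitary in the quotient of Corollary~\ref{restonbeta}\eqref{factor1} and the trace property $\psi(T_{b^s}A)=\psi(AT_{b^s})$ gives $\sum_{i,j}\lambda_i\overline{\lambda_j}f(t_i-t_j)=\psi(Y^*\Delta Y)\geq 0$ for $Y=\sum_i\lambda_i T_b^{t_i}$ (in the quotient). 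For injectivity the paper runs an induction along the chain $t,\ cd^{-1}t,\ c^2d^{-2}t,\dots$, and that is precisely where it uses $d\nmid c$, to guarantee a first $k$ with $d\nmid c^kd^{-k}t$; your inversion of the recursion avoids the hypothesis entirely, so you in fact prove the stronger statement that $\mu\mapsto\psi_{\beta,\mu}$ is injective for all $c,d$ when $\beta>\ln d$. One point to make explicit in a full write-up: your inversion formula, applied to the constructed state, presupposes $\psi_{\beta,\mu}(T_{b^t}\Delta)=(1-e^{-\beta}d)\int_\T z^t\,d\mu$, i.e.\ that the Herglotz measure of $\psi_{\beta,\mu}$ is $\mu$ itself. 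This is true but needs the (routine, telescoping) check that \eqref{psivsmoments} satisfies your recursion: when $d\mid t$, the index set in \eqref{psivsmoments} for $t$ is $\{1\}$ together with the shift by one of the index set for $ct/d$, and $c^kd^{-k}t=c^{k-1}d^{-(k-1)}(ct/d)$. The trade-off between the two approaches: the paper's reconstruction formula is a more robust tool, evaluating an arbitrary KMS state on arbitrary elements in parallel with \cite{LRR}, while your recursion is shorter, self-contained, and delivers the sharper injectivity statement.
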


The proof of this theorem will occupy the rest of this section. Our first task is to show existence of such states, and for this we need some concrete representations of $C^*(G,P)$.

We consider the subgroup $K:=\{b^t:t\in \Z\}$ of the Baumslag-Solitar group $G=BS(c,d)$, and let $W:K\to U(H)$ be a unitary representation. We choose a section $c:G/K\to G$ for the quotient map, and write $c(g)$ for $c(gK)$. Then we can realise the induced representation $\Ind_K^GW$ as acting in the space $\ell^2(G/K,H)$ according to the formula
\[
\big((\Ind_K^GW)_l\xi\big)(gK)=W_{c(g)^{-1}lc(l^{-1}g)}\big(\xi(l^{-1}gK)\big)\quad\text{for $\l\in G$, $\xi\in \ell^2(G/K,H)$}.
\]
(See, for example, \cite[page~50]{KT}.) 

\begin{prop}\label{indrepconst}
Choose a section $c:G/K\to G$ such that $c(xK)=\stem (x)$ for every $x\in P$, and use $c$ to pull over the usual orthonormal basis $\{e_{k,\sigma}:\sigma\in\Sigma_k\}$ for $\ell^2(\Sigma_k)$ to an orthonormal set in $\ell^2(G/K)$. Then the subspace
\[
H_0:=\clsp\{e_{k,\sigma}\otimes h:k\in \N,\ \sigma\in \Sigma_k, h\in H\}
\]
of $\ell^2(G/K,H)$ is invariant for every $(\Ind_K^GW)_x$ with $x\in P$, and we then have
\begin{equation}\label{formpi(Tx)}
(\Ind_K^GW)_x(e_{k,\sigma}\otimes h)=e_{k+\theta(x),\stem(x\sigma)}\otimes W_{b^t}h\quad\text{where $x\sigma=\stem(x\sigma)b^t$.}
\end{equation}
The map $x\mapsto (\Ind_K^GW)_x\big|_{H_0}$ is a Nica-covariant isometric representation of $P$, and the corresponding representation $\pi$ of $C^*(G,P)$ on $H_0$ satisfies
\begin{equation}\label{formforpi}
\pi(T_x)(e_{k,\sigma}\otimes h)=e_{k+\theta(x),\stem(x\sigma)}\otimes W_{b^t}h\quad\text{where $x\sigma=\stem(x\sigma)b^t$.}
\end{equation}
The operator $\pi(T_b)$ is unitary.
\end{prop}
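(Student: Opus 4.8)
The plan is to verify each assertion in Proposition~\ref{indrepconst} directly from the induced-representation formula, using the stem machinery from \S\ref{sect:stems} to control how $(\Ind_K^GW)_x$ acts on the chosen basis. First I would compute $(\Ind_K^GW)_x(e_{k,\sigma}\otimes h)$ explicitly. With the section $c(gK)=\stem(g)$ for $g\in P$, the basis vector $e_{k,\sigma}$ corresponds to the coset $\sigma K$, and applying the formula gives a vector supported on the coset $x\sigma K$, whose representative is $c(x\sigma K)=\stem(x\sigma)$ by Lemma~\ref{forlisa}. The scalar/operator factor $W_{c(x\sigma)^{-1}x\,c((x\sigma)^{-1}x\sigma K)}$ should simplify: writing $x\sigma=\stem(x\sigma)b^t$ for the appropriate $t\in\N$, the group element $c(x\sigma K)^{-1}x\sigma=\stem(x\sigma)^{-1}x\sigma=b^t$ lands in $K$, so the cocycle collapses to $W_{b^t}$. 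This yields \eqref{formpi(Tx)}, and since $\stem(x\sigma)\in\Sigma_{k+\theta(x)}$ (the stem of a height-$(k+\theta(x))$ element of $P$), the image stays in $H_0$, giving invariance.

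Next I would check that $x\mapsto(\Ind_K^GW)_x|_{H_0}$ is an isometric representation: it is a restriction of a unitary-induced representation, so each operator is already isometric on $H_0$ once invariance is established, and multiplicativity $S_xS_y=S_{xy}$ follows either from functoriality of induction or, more concretely, by composing \eqref{formpi(Tx)} twice and invoking the additivity clause of Lemma~\ref{forlisa} (namely $xy=\stem(xy)b^{s+t}$) together with $W_{b^s}W_{b^t}=W_{b^{s+t}}$. For Nica covariance, rather than reprove \eqref{altNica} by hand, I would exhibit isometries $U:=S_b$ and $V:=S_a$ and verify relations \eqref{t1}, \eqref{t4}, \eqref{t5} of Proposition~\ref{defrel}, which then delivers both Nica covariance and the homomorphism $\pi=\pi_{U,V}$ with $\pi(T_b)=U$, $\pi(T_a)=V$; formula \eqref{formforpi} is then just \eqref{formpi(Tx)} restated for $\pi(T_x)=S_x$.

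Finally, for the claim that $\pi(T_b)$ is unitary, I would use \eqref{formforpi} with $x=b$. Since $\theta(b)=0$ and $b\sigma=\stem(b\sigma)b^t$ where $\stem(b\sigma)=h_{k,1}(\sigma)=\stem(b^1\sigma)$, the operator $\pi(T_b)$ sends $e_{k,\sigma}\otimes h$ to $e_{k,h_{k,1}(\sigma)}\otimes W_{b^t}h$, i.e.\ it permutes the first tensor factor within each $\Sigma_k$ according to the bijection $h_{k,1}$ (Lemma~\ref{maponstems}\eqref{hkm}) while acting on $H$ by a unitary $W_{b^t}$. Being a composition of a unitary permutation of basis vectors with fibrewise unitaries, $\pi(T_b)$ is surjective as well as isometric, hence unitary.

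The main obstacle I anticipate is the bookkeeping in the cocycle simplification for \eqref{formpi(Tx)}: one must confirm that the section-dependent factor $c(g)^{-1}l\,c(l^{-1}g)$ in the induced-representation formula really reduces to $W_{b^t}$ with exactly the $t$ determined by $x\sigma=\stem(x\sigma)b^t$, and that no stray factor of $b$ survives. Everything else—multiplicativity, Nica covariance via Proposition~\ref{defrel}, and unitarity of $\pi(T_b)$—reduces to the stem lemmas already proved, so the crux is getting the induced-representation formula to land cleanly on the stem normal form.
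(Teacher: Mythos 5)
Your proposal is correct, and its overall architecture---invariance of $H_0$, the formula \eqref{formpi(Tx)}, Nica covariance via the presentation in Proposition~\ref{defrel}, and unitarity of $\pi(T_b)$ from the bijection $\sigma\mapsto\stem(b\sigma)$---matches the paper's. The one genuine difference is how \eqref{formpi(Tx)} is obtained. The paper evaluates the induced-representation cocycle only on the generators $a$ and $b$ (obtaining \eqref{formindWa} and \eqref{formindWb}) and then bootstraps to general $x\in P$ by induction on the normal form, invoking Lemma~\ref{forlisa} at each step. You instead evaluate the cocycle directly for arbitrary $x\in P$: since $x\sigma\in P$, the section gives $c(x\sigma K)=\stem(x\sigma)$ (this is by the definition of the section, not by Lemma~\ref{forlisa} as you cite), and since $\sigma$ is itself a stem, $c(\sigma K)=\sigma$, so the cocycle collapses to $\stem(x\sigma)^{-1}x\sigma=b^t$ with $t\in\N$ exactly as required. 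This one-shot computation is valid and eliminates the induction entirely; it is arguably cleaner, at the cost of doing the cocycle bookkeeping for arbitrary $x$ rather than just for the two generators. One small efficiency you miss: the paper proves $U=S_b$ unitary \emph{before} checking the relations of Proposition~\ref{defrel}, so that by Remark~\ref{r2} only \eqref{t1} and \eqref{t5} need verification (\eqref{t1} is immediate from multiplicativity and $ab^c=b^da$, and \eqref{t5} from orthogonality of ranges indexed by distinct stems); your plan to verify \eqref{t4} directly is harder than necessary, though your unitarity argument for $\pi(T_b)$---which uses nothing beyond the formula for $S_b$---could simply be moved earlier to effect the same shortcut.
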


\begin{proof}
It suffices to check invariance for the generators $x=a$ and $x=b$ of $P$. First we take $x=a$. We are viewing functions in $\ell^2(\Sigma_k)$ as functions on $G/K$ by viewing $\Sigma_k$ as the subset $\{\sigma K:\sigma\in \Sigma_k\}$ of $G/K$ and extending the functions to be $0$ off $\Sigma_k$. Thus
\[
e_{k,\sigma}(gK)=\begin{cases}
1&\text{if $gK=\sigma K$}\\
0&\text{otherwise},
\end{cases}
\]
and
\begin{align*}
(\Ind_K^GW)_a(e_{k,\sigma}\otimes h)(gK)
&=W_{c(g)^{-1}ac(a^{-1}g)}(e_{k,\sigma}(a^{-1}gK)h)\\
&=\begin{cases}
W_{c(g)^{-1}ac(a^{-1}g)}h&\text{if $a^{-1}gK=\sigma K$}\\
0&\text{otherwise}.
\end{cases}
\end{align*}
Now we have
\[
a^{-1}gK=\sigma K\Longleftrightarrow a^{-1}g=\sigma b^n\text{ for some $n$}\Longleftrightarrow g=a\sigma b^n\text{ for some $n$;}
\]
then, since $a\sigma$ is a stem, and $c(\tau K)=\tau$ for $\tau\in \bigcup_k\Sigma_k$, we have $c(g)^{-1}ac(a^{-1}g)=(a\sigma)^{-1}a\sigma=e$. So 
\begin{align}\label{formindWa}
(\Ind_K^GW)_a(e_{k,\sigma}\otimes h)(gK)&=\begin{cases}
h&\text{if $gK=a\sigma K$}\\
0&\text{otherwise}
\end{cases}\\
&=(e_{k+1,a\sigma}\otimes h)(gK).\notag
\end{align}
This gives invariance of $H_0$ for $(\Ind_K^GW)_a$. 

For $x=b$, similar considerations give
\[
(\Ind_K^GW)_b(e_{k,\sigma}\otimes h)(gK)
=\begin{cases}
W_{c(g)^{-1}bc(b^{-1}g)}h&\text{if $b^{-1}gK=\sigma K$}\\
0&\text{otherwise,}
\end{cases}
\]
and 
\[
b^{-1}gK=\sigma K\Longleftrightarrow b^{-1}g=\sigma b^n\text{ for some $n$}\Longleftrightarrow g=b\sigma b^n\text{ for some $n$.}
\]
While $b\sigma$ need not be a stem, it is certainly in $P$, and hence $c(b\sigma K)=\stem(b\sigma)$. Then $b\sigma=\stem(b\sigma)b^t$ for some $t\in \N$, and
\[
c(g)^{-1}bc(b^{-1}g)=(\stem(b\sigma))^{-1}b\sigma=b^t.
\]
Thus
\begin{equation}\label{formindWb}
(\Ind_K^GW)_b(e_{k,\sigma}\otimes h)
=e_{k,\stem(b\sigma)}\otimes W_{b^t}h,
\end{equation}
which is back in $\ell^2(\Sigma_k)\otimes H\subset H_0$.

Since both $U:=(\Ind_K^GW)_b|_{H_0}$ and $V:=(\Ind_K^GW)_a|_{H_0}$ are the restrictions of unitary operators, they are isometries.  We next prove \eqref{formpi(Tx)}. First, we prove it by induction on $i$ for $x$ of the form $b^i$. It is by definition true for $i=1$. If it is true for $i$, then
\begin{align*}
(\Ind_K^GW)_{b^{i+1}}&(e_{k,\sigma}\otimes h)\\
&=(\Ind_K^GW)_{b^{i}}(e_{k,\stem(b\sigma)}\otimes W_{b^t}h)\quad\text{where $b\sigma=\stem(b\sigma)b^t$}\\
&=e_{k,\stem(b^i\stem(b\sigma))}\otimes W_{b^s}W_{b^t}h\quad\text{where $b^i\stem(b\sigma)=\stem(b^i\stem(b\sigma))b^s$.}
\end{align*}
Lemma~\ref{forlisa} implies that $\stem(b^{i+1}\sigma)=\stem(b^i\stem(b\sigma))$ and $b^{i+1}\sigma=\stem(b^{i+1}\sigma)b^{s+t}$, and we have proved the result for $x=b^i$. It is quite easy to check that it works for $x=b^ia$, and then a similar argument by induction on the height of $x$ (again using Lemma~\ref{forlisa}) gives  the general result.

We aim to prove that $U$ and $V$ satisfy the relations of Proposition~\ref{defrel}. Since we know from Lemma~\ref{maponstems} that $\sigma\mapsto \stem(b\sigma)$ is a bijection, Equation \eqref{formindWb} implies that $U$ is surjective. Thus $U$ is unitary, and it suffices to verify that $U$ and $V$ satisfy relations \eqref{t1} and \eqref{t5} of Proposition~\ref{defrel}.

For \eqref{t1} we have on the one hand
\begin{align*}
VU^c(e_{k,\sigma}\otimes h)&=V(e_{k,\stem(b^c\sigma)}\otimes W_{b^t}h)\quad\text{where $b^c\sigma=\stem(b^c\sigma)b^t$}\\
&=e_{k+1,a\stem(b^c\sigma)}\otimes W_{b^t}h\quad\text{where $b^c\sigma=\stem(b^c\sigma)b^t$;}
&\end{align*}
on the other hand
\[
U^dV(e_{k,\sigma}\otimes h)=e_{k+1,\stem(b^da\sigma)}\otimes W_{b^s}h\quad\text{where $b^da\sigma=\stem(b^da\sigma)b^s$.}
\]
Now Lemma~\ref{forlisa} gives $a\stem(b^c\sigma)=\stem(ab^c\sigma)=\stem (b^da\sigma)$, and 
\[
b^t=\stem(b\sigma)^{-1}b^c\sigma=(a\stem(b\sigma))^{-1}ab^c=\stem(b^da\sigma)^{-1}b^da\sigma=b^s,
\]
and we have \eqref{t1}. For \eqref{t5}, we observe that 
$U^jV(e_{k,\sigma}\otimes h)$ has the form $e_{k+1,\stem(b^ja\sigma)}\otimes W_{b^t}h$. For $0\leq j<d$ and $\sigma\in \Sigma_k$, $b^ja\sigma$ is itself a stem, and for $1\leq j<d$ it is distinct from $a\sigma$. Thus for $1\leq j<d$ the vector $U^jV(e_{k,\sigma}\otimes h)$ is always orthogonal to $V(e_{k,\sigma}\otimes h)=e_{k+1,a\sigma}\otimes h$, and hence to the range of $V$. Thus $V^*U^jV(e_{k,\sigma}\otimes h)$ is always $0$, and we have~\eqref{t5}. 

Now Proposition~\ref{defrel} implies that $S:x\mapsto (\Ind_K^GW)_x\big|_{H_0}$ is Nica covariant, and gives a homomorphism $\pi=\pi_S$ such that $\pi(T_{b^t})=S_{b^t}=U^t$ is unitary.
\end{proof} 

\begin{prop}\label{abstractKMSconst}
In the situation of Proposition~\ref{indrepconst}, fix a unit vector $h\in H$. Then for every $\beta>\ln d$ there is a KMS$_\beta$ state $\psi_h$ on $(C^*(G,P),\alpha)$ such that
\begin{equation}\label{defphih}
\psi_h(a)=(1-e^{-\beta}d)\sum_{k=0}^\infty\sum_{\sigma\in\Sigma_k}e^{-\beta k}\big(\pi(a)(e_{k,\sigma}\otimes h)\,|\,e_{k,\sigma}\otimes h\big)\quad\text{for $a\in C^*(G,P)$.}
\end{equation}
\end{prop}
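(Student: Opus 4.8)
The plan is to verify first that $\psi_h$ is a well-defined state, and then to check the KMS condition through the characterisation in Proposition~\ref{charKMS}.

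For the first step, note that for each pair $(k,\sigma)$ the map $a\mapsto\big(\pi(a)(e_{k,\sigma}\otimes h)\,|\,e_{k,\sigma}\otimes h\big)$ is a vector state, since $e_{k,\sigma}\otimes h$ is a unit vector. Thus \eqref{defphih} presents $\psi_h$ as a positive combination of states with total weight $(1-e^{-\beta}d)\sum_{k\geq 0}\sum_{\sigma\in\Sigma_k}e^{-\beta k}=(1-e^{-\beta}d)\sum_{k\geq 0}(e^{-\beta}d)^k=1$, using $|\Sigma_k|=d^k$ and $e^{-\beta}d<1$ (which holds because $\beta>\ln d$). The same geometric estimate shows the series converges absolutely in norm, so $\psi_h$ is a state.

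For the KMS condition, by Proposition~\ref{charKMS} it suffices to show that $\psi_h(T_xT_y^*)$ satisfies \eqref{eq:KMS} for all $x,y\in P$, and I would compute the diagonal coefficients $\big(\pi(T_xT_y^*)(e_{k,\sigma}\otimes h)\,|\,e_{k,\sigma}\otimes h\big)$ directly from \eqref{formforpi}. Since $\pi(T_x)$ raises the index $k$ by $\theta(x)$ and $\pi(T_y)^*$ lowers it by $\theta(y)$, these coefficients vanish unless $\theta(x)=\theta(y)=:\ell$, which yields the zero case of \eqref{eq:KMS} when the heights differ. When $\theta(x)=\theta(y)$, expanding $\pi(T_y^*)(e_{k,\sigma}\otimes h)$ and $\pi(T_x^*)(e_{k,\sigma}\otimes h)$ as sums over $\rho\in\Sigma_{k-\ell}$ with $\stem(y\rho)=\sigma$ and $\stem(x\rho)=\sigma$ respectively, orthonormality of $\{e_{k-\ell,\rho}\}$ selects terms with a common index $\rho$. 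Writing $y=\stem(y)b^{s}$ gives $\stem(y\rho)=\stem(y)\stem(b^{s}\rho)$, so by Lemma~\ref{maponstems}\eqref{hkm} the map $\rho\mapsto\stem(y\rho)$ is injective on $\Sigma_{k-\ell}$; hence when $\stem(x)\neq\stem(y)$, equivalently $x\vee y=\infty$ by Lemma~\ref{lem:iains}\eqref{it:iains2}, no common $\rho$ exists and $\psi_h(T_xT_y^*)=0$, covering the last zero case.

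The substantive case is $x\vee y=x$, i.e.\ $x=yb^t$ with $y^{-1}x=b^t$ (the case $x\vee y=y$ follows by taking adjoints and using $\psi_h(a^*)=\overline{\psi_h(a)}$). Here Lemma~\ref{forlisa} gives $\stem(x\rho)=\stem\big(y\,\stem(b^t\rho)\big)$, so the injectivity of $\stem(y\cdot)$ forces the surviving index $\rho$ to satisfy $\stem(b^t\rho)=\rho$; for such $\rho$ the identity $x\rho=y\rho b^{u}$ (where $b^t\rho=\rho b^{u}$) shows the surviving inner product equals $(W_{b^{u}}h\,|\,h)$, which is exactly the diagonal coefficient of $\pi(T_{b^t})$ at $e_{k-\ell,\rho}\otimes h$. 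Summing over $\sigma\in\Sigma_k$ therefore collapses to a sum over $\rho\in\Sigma_{k-\ell}$; reindexing $m=k-\ell$ and factoring out $e^{-\beta\ell}$ then reproduces $e^{-\beta\ell}\psi_h(T_{b^t})$, as \eqref{eq:KMS} requires. The main obstacle is precisely this collapse: confirming that the level-$k$ contributions reassemble into $\psi_h(T_{b^t})$ after reindexing. This rests entirely on the stem combinatorics of Section~\ref{sect:stems} --- the injectivity of $\rho\mapsto\stem(y\rho)$ and the compatibility $\stem(x\rho)=\stem\big(y\,\stem(b^t\rho)\big)$ --- together with the unitarity of $\pi(T_b)$, which ensures $\pi(T_{b^t})$ acts as the permutation-plus-phase encoded by the coefficients $(W_{b^{u}}h\,|\,h)$.
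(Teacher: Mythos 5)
Your proof is correct and follows essentially the same route as the paper: well-definedness via the geometric series $\sum_k e^{-\beta k}d^k = (1-e^{-\beta}d)^{-1}$, then verification of \eqref{eq:KMS} through Proposition~\ref{charKMS} using the explicit action \eqref{formforpi}, with the same case analysis (unequal heights, $x\vee y=\infty$, and $x=yb^t$ via Lemma~\ref{lem:iains}). The only difference is in execution of the main case: the paper first replaces $y$ by $\stem(y)$ (legitimate because $\pi(T_b)$ is unitary), so the collapse to $e^{-\beta\theta(y)}\psi_h(T_{b^t})$ comes from the clean injection $\tau\mapsto y\tau$, whereas you keep $y$ general and obtain the same collapse from Lemma~\ref{forlisa} together with the injectivity of $\rho\mapsto\stem(y\rho)$ supplied by Lemma~\ref{maponstems}\eqref{hkm} — both arguments rest on the same stem combinatorics and are equally valid.
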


\begin{proof}
We begin by checking that the series converges. Indeed, since $|\Sigma_k|=d^k$, and $e^\beta>d$, we have
\[
\sum_{k=0}^\infty\sum_{\sigma\in\Sigma_k}e^{-\beta k}\big(e_{k,\sigma}\otimes h\,|\,e_{k,\sigma}\otimes h\big)=\sum_{k=0}^\infty e^{-\beta k}d^k=\frac{1}{1-e^{-\beta}d}.
\]
In particular, $\psi_h(1)=1$, and we have a well-defined state.

We now want to verify that $\psi_h$ satisfies Equation~\eqref{eq:KMS} in Proposition~\ref{charKMS}. So we take $x,y\in P$ and consider 
\begin{equation}\label{singleip}
\big(\pi(T_xT_y^*)(e_{k,\sigma}\otimes h)\,|\,e_{k,\sigma}\otimes h\big)
=\big(\pi(T_y^*)(e_{k,\sigma}\otimes h)\,|\,\pi(T_x^*)(e_{k,\sigma}\otimes h)\big).
\end{equation}
For each $k\geq 0$, we identify
\[
\clsp\{e_{k,\sigma}\otimes g:\sigma\in \Sigma_k, g\in H\}
\]
with $\ell^2(\Sigma_k)\otimes H$. Then the subspaces $\{\ell^2(\Sigma_k)\otimes H:k\in \N\}$ are mutually orthogonal, and we have 
\[
H_0=\textstyle{\bigoplus_{k=0}^{\infty}\ell^2(\Sigma_k)\otimes H}.
\]
The operator $\pi(T_x)$ maps each summand $\ell^2(\Sigma_k)\otimes H$ into $\ell^2(\Sigma_{k+\theta(x)})\otimes H$, and hence the adjoint $T_x^*$ vanishes on $\ell^2(\Sigma_k)\otimes H$ for $k<\theta(x)$, and maps the other $\ell^2(\Sigma_k)\otimes H$ into $\ell^2(\Sigma_{k-\theta(x)})\otimes H$. Thus when $\theta(x)\not=\theta(y)$,  $T_x^*$ and $T_y^*$ map $\ell^2(\Sigma_k)\otimes H$ into orthogonal summands in $H_0=\bigoplus \ell^2(\Sigma_k)\otimes H$. Thus if $\theta(x)\not=\theta(y)$, we have $\psi_h(T_xT_y^*)=0$. 

It remains to consider $x,y$ satisfying $\theta(x)=\theta(y)$. Then by Lemma~\ref{lem:iains} we have one of $x\vee y=x$, $x\vee y=y$ or $x\vee y=\infty$. If $x\vee y=\infty$, then Nica covariance of $T$ implies that $T_x^* T_y=0$, so that the range of $\pi(T_x)$ is orthogonal to the range of $\pi(T_y)$; since $\pi(T_x)^*(e_{k,\sigma}\otimes h)=0$ unless $e_{k,\sigma}\otimes h$ is in the range of $\pi(T_x)$, all the inner products \eqref{singleip} are $0$, and $\psi(T_xT_y^*)=0$. Since 
\[
\big(\pi(T_yT_x^*)(e_{k,\sigma}\otimes h)\,|\,e_{k,\sigma}\otimes h\big)=
\overline{\big(\pi(T_xT_y^*)(e_{k,\sigma}\otimes h)\,|\,e_{k,\sigma}\otimes h\big)},
\]
it remains for us to compute $\psi(T_xT_y^*)$ when $x\vee y=x$ (if $x\vee y=y$ switch $x$ and $y$). So suppose $x\vee y=x$. Then Lemma~\ref{lem:iains} implies that $x=yb^t$ for some $t\in \N$.

We begin by fixing $\sigma\in \Sigma_k$ and computing
\begin{align}\label{firstcalc}
\big(\pi(T_xT_y^*)(e_{k,\sigma}\otimes h)\,|\,e_{k,\sigma}\otimes h\big)
&=\big(\pi(T_yT_{b^t}T_y^*)(e_{k,\sigma}\otimes h)\,|\,e_{k,\sigma}\otimes h\big)\\
&=\big(\pi(T_{b^t})\pi(T_y^*)(e_{k,\sigma}\otimes h)\,|\,\pi(T_y^*)(e_{k,\sigma}\otimes h)\big).
\notag
\end{align}

Notice that because $\pi(T_b)$ is unitary, the operator $\pi(T_xT_y^*)$ is not changed if we replace $y$ by its stem. So we assume that $y$ is a stem. Then  $y\sigma$ is also a stem, so the formula \eqref{formforpi} implies that $\pi(T_y)(e_{k,\sigma}\otimes h)=e_{k+\theta(y),y\sigma}\otimes h$. Since each $e_{k,\sigma}\otimes h $ is either in the range of $\pi(T_y)$ or orthogonal to it, $\pi(T_y)^*(e_{k,\sigma}\otimes h)$ vanishes unless $k\geq \theta(y)$ and $\sigma$ has the form $y\tau$ for some $\tau\in \Sigma_{k-\theta(y)}$. Then
\[
\big(\pi(T_xT_y^*)(e_{k,\sigma}\otimes h)\,|\,e_{k,\sigma}\otimes h\big)
=\big(\pi(T_{b^t})(e_{k-\theta(y),\tau}\otimes h)\,|\,e_{k-\theta(y),\tau}\otimes h\big).
\]
Next we observe that because $y$ is a stem, $\tau\mapsto y\tau$ is an injection of $\Sigma_j$ into $\Sigma_{j+\theta(y)}$ for every $j\geq 0$. Thus
\begin{align*}
\psi_h(T_x T_y^*)&=(1-e^{-\beta}d)\sum_{k=\theta(y)}^\infty\sum_{\tau\in \Sigma_{k-\theta(y)}}e^{-\beta k}\big(\pi(T_{b^t})(e_{k-\theta(y),\tau}\otimes h)\,|\,e_{k-\theta(y),\tau}\otimes h\big)\\
&=(1-e^{-\beta}d)e^{-\beta\theta(y)}\sum_{j=0}^\infty\sum_{\tau\in \Sigma_{j}}e^{-\beta j}\big(\pi(T_{b^t})(e_{j,\tau}\otimes h)\,|\,e_{j,\tau}\otimes h\big)\\
&=e^{-\beta\theta(y)}\psi_h(T_{b^t})\\
&=e^{-\beta\theta(y)}\psi_h(T_{y^{-1}x}).
\end{align*}
Thus $\psi_h$ satisfies \eqref{eq:KMS}, and Proposition~\ref{charKMS} implies that $\psi_h$ is a KMS$_\beta$ state.
\end{proof}

The subgroup $K$ is a copy of the additive group $\Z$, written multiplicatively because it sits inside the nonabelian group $G$. Thus $C^*(K)$ is isomorphic to the algebra $C(\T)$, and states on $C^*(K)$ are given by probability measures $\mu$ on $\T$. For such a measure $\mu$, we consider the representation $W=W(\mu)$ of $K$ on $H=L^2(\T,d\mu)$ given by 
\begin{equation}\label{defW}
(W_{b^t}f)(z)=z^tf(z),
\end{equation}
and the unit vector $h=1_\mu$ in $L^2(\T,d\mu)$ associated to the constant function $1$. Then Proposition~\ref{abstractKMSconst} gives us a KMS$_\beta$ state $\psi_{\beta,\mu}:=\psi_{1_\mu}$, and we need to calculate the values of this state on the elements $T_{b^t}$, which by Corollary~\ref{btsuffices} determine the state. For $k=0$, we have just the trivial stem $e$, and 
\begin{equation}\label{k=0}
\big(\pi(T_{b^t})(e_{k,e}\otimes 1_\mu)\,\big|\,e_{k,e}\otimes 1_\mu\big)=(W_{b^t}1_\mu\,|\,1_\mu)=\int_{\T} z^t\,d\mu(z).
\end{equation}
For $k\geq 1$ and each stem $\sigma\in \Sigma_k$, we have 
\begin{equation}\label{calcpiT}
\pi(T_{b^t})(e_{k,\sigma}\otimes 1_\mu)=e_{k,\stem(b^t\sigma)}\otimes W_{b^s}1_\mu\quad\text{where $b^t\sigma=\stem(b^t\sigma)b^s$.}
\end{equation}
Thus 
\[
\big(\pi(T_{b^t})(e_{k,\sigma}\otimes 1_\mu)\,\big|\,e_{k,\sigma}\otimes 1_\mu\big)
\]
vanishes unless $\stem(b^t\sigma)=\sigma$, and hence we need to know when this happens.

\begin{lemma}\label{idsummands}
Suppose that $\sigma\in \Sigma_k$ for some $k\geq 1$ and $b^t\in K$. Then $\stem(b^t\sigma)=\sigma$ if and only if $d$ divides $c^jd^{-j}t$ for every $j$ such that $1\leq j<k$. If so, we have
\begin{equation}\label{carrythrough}
b^t\sigma =\sigma b^{c^kd^{-k}t}.
\end{equation}
\end{lemma}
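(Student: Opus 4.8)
The plan is to induct on the height $k$, at each stage peeling off the leading syllable of $\sigma$. Write $\sigma=b^{s_0}ab^{s_1}a\cdots b^{s_{k-1}}a$ in normal form, so each $0\le s_i<d$, and for $k\ge2$ put $\sigma'=b^{s_1}a\cdots b^{s_{k-1}}a\in\Sigma_{k-1}$, giving $\sigma=b^{s_0}a\sigma'$. Left multiplication gives $b^t\sigma=b^{t+s_0}a\sigma'$, and writing $t+s_0=q_0d+r_0$ with $0\le r_0<d$ and applying the defining relation in the iterated form $b^{q_0d}a=ab^{q_0c}$ produces $b^t\sigma=b^{r_0}a\,b^{q_0c}\sigma'$. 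The key structural observation is that the leading syllable $b^{r_0}a$ is now locked in: writing $b^{q_0c}\sigma'=\stem(b^{q_0c}\sigma')b^m$, the word $b^{r_0}a\,\stem(b^{q_0c}\sigma')$ has first exponent $<d$ followed by a height-$(k-1)$ stem, so it lies in $\Sigma_k$, and hence $b^t\sigma=\bigl(b^{r_0}a\,\stem(b^{q_0c}\sigma')\bigr)b^m$ is a normal form. By uniqueness of normal forms (equivalently Lemma~\ref{forlisa}),
\[
\stem(b^t\sigma)=b^{r_0}a\,\stem(b^{q_0c}\sigma').
\]

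Thus $\stem(b^t\sigma)=\sigma$ if and only if $r_0=s_0$ and $\stem(b^{q_0c}\sigma')=\sigma'$. The first condition is exactly $d\mid t$, in which case $q_0=t/d$ and $q_0c=c\,d^{-1}t$; when it fails the two stems already differ in their first exponent, so both sides of the desired equivalence fail. Feeding $q_0c=cd^{-1}t$ into the inductive hypothesis for $\sigma'\in\Sigma_{k-1}$ shows $\stem(b^{q_0c}\sigma')=\sigma'$ if and only if $d\mid c^{i}d^{-i}(q_0c)=c^{i+1}d^{-(i+1)}t$ for $0\le i<k-1$; reindexing, this is $d\mid c^jd^{-j}t$ for $1\le j<k$. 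Together with the condition $d\mid t=c^0d^0t$ coming from the current level, this yields the divisibility for all $0\le j<k$ (which is the range used in Theorem~\ref{KMSToe}). When these hold, the inductive hypothesis also gives $b^{q_0c}\sigma'=\sigma'b^{c^{k-1}d^{-(k-1)}(q_0c)}=\sigma'b^{c^kd^{-k}t}$, so $b^t\sigma=b^{s_0}a\,\sigma'b^{c^kd^{-k}t}=\sigma b^{c^kd^{-k}t}$, which is \eqref{carrythrough}. The base case $k=1$ is immediate: $b^t\sigma=b^{t+s_0}a$ has stem $b^{(t+s_0)\bmod d}a$, equal to $\sigma$ precisely when $d\mid t$, and then $b^t\sigma=\sigma b^{cd^{-1}t}$.

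The only real work here is bookkeeping, and I expect the main point needing care to be the transformation of the left multiplier from $b^t$ at one level to $b^{q_0c}=b^{cd^{-1}t}$ at the next: it is exactly this step that converts the single condition $d\mid t$ into the geometric chain $d\mid c^jd^{-j}t$ and builds up the final exponent $c^kd^{-k}t$. The other point to state carefully is the ``locking in'' of $b^{r_0}a$, which relies on $0\le r_0<d$ together with uniqueness of the Higman--Neumann--Neumann normal form; once this recursion is justified, both directions of the equivalence and the fact that the conclusion depends only on $t,c,d,k$ (and not on the individual exponents of $\sigma$) drop out simultaneously.
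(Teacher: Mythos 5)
Your proof is correct and follows essentially the same route as the paper's: both peel off the leading syllable $b^{s_0}a$, observe that matching it forces $d\mid t$, and convert the multiplier into $b^{cd^{-1}t}$ acting on the remaining height-$(k-1)$ stem --- you merely package the paper's ``continuing this way'' iteration as a formal induction on $k$ that yields both directions of the equivalence and \eqref{carrythrough} simultaneously. One useful observation you make implicitly: what you prove (and what the paper's own proof and the sum in Theorem~\ref{KMSToe} actually use) is divisibility for the range $0\le j<k$; the range ``$1\le j<k$'' in the lemma's statement is an off-by-one slip, since for $k=1$ that condition is vacuous while $d\mid t$ is still required.
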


\begin{proof}
Suppose first that $\stem(b^t\sigma)=\sigma$, and write $\sigma=b^{t_0}ab^{t_1}a\cdots b^{t_{k-1}}a$ in normal form. Then since $\stem(b^t\sigma)$ begins with $\stem(b^tb^{t_0}a)$, we have $\stem(b^tb^{t_0}a)=b^{t_{0}}a$. Write $t+t_0=r+nd$ with $0\leq r<d$, and then $b^ra=\stem(b^tb^{t_0}a)=b^{t_0}a$. Now uniqueness of the normal form gives $r=t_0$, $t=nd$  and $b^tb^{t_0}a=b^{t_0}ab^{nc}=b^{t_0}ab^{cd^{-1}t}$. Now running a similar argument on $b^{cd^{-1}t}b^{t_1}a$ shows that $d$ divides $cd^{-1}t$, and $b^{cd^{-1}t}b^{t_1}a=b^{t_1}ab^{c^2d^{-2}t}$. Continuing this way shows that $d$ divides $c^jd^{-j}t$ for every $j<k$, and gives the formula \eqref{carrythrough}.

For the converse, just note that the condition on $d$ allows us to pull $b^t$  through $\sigma$ without changing the powers $b^{t_j}$ for $j\leq k$.
\end{proof}

\begin{rmk}
When $c$ and $d$ are coprime, $d$ divides $c^jd^{-j}t$ if and only if $d^{j+1}$ divides $t$, and hence left multiplication by  $b^t$ fixes all the stems in $\Sigma_k$ if and only if $d^k$ divides $t$. However, in general it is possible that $d$ divides $c^jd^{-j}t$ but does not divide $c^{j-1}d^{-(j-1)}$. For example, consider $c=8$ and $d=12$. Then $cd^{-1}t=2t/3$ and $c^2d^{-2}t=4t/9$. Thus with $t=27$, $d=12$ divides $c^2d^{-2}t$ but not $cd^{-1}t$.
\end{rmk}

Equation~\eqref{k=0} tells us where the first integral in \eqref{psivsmoments} comes from. For $k\geq 1$, Lemma~\ref{idsummands} tells us why only the summands described in \eqref{psivsmoments} survive. Suppose that $k\in \N$ and $d$ divides $c^jd^{-j}t$ whenever $1\leq j<k$. Then \eqref{carrythrough} tells us that the $s$ in \eqref{calcpiT} is $c^kd^{-k}t$. Thus the $k$th summand in the formula \eqref{defphih} for $\psi_{\beta,\mu}(T_{b^t})=\phi_{1_\mu}(T_{b^t})$ is
\begin{align*}
(1-e^{-\beta}d)\sum_{\sigma\in \Sigma_k}e^{-\beta k}&\big(\pi(T_{b^t})(e_{k,\sigma}\otimes 1_\mu)\,\big|\,e_{k,\sigma}\otimes 1_\mu\big)\\
&=(1-e^{-\beta}d)\sum_{\sigma\in \Sigma_k}e^{-\beta k}\big(e_{k,\sigma}\otimes W_{b^{c^kd^{-k}t}}1_\mu)\,\big|\,e_{k,\sigma}\otimes 1_\mu\big)\\
&=(1-e^{-\beta}d)\sum_{\sigma\in \Sigma_k}e^{-\beta k}\int_{\T} z^{c^kd^{-k}t}\,d\mu(z)\\
&=(1-e^{-\beta}d)|\Sigma_k|e^{-\beta k}\int_{\T} z^{c^kd^{-k}t}\,d\mu(z)\\
&=(1-e^{-\beta}d)d^ke^{-\beta k}\int_{\T} z^{c^kd^{-k}t}\,d\mu(z).
\end{align*}
Thus we recover the formula \eqref{psivsmoments} for $\psi_{\beta,\mu}(T_{b^t})$.

Next we have to prove that every KMS$_\beta$ state has the form $\psi_{\beta,\mu}$. So we suppose that $\phi$ is a KMS$_\beta$ state. Recall that the set
\[
\{T_x:x\in \Sigma_1\}=\{U^iV: 0\leq i<d\}
\]
is a Toeplitz-Cuntz-Krieger family, and set
\[
P:=1-\sum_{x\in \Sigma_1} T_xT_x^*.
\]
Then the KMS condition implies that $\phi(P)=1-e^{-\beta}d$, so we may consider the \emph{conditioned state} 
\[
\phi_P:a\mapsto (1-e^{-\beta}d)^{-1}\phi(PaP).
\]
This is in particular a state on the $C^*$-subalgebra $C^*(K)=C^*(T_b)\cong C(\T)$, and hence is given by a measure $\mu$ on $\T$: we choose the measure that satisfies
\[
\phi_P(T_{b^n})=\int_{\T} z^n\,d\mu(z)\quad\text{ for $n\in \N$.}
\]
We aim to prove that $\phi=\psi_{\beta,\mu}$. The argument follows closely that of \cite[Proposition~7.1]{LRR}, though the details in the calculations are quite different.

We begin by claiming that 
\[
\big\{T_xPT_x^*:x\in \textstyle{\bigcup_{k=0}^{\infty}}\Sigma_k\big\}
\]
is a family of mutually orthogonal projections. To see this, take $x\in \Sigma_j$ and $y\in \Sigma_k$ and consider $PT_x^*T_yP$. If $j=k$, then either $x=y$ or $x\vee y=\infty$, and Nica covariance gives $PT_x^*T_yP=\delta_{x,y}P$. If $j\not=k$, then Nica covariance leaves a factor of the form $PT_\sigma$ or $T_\sigma^* P$ with $\theta(\sigma)>0$. Now we write $\sigma=w\sigma'$ with $w\in \Sigma_1$, and then
\begin{align*}
PT_\sigma&=\big(1-\sum_{z\in \Sigma_1} T_zT_z^*\Big)T_wT_{\sigma'}\\
&=\big(T_w-\sum_{z\in \Sigma_1} T_zT_z^*T_w\Big)T_{\sigma'}\\
&=(T_w-T_w)T_{\sigma'}=0.
\end{align*} 
Thus $PT_x^*T_yP=0$ when $j\neq k$, and the claim follows.

Now for each $n$,  
\[
P_n:=\sum_{k=0}^n\sum_{x\in \Sigma_k} T_xPT_x^*
\]
is a projection. Then as in the proof of \cite[Proposition~7.2]{LRR}, the KMS condition implies that
\begin{align*}
\phi(P_n)&=\sum_{k=0}^n\sum_{x\in \Sigma_k}\phi(T_xPT_x^*)
=\sum_{k=0}^n\sum_{x\in \Sigma_k}e^{-\beta k}\phi(PT_x^*T_x)\\
&=\phi(P)\sum_{k=0}^ne^{-\beta k}d^k=(1-e^{-\beta} d)\sum_{k=0}^ne^{-\beta k}d^k
\end{align*}
converges to $1$ as $n\to \infty$. It follows from \cite[Lemma~7.3]{LRR} that for each $c\in C^*(G,P)$, we have $\phi(P_ncP_n)\to \phi(c)$ as $n\to \infty$.
We now use the KMS condition to simplify
\begin{align*}
\phi(c)=\lim_{n\to\infty}\phi(P_ncP_n)&=\lim_{n\to\infty}\sum_{j,k=0}^n\;\sum_{x\in \Sigma_j,\;y\in \Sigma_k}\phi\big((T_xPT_x^*)c(T_yPT_y^*)\big))\\
&=\lim_{n\to\infty}\sum_{j,k=0}^n\;\sum_{x\in \Sigma_j,\;y\in \Sigma_k}e^{-\beta j}\phi\big(PT_x^*cT_yPT_y^*T_xP\big))\\
&=\lim_{n\to\infty}\sum_{k=0}^n\sum_{x\in \Sigma_k}e^{-\beta k}\phi(PT_x^*cT_xP)\\
&=\lim_{n\to\infty}(1-e^{-\beta}d)\sum_{k=0}^n\sum_{x\in \Sigma_k}e^{-\beta k}\phi_P(T_x^*cT_x).
\end{align*}
This is an analogue of the reconstruction formula of \cite[Proposition~7.2]{LRR}.

To finish off, we take $c=T_{b^t}$ in the reconstruction formula. Then for $x\in \Sigma_k$, 
$T_x^*T_{b^t}T_x$ has the form $T_x^*T_{\stem(b^tx)}T_{b^s}$, which vanishes unless $\stem(b^tx)=x$. In that case Lemma~\ref{idsummands} implies that $d$ divides $c^jd^{-j}t$ for all $j<k$, $b^tx =xb^{c^kd^{-k}t}$ and $T_x^*T_{\stem(b^tx)}T_{b^{c^kd^{-k}t}}=T_{b^{c^kd^{-k}t}}$. (We have to worry separately about $k=0$, though.)
Thus  we have
\[
\phi(T_{b^t})=(1-e^{-\beta}d)\Big(\phi_P(T_{b^t})+\sum_{\{k\geq 1\,:d\,\mid\, c^jd^{-j}t\text{ for }0\leq j<k\}}e^{-\beta k}\phi_P(T_{b^{c^kd^{-k}t}})\Big),
\]
which by  definition of the measure $\mu$ is the same as $\psi_{\beta,\mu}(T_{b^t})$. Thus Corollary~\ref{btsuffices} implies that $\phi=\phi_{\beta,\mu}$.

Now we add the assumption that $d$ does not divide $c$, and aim to prove that $\mu\mapsto\psi_{\beta,\mu}$ is injective.
So we suppose that $\mu$ and $\nu$ are probability measures on $\T$ satisfying $\psi_{\beta,\mu}=\psi_{\beta,\nu}$. To simplify the notation, we observe that the integrals appearing in our formulas are the moments
\[
M_n(\mu):=\int_{\T} z^n\,d\mu
\]
of the measures. Since the non-negative moments characterise a probability measure, we will prove that $M_t(\mu)=M_t(\nu)$ for all $t\in\N$.

We prove by induction on $k$ that, if $t\in \N$, $d$ divides $c^jd^{-j}t$ for all $j<k$ and $d$ does not divide $c^kd^{-k}t$, then
\begin{equation}\label{indhyp}
M_{c^jd^{-j}t}(\mu)=M_{c^jd^{-j}t}(\nu)\quad\text{for all $j$ satisfying $0\leq j\leq k$.}
\end{equation} 
For $k=0$, which we interpret to mean that $d$ does not divide $t$, the sum in \eqref{psivsmoments} is absent, and we have
\[
(1-e^{-\beta}d)M_t(\mu)=\psi_{\beta,\mu}(T_{b^t})=\psi_{\beta,\nu}(T_{b^t})=(1-e^{-\beta}d)M_t(\nu).
\]
Since $e^{\beta}>d$, we deduce that $M_t(\mu)=M_t(\nu)$.

Suppose that the inductive hypothesis is true for $k$, and we have $s\in \N$ such that $d$ divides $c^jd^{-j}s$ for $j<k+1$ and $d$ does not divide $c^{k+1}d^{-(k+1)}s$. Then \eqref{psivsmoments} gives
\[
\psi_{\beta,\mu}(T_{b^s})=(1-e^{-\beta} d)\big(M_s(\mu)+e^{-\beta}dM_{cd^{-1}s}(\mu)+\cdots +e^{-\beta (k+1)}d^{k+1}M_{c^{k+1}d^{-(k+1)}s}(\mu)\big),
\]
and hence $\psi_{\beta,\mu}=\psi_{\beta,\nu}$ implies that
\begin{align}\label{eqmoments}
M_s(\mu)+e^{-\beta}d&M_{cd^{-1}s}(\mu)+\cdots +e^{-\beta (k+1)}d^{k+1}M_{c^{k+1}d^{-(k+1)}s}(\mu)\\
&=M_s(\nu)+e^{-\beta}dM_{cd^{-1}s}(\nu)+\cdots +e^{-\beta (k+1)}d^{k+1}M_{c^{k+1}d^{-(k+1)}s}(\nu).\notag
\end{align}
Notice that $t=cd^{-1}s$ has the property that $d$ divides $c^jd^{-j}t$ for all $j<k$ and $d$ does not divide $c^kd^{-k}t$. Thus the inductive hypothesis implies that
\[
M_{c^{j+1}d^{-(j+1)}s}(\mu)=M_{c^jd^{-j}t}(\mu)=M_{c^jd^{-j}t}(\nu)=M_{c^{j+1}d^{-(j+1)}s}(\nu) \text{ for $0\leq j<k$.}
\]
But this says precisely that \eqref{indhyp} holds for $1\leq j\leq k+1$. Now cancelling terms in \eqref{eqmoments} gives $M_s(\mu)=M_s(\nu)$, which is \eqref{indhyp} for the remaining case $j=0$.  
 Thus we have proved the inductive hypothesis for $k+1$, and this completes the proof by induction.

Now we fix $t\in \N$. Since $d$ does not divide $c$, there is a first $k$ such that $d$ does not divide $c^kd^{-k}t$, and then we have \eqref{indhyp} for this $k$. But now taking $j=0$ in \eqref{indhyp} shows that $M_t(\mu)=M_t(\nu)$. Thus $\mu$ and $\nu$ have the same moments, and $\mu=\nu$. Thus $\mu\mapsto \psi_{\beta,\mu}$ is injective.

Since the sum in \eqref{psivsmoments} is always finite, the assignment $\mu\mapsto \psi_{\beta,\mu}$ is affine and continuous for the weak* topologies. We have shown that it is a bijection of the compact space $P(\T)$ onto the simplex of KMS$_\beta$ states. Hence it is a homeomorphism, and this completes the proof of Theorem~\ref{KMSToe}.

\section{KMS states at the critical inverse temperature}

Recall from Corollary~\ref{restonbeta} that every KMS$_{\ln d}$ state of $C^*(G,P)$ factors through the quotient map of $C^*(G,P)$ onto the Cuntz algebra $\OO(G,P)$. We write $\bar T_x$ for the image of $T_x\in C^*(G,P)$ in $\OO(G,P)$.

\begin{prop}\label{KMScrit}
There is a KMS$_{\ln d}$ state $\psi$ on $(\OO(G,P),\alpha)$ such that
\[
\psi(\bar T_x\bar T_y^*)=\delta_{x,y}e^{-\beta\theta(x)}.
\]
If $d$ does not divide $c$, then this is the only KMS state on $(\OO(G,P),\alpha)$. 
\end{prop}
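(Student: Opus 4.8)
The plan is to obtain existence as a limit of the states built in Theorem~\ref{KMSToe}, and uniqueness by a level-by-level reconstruction argument combined with an elementary number-theoretic input supplied by the hypothesis $d\nmid c$.

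For existence, I would take $\mu$ to be normalised Lebesgue measure on $\T$, so that its moments are $M_n(\mu)=\delta_{n,0}$. Feeding this into \eqref{psivsmoments} makes every integral with a nonzero exponent vanish, and a short computation gives $\psi_{\beta,\mu}(T_{b^t})=\delta_{t,0}$ for all $\beta>\ln d$; by Proposition~\ref{charKMS} this forces $\psi_{\beta,\mu}(T_xT_y^*)=\delta_{x,y}e^{-\beta\theta(x)}$. These numbers converge as $\beta\downarrow\ln d$ on the dense $*$-subalgebra $\lsp\{T_xT_y^*\}$, and since the $\psi_{\beta,\mu}$ are states the pointwise limit extends to a state $\omega$ of $C^*(G,P)$. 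The KMS relation \eqref{eq:KMScondition} passes to the limit, so $\omega$ is a KMS$_{\ln d}$ state; by Corollary~\ref{restonbeta}\eqref{factor2} it factors through $\OO(G,P)$ and yields the required $\psi$.

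For uniqueness, let $\phi$ be any KMS state of $\OO(G,P)$. Composing with the quotient map $C^*(G,P)\to\OO(G,P)$ produces a KMS$_\beta$ state of $C^*(G,P)$ that factors through $\OO(G,P)$, so Corollary~\ref{restonbeta}\eqref{factor2} forces $\beta=\ln d$. By Corollary~\ref{btsuffices} it now suffices to prove $\phi(\bar T_{b^t})=\delta_{t,0}$. The key tool is that in $\OO(G,P)$ each level gives a genuine Cuntz family: $\sum_{\sigma\in\Sigma_k}\bar T_\sigma\bar T_\sigma^*=1$ and $\bar T_\sigma^*\bar T_\tau=\delta_{\sigma,\tau}$. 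Expanding $\bar T_{b^t}=\sum_{\sigma\in\Sigma_k}\bar T_\sigma\bar T_\sigma^*\bar T_{b^t}$, applying $\phi$, and using the KMS condition to cycle the height-$k$ element $\bar T_\sigma$ gives $\phi(\bar T_{b^t})=\sum_{\sigma\in\Sigma_k}e^{-\ln d\,k}\phi(\bar T_\sigma^*\bar T_{b^t}\bar T_\sigma)$. Lemma~\ref{idsummands} shows $\bar T_\sigma^*\bar T_{b^t}\bar T_\sigma$ equals $\bar T_{b^{c^kd^{-k}t}}$ when $\stem(b^t\sigma)=\sigma$ and $0$ otherwise; since that condition depends only on $t$ and $k$, it holds for all $d^k$ stems at once or for none, giving either $\phi(\bar T_{b^t})=\phi(\bar T_{b^{c^kd^{-k}t}})$ (the factors $d^k$ and $e^{-\ln d\,k}$ cancelling) or $\phi(\bar T_{b^t})=0$.

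Finally I would use $d\nmid c$ to close the argument. Fix $t\neq0$ and choose a prime $p$ with $v_p(c)<v_p(d)$, where $v_p$ denotes the $p$-adic valuation; then $v_p(c^jd^{-j}t)=v_p(t)-j\bigl(v_p(d)-v_p(c)\bigr)\to-\infty$, so for some $j_0$ the number $c^{j_0}d^{-j_0}t$ fails to be an integer divisible by $d$. Taking $k=j_0+1$, the stem condition of Lemma~\ref{idsummands} fails for every $\sigma\in\Sigma_k$, and the reconstruction above forces $\phi(\bar T_{b^t})=0$. Hence $\phi(\bar T_{b^t})=\delta_{t,0}=\psi(\bar T_{b^t})$, and Corollary~\ref{btsuffices} gives $\phi=\psi$. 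I expect the main obstacle to be the reconstruction step: verifying cleanly that $\bar T_\sigma^*\bar T_{b^t}\bar T_\sigma$ collapses as claimed (where the stem calculus of Lemma~\ref{idsummands} must be combined with the level-$k$ Cuntz relations), together with the termination argument, which is precisely the point at which the hypothesis $d\nmid c$ is indispensable.
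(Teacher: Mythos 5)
Your proposal is correct and takes essentially the same route as the paper: existence via the states $\psi_{\beta,\mu}$ with $\mu$ Haar measure and a limit as $\beta\downarrow\ln d$, and uniqueness by expanding $\bar T_{b^t}$ over the level-$k$ Cuntz families $\{\bar T_\sigma:\sigma\in\Sigma_k\}$, invoking Lemma~\ref{idsummands}, and using $d\nmid c$ to force $\phi(\bar T_{b^t})=0$ for $t\neq 0$ (this is exactly the paper's Lemma~\ref{offdiag0}). Your $p$-adic valuation step is an equivalent rephrasing of the paper's $\gcd$-based argument, and your all-or-nothing dichotomy is the same reconstruction the paper phrases contrapositively, so the differences are purely cosmetic.
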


\begin{lemma}\label{offdiag0}
Suppose that $d$ does not divide $c$ and $\phi$ is a KMS$_{\ln d}$ state of the Cuntz system $(\OO(G,P),\alpha)$. Then $\phi(\bar T_{b^t})=0$ for all $t\not=0$.
\end{lemma}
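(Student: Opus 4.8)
The plan is to use that, in the Cuntz system $\OO(G,P)$, each Toeplitz-Cuntz family $\{\bar T_\sigma:\sigma\in\Sigma_k\}$ of Remark~\ref{r1} becomes a genuine Cuntz family. For $k=1$ this is precisely the relation defining $\OO(G,P)$, and for general $k\geq 1$ it follows by induction after factoring each $\sigma\in\Sigma_k$ as $w\sigma'$ with $w\in\Sigma_1$ and $\sigma'\in\Sigma_{k-1}$; thus $\sum_{\sigma\in\Sigma_k}\bar T_\sigma\bar T_\sigma^*=1$ for every $k\geq 1$. Inserting this resolution of the identity, I would write, for any fixed $k\geq 1$ and $t\in\N$,
\[
\phi(\bar T_{b^t})=\sum_{\sigma\in\Sigma_k}\phi\big(\bar T_{b^t}\bar T_\sigma\bar T_\sigma^*\big)=\sum_{\sigma\in\Sigma_k}\phi\big(\bar T_{b^t\sigma}\bar T_\sigma^*\big),
\]
which reduces everything to evaluating $\phi$ on the spanning elements $\bar T_{b^t\sigma}\bar T_\sigma^*$, in which both $b^t\sigma$ and $\sigma$ have height $k$.

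Next I would evaluate each summand with the KMS characterisation. Since $\theta(b^t\sigma)=\theta(\sigma)=k$, Proposition~\ref{charKMS} together with Lemma~\ref{lem:iains}\eqref{it:iains2} forces $\phi(\bar T_{b^t\sigma}\bar T_\sigma^*)=0$ unless $\stem(b^t\sigma)=\sigma$. By Lemma~\ref{idsummands} the equality $\stem(b^t\sigma)=\sigma$ holds for some (equivalently every) $\sigma\in\Sigma_k$ exactly when $d$ divides $c^jd^{-j}t$ for all $0\leq j<k$, and then $b^t\sigma=\sigma b^{c^kd^{-k}t}$, so $\sigma\leq b^t\sigma$ and Proposition~\ref{charKMS} gives $\phi(\bar T_{b^t\sigma}\bar T_\sigma^*)=e^{-\beta k}\phi(\bar T_{b^{c^kd^{-k}t}})$. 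Summing over the $d^k$ elements of $\Sigma_k$ and using $e^{-\beta}=d^{-1}$ yields a dichotomy: either $d\nmid c^jd^{-j}t$ for some $0\leq j<k$, in which case every summand vanishes and $\phi(\bar T_{b^t})=0$; or $d\mid c^jd^{-j}t$ for all $0\leq j<k$, in which case
\[
\phi(\bar T_{b^t})=d^k e^{-\beta k}\phi(\bar T_{b^{c^kd^{-k}t}})=\phi(\bar T_{b^{c^kd^{-k}t}}).
\]

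Finally I would invoke the hypothesis $d\nmid c$ to force the first alternative. Fix $t\in\N$ with $t\neq 0$ and let $j_0$ be the least $j\geq 0$ such that $d$ does not divide $c^jd^{-j}t$ (a non-integer being, in particular, not divisible by $d$). This $j_0$ is finite: choosing a prime $p$ with $v_p(d)>v_p(c)$, which exists precisely because $d$ does not divide $c$, the $p$-adic valuation $v_p(c^jd^{-j}t)=v_p(t)-j\big(v_p(d)-v_p(c)\big)$ tends to $-\infty$, so $c^jd^{-j}t\notin\Z$ for large $j$ and hence $d\nmid c^jd^{-j}t$. Taking $k=j_0+1$ in the dichotomy, the divisibility fails at $j=j_0<k$, so every summand vanishes and $\phi(\bar T_{b^t})=0$, as required. (Equivalently, one can iterate the case $k=1$: while $d\mid t$ replace $t$ by $ct/d$, leaving $\phi(\bar T_{b^t})$ unchanged, and note that the valuation estimate forces this process to reach a value not divisible by $d$ after finitely many steps.) The only place the hypothesis $d\nmid c$ enters, and the single genuinely new point, is this elementary valuation argument showing the chain of divisibilities must break; everything else is a routine assembly of the Cuntz relation, Proposition~\ref{charKMS}, and Lemma~\ref{idsummands}.
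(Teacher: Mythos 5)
Your proposal is correct and takes essentially the same route as the paper's proof: insert the level-$k$ Cuntz resolution of the identity, use Proposition~\ref{charKMS} and Lemma~\ref{idsummands} to see that a nonzero value of $\phi(\bar T_{b^t})$ forces the divisibility chain $d\mid c^jd^{-j}t$, and then use $d\nmid c$ to break that chain. The only differences are cosmetic: you argue directly (locating the first level $j_0$ where divisibility fails) rather than by contradiction, and you finish with a $p$-adic valuation estimate where the paper writes $d_1=d/\gcd(c,d)>1$ and deduces $d_1^{j+1}\mid t$ for all $j$, forcing $t=0$ --- the two finishing arguments are equivalent.
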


\begin{proof}
Suppose that $\phi(\bar T_{b^t})\not=0$.
Since $\{S_j:=\bar T_{b^ja}:0\leq j<d\}$ is a Cuntz family, so is 
\[
\{S_\mu:=S_{\mu_1}S_{\mu_2}\cdots S_{\mu_k}: 0\leq\mu_i<d\}=\{\bar T_{\sigma}:\sigma\in \Sigma_k\}.
\]
Thus for every $k$, 
\[
\phi(\bar T_{b^t})=\phi\Big(\bar T_{b^t}\sum_{\sigma\in \Sigma_k}\bar T_{\sigma}\bar T_{\sigma}^*\Big)=\sum_{\sigma\in \Sigma_k}\phi(\bar T_{b^t\sigma}\bar T_{\sigma}^*)
\]
and there exists $\sigma\in \Sigma_k$ such that $\phi(\bar T_{b^t\sigma}\bar T_{\sigma}^*)\not=0$. Proposition~\ref{charKMS} then implies that $b^t\sigma\vee \sigma<\infty$, and since $b^t\sigma$ and $\sigma$ have the same height, we must have $\stem(b^t\sigma)=\sigma$. Thus we can apply Lemma~\ref{idsummands} for every $k$, and deduce that $d$ divides $c^jd^{-j}t$ for all $j\geq 1$. Write $g:=\gcd(c,d)$, and define $c_1:=g^{-1}c$ and $d_1:=g^{-1}d$. Then $\gcd(c_1,d_1)=1$ and for all $j$ we have
\begin{align*}
d\text{ divides }c^jd^{-j}t&\Longrightarrow d_1\text{ divides }c_1^jd_1^{-j}t\\
&\Longrightarrow d_1\text{ divides }d_1^{-j}t\\
&\Longrightarrow d_1^{j+1}\text{ divides }t.
\end{align*} 
Since $d$ does not divide $c$, we have $d_1>1$, and hence $t=0$.
\end{proof}

\begin{proof}[Proof of Proposition~\ref{KMScrit}]
We choose a decreasing sequence $\{\beta_n\}$ such that $\beta_n\to \ln d$, and take $\mu$ to be the Haar measure on $\T$. Then by passing to a subsequence, we may assume that $\{\psi_{\beta_n,\mu}\}$ converges weak* to a state $\phi$ of $(C^*(G,P),\alpha)$. Then it follows from \cite[Proposition~5.3.23]{BR} that $\phi$ is a KMS$_{\ln d}$ state of $(C^*(G,P),\alpha)$. Corollary~\ref{restonbeta}  implies that $\phi$ factors through a state $\psi$ of $(\OO(G,P),\alpha)$. Since the non-zero moments of the Haar measure all vanish, we have $\psi(\bar T_{b^t})=0$ for all $t\not=0$, and then the formula for $\psi$ follows.

For uniqueness, we use Lemma~\ref{offdiag0} to see that any other KMS$_{\ln d}$ state agrees with $\psi$ on the elements $\bar T_{b^t}$, and hence by Corollary~\ref{btsuffices} on all of $\OO(G,P)$.
\end{proof}

\begin{ex}\label{ddividesc}
Suppose that $d$ divides $c$, that $\beta>\ln d$, and that $\mu\in P(\T)$. Then either $d$ divides $c^jd^{-j}t$ for all $j\geq 0$, or $d$ does not divide $t$. Thus
\[
\psi_{\beta,\mu}(T_{b^t})=\begin{cases}(1-e^{-\beta}d)\big(M_t(\mu)+\sum_{k=1}^\infty e^{-\beta k}d^kM_{c^kd^{-k}t}(\mu)\big)&\text{if $d$ divides $t$}\\
(1-e^{-\beta}d)M_t(\mu)&\text{if $d$ does not divide $t$.}
\end{cases}
\]

If $d=c$, then the only moment appearing in our formula is $M_t(\mu)$, and summing the geometric series shows that
\[
\psi_{\beta,\mu}(T_{b^t})=\begin{cases}M_t(\mu)&\text{if $d$ divides $t$}\\
(1-e^{-\beta}d)M_t(\mu)&\text{otherwise.}
\end{cases}
\]
Now the procedure in the proof of Proposition~\ref{KMScrit} gives a KMS$_{\ln d}$ state on $(\OO(G,P),\alpha)$, and measures with different moments $M_{dn}$ will give different states.

If $d\not=c$, then we write $\delta_z$ for the point mass at $z\in\T$, and take 
\[
\mu=\frac{1}{cd^{-1}}\sum_{w^{cd^{-1}}=1}\delta_w.
\]
Then $\mu$ is a probability measure with moments
\[
M_n(\mu)=\begin{cases}
1&\text{if $cd^{-1}$ divides $n$}\\
0&\text{otherwise.}
\end{cases}
\]
Thus
\[
\psi_{\beta,\mu}(T_{b^t})=\begin{cases}1&\text{if $cd^{-1}$ and $d$ divide $t$}\\
(1-e^{-\beta}d)&\text{if $cd^{-1}$ divides $t$ and $d$ does not}\\
0&\text{if $cd^{-1}$ does not divide $t$.}
\end{cases}
\]
If we now choose $\beta_n$ decreasing to $\ln d$ as in the proof of Proposition~\ref{KMScrit}, we get a KMS$_{\ln d}$ state $\psi_\mu$ on $(\OO(G,P),\alpha)$ such that
\[
\psi_{\mu}(\bar T_{b^t})=\begin{cases}1&\text{if $cd^{-1}$ and $d$ divide $t$}\\
0&\text{otherwise}.\end{cases}
\]

So uniqueness of the KMS$_{\ln d}$ state fails whenever $d$ divides $c$.
\end{ex}

\begin{ex}
The case $d=c$ is quite different. Then the unitary element $\bar T_{b^c}$ commutes with everything. It and the Cuntz family $\{\bar T_{b^ja}:0\leq j<c\}$ generate $\OO(G,P)$: indeed, $\bar T_a$ is a member of the Cuntz family, and we can recover $\bar T_b$ using the formula
\[ 
\sum_{j=0}^{c-2}\bar T_{b^{j+1}a}\bar T_{b^ja}^*+ \bar T_{b^c}\bar T_a\bar T_{b^{c-1}a}=\bar T_b\Big(\sum_{j=0}^{c-1}\bar T_{b^ja}\bar T_{b^ja}^*\Big)=\bar T_b.
\]
The Cuntz family gives us a homomorphism $\pi:\OO_c=C^*(S_j)\to \OO(G,P)$ such that $\pi(S_j)=\bar T_{b^ja}$, and the unitary $\bar T_{b^c}$ gives us a homomorphism $\rho:C(\T)\to \OO(G,P)$ such that $\rho(\iota)=\bar T_{b^c}$. Since they have commuting ranges, they induce a homomorphism $\pi\otimes \rho$ of $\OO_c\otimes C(\T)$ onto $\OO(G,P)$. This is in fact an isomorphism because the universal property of $\OO(G,P)$ gives an inverse.
\end{ex}

\begin{rmk}\label{sortleftright}
In the definition of the Baumslag-Solitar group $\BS(c,d)$, the integers $c$ and $d$ play an equal role. So at first sight it seems strange that the critical inverse temperature is dictated by $d$ alone. However, right at the beginning of his theory, Nica made a critical choice: his covariance relation was modelled on the behaviour of the \emph{left}-regular representation. If he had started with the right-regular representation, his theory would have looked quite different.

The right-regular representation $\rho$ of a group $G$ is characterised in terms of the usual basis $\{e_g:g\in G\}$ for $\ell^2(G)$ by $\rho_he_g=e_{gh^{-1}}$ (the inverse has to be there to ensure that $\rho_g\rho_h=\rho_{gh}$). So the natural representation of $P$ by operators $\{R_x:x\in P\}$ on $\ell^2(P)$ is given by
\[
R_xe_y=\begin{cases}0&\text{if $y\notin Px$}\\
e_{yx^{-1}}&\text{if $y\in Px$.}
\end{cases}
\]
Each $R_x$ is a \emph{coisometry}: $R_x^*$ is an isometry. In other words, $R_x$ is a partial isometry with initial projection $R_x^*R_x:\ell^2(P)\to \ell^2(Px)$ and range projection $R_xR_x^*=1$.

The analogue of Nica's partial order is the right-invariant order defined by $x\leq_{\rt} y\Longleftrightarrow y\in Px$ (and for the duration of this remark, we'll write $\leq_{\lt}$ for the usual left-invariant one). There is an analogous notion of ``right-quasi-lattice ordered'' involving least upper bounds $x\vee_{\rt}y$ with respect to $\leq_{\rt}$, and the analogue of Nica covariance is the relation
\begin{equation}\label{rtNica}
(R_x^*R_x)(R_y^*R_y)=\begin{cases}R_{x\vee_{\rt} y}^*R_{x\vee_{\rt} y}&\text{if $x\vee_{\rt}y<\infty$}\\
0&\text{if $x\vee_{\rt}y=\infty$,}
\end{cases}
\end{equation}
which is satisfied by the right-regular representation. One can then get a universal $C^*$-algebra, which we will denote by $C^*(G,P,\leq_{\rt})$.

Fortunately, there is a device for studying this $C^*$-algebra (for which we thank Ilija Tolich). Consider the opposite group $G^{\op}=\{g^\flat:g\in G\}$ with $g^\flat h^\flat=(hg)^\flat$, and the corresponding subsemigroup $P^\op$. Then the usual partial order $\leq_{\lt}$ on $G^\op$ satisfies
\[
g^\flat\leq_{\lt} h^\flat\Longleftrightarrow h^\flat\in g^\flat P^\op=(Pg)^\flat\Longleftrightarrow h\in Pg\Longleftrightarrow g\leq_{\rt} h;
\]
we deduce that $(G^\op,P^\op)$ is quasi-lattice ordered in the usual sense if and only if $(G,P,\leq_\rt)$ is right-quasi-lattice ordered, with $g^\flat\vee_{\lt} h^\flat =(g\vee_{\rt} h)^\flat$. One can check quite easily that $R^T:x\mapsto T_{x^\flat}^*$ is a covariant coisometric representation of $P$ in the sense that \eqref{rtNica} holds if and only if $T$ is a Nica-covariant representation of $P^{\op}$. Thus $(C^*(G^\op,P^\op), R^T)$ is universal for coisometric representations satisfying \eqref{rtNica}.

When $G=\BS(c,d)$, the opposite group is 
\[
G^\op=\langle a, b : b^ca = ab^d \rangle=\BS(d,c).
\]
Thus, had we chosen to work with the partial order $\leq_{\rt}$, we would have found a system with a phase transition at inverse temperature $\ln c$. (There is a minor wrinkle: because passing from the isometric representation $T$ to the coisometric representation $R^T$ involves an adjoint, the dynamics satisfies $\alpha^\rt_t(R_x)=e^{-i\theta(x)}R_x$. However, one can argue that this is the natural one, because both this and the usual dynamics are implemented spatially on $\ell^2(P)$ by the unitary representation $U$ such that $U_te_x=e^{it\theta(x)}e_x$.)
\end{rmk}

\section{Ground states}

A state $\phi$ is   a \emph{ground state} of $(C^*(G,P),\alpha)$ if, for all analytic elements $a$ and $b$,  the entire function $z\mapsto \phi(a\alpha_z(b))$ is bounded in the upper half-plane $\im z>0$. The KMS$_\infty$ states are the weak* limits of sequences of KMS$_{\beta_n}$ states as $\beta_n\to \infty$. Every KMS$_\infty$ state is a ground state, but a ground state need not be a KMS$_\infty$ state by \cite[Proposition~5.3.23]{BR} and \cite[Proposition~3.8]{CM}.

\begin{thm}\label{thmground}
Suppose that $\omega$ is a state of the Toeplitz algebra $\TT(\N)=C^*(S)$. Then there is a ground state $\psi_\omega$ of $(C^*(G,P),\alpha)$ such that
\begin{equation}\label{defground}
\psi_\omega(T_xT_y^*)=
\begin{cases}
0&\text{if $\theta(x)\not=0$ or $\theta(y)\not=0$}\\
\omega(S^sS^{*t})&\text{if $x=b^s$ and $y=b^t$.}
\end{cases}
\end{equation}
The state $\psi_\omega$ is a KMS$_\infty$ state if and only if $\omega$ factors through the quotient map $q:\TT(\N)\to C(\T)$. The map $\omega\mapsto \psi_\omega$ is an affine isomorphism of the state space of $\TT(\N)$ onto the compact convex set of ground states.
\end{thm}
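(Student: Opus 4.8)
The plan is to first show that being a ground state is equivalent to a simple vanishing condition, then to manufacture the states $\psi_\omega$ from explicit isometric representations built as in Section~\ref{sect:large}, and finally to read off the two classification statements.

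The key auxiliary claim I would establish is: a state $\phi$ of $(C^*(G,P),\alpha)$ is a ground state if and only if $\phi(T_xT_y^*)=0$ whenever $\theta(x)>0$ or $\theta(y)>0$ (equivalently, unless $x$ and $y$ are both powers of $b$). For the forward direction note that $T_x$ and $T_y^*$ are analytic with $\alpha_z(T_y^*)=e^{-iz\theta(y)}T_y^*$, so $z\mapsto\phi(T_x\alpha_z(T_y^*))=e^{-iz\theta(y)}\phi(T_xT_y^*)$; since $|e^{-iz\theta(y)}|=e^{\theta(y)\im z}$ is unbounded in the upper half-plane when $\theta(y)>0$, the ground-state condition forces $\phi(T_xT_y^*)=0$, and the case $\theta(x)>0$ follows by taking adjoints. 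For the converse I would decompose an analytic element $b$ into its gauge-spectral components $b=\sum_N b_N$ with $\alpha_z(b_N)=e^{izN}b_N$; Nica covariance together with the vanishing hypothesis gives $\phi(a b_N)=0$ for all $a$ and all $N<0$ (one checks $\phi(T_pT_q^*T_xT_y^*)=0$ when $\theta(x)<\theta(y)$, since there $T_pT_q^*T_xT_y^*=T_mT_n^*$ with $\theta(n)>0$), so $\phi(a\alpha_z(b))=\sum_{N\ge 0}e^{izN}\phi(ab_N)$ is an entire power series in $w=e^{iz}$ bounded by $\|a\|\,\|b\|$ on $|w|=1$, and the maximum-modulus principle bounds it on $\im z\ge 0$.

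To produce $\psi_\omega$ I would take the GNS triple $(\rho,H_\omega,\xi_\omega)$ of $\omega$, set $\Xi:=\rho(S)$, and mimic Proposition~\ref{indrepconst} with the non-unitary $\Xi^t$ in place of the unitary $W_{b^t}$ — legitimate because the powers of $b$ produced by the stem calculus are always non-negative. Thus on $\bigoplus_{k\ge 0}\ell^2(\Sigma_k)\otimes H_\omega$ I define
\[
U(e_{k,\sigma}\otimes h)=e_{k,\stem(b\sigma)}\otimes\Xi^t h\ \ (b\sigma=\stem(b\sigma)b^t),\qquad V(e_{k,\sigma}\otimes h)=e_{k+1,a\sigma}\otimes h.
\]
These are isometries by the stem bijections/injections of Lemma~\ref{maponstems}, and the verifications of \eqref{t1} and \eqref{t5} are formally identical to those in Proposition~\ref{indrepconst}. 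The one genuinely new point, and the step I expect to be the main obstacle, is relation \eqref{t4}: in Proposition~\ref{indrepconst} it was free via Remark~\ref{r2} because $U$ was unitary, but here $U$ is a proper isometry, so I must check $U^*V=U^{d-1}VU^{*c}$ by a direct computation on basis vectors. Tracking stems, both sides send $e_{k,\sigma}\otimes h$ to $e_{k+1,\,b^{d-1}a\tau}\otimes\Xi^{*w}h$, where $\tau\in\Sigma_k$ is the unique stem with $\stem(b^c\tau)=\sigma$ and $w$ is fixed by $b^c\tau=\sigma b^w$; the matching hinges precisely on the bijectivity in Lemma~\ref{maponstems}\eqref{hkm}. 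With the three relations in hand, Proposition~\ref{defrel} yields a representation $\pi$ of $C^*(G,P)$; putting $\eta:=e_{0,e}\otimes\xi_\omega$ and $\psi_\omega:=(\pi(\cdot)\eta\,|\,\eta)$ gives a state with $\psi_\omega(T_xT_y^*)=(\pi(T_y)^*\eta\,|\,\pi(T_x)^*\eta)$, which vanishes when $\theta(x)>0$ or $\theta(y)>0$ (as $\pi(T_y)^*\eta=0$ there) and equals $(\Xi^{*t}\xi_\omega\,|\,\Xi^{*s}\xi_\omega)=\omega(S^sS^{*t})$ when $x=b^s$, $y=b^t$. This is \eqref{defground}, and the auxiliary claim shows $\psi_\omega$ is a ground state.

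For the KMS$_\infty$ clause I would invoke Theorem~\ref{KMSToe}: every KMS$_{\beta_n}$ state is $\psi_{\beta_n,\mu_n}$, and as $\beta_n\to\infty$ (passing to a weak*-convergent subsequence $\mu_n\to\mu$) the factors $e^{-\beta_n k}d^k$ in \eqref{psivsmoments} kill all terms with $k\ge 1$, so any limit $\phi$ satisfies $\phi(T_{b^s}T_{b^t}^*)=\int_\T z^{s-t}\,d\mu$. Comparing with \eqref{defground}, $\phi=\psi_\omega$ forces $\omega(S^sS^{*t})=(\mu\circ q)(S^sS^{*t})$ for all $s,t$, so $\omega=\mu\circ q$ factors through $q$; conversely, running $\psi_{\beta_n,\mu}$ with $\mu$ fixed and $\beta_n\to\infty$ exhibits $\psi_{\mu\circ q}$ as a weak* limit, so it is KMS$_\infty$. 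Finally, for the isomorphism statement the map $\omega\mapsto\psi_\omega$ is affine and weak*-continuous straight from \eqref{defground}, and injective because \eqref{defground} recovers every $\omega(S^sS^{*t})$, which span a dense subalgebra of $\TT(\N)$. For surjectivity I would take an arbitrary ground state $\phi$; by the auxiliary claim it is ``supported'' on $C^*(T_b)\cong\TT(\N)=C^*(S)$ (the isomorphism sending $S\mapsto T_b$, a proper isometry in the faithful representation on $\ell^2(P)$), so transporting $\phi|_{C^*(T_b)}$ gives a state $\omega$ with $\omega(S^sS^{*t})=\phi(T_{b^s}T_{b^t}^*)$ and hence $\psi_\omega=\phi$. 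Since the auxiliary claim identifies the ground states as exactly the states annihilating the weak*-closed set $\{T_xT_y^*:\theta(x)>0\text{ or }\theta(y)>0\}$, they form a weak*-compact convex set, and a continuous affine bijection between compact convex sets is an affine homeomorphism.
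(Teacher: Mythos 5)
Your proposal is correct and follows essentially the same route as the paper's proof: the same vanishing characterisation of ground states (the paper's Lemma~\ref{idground}), the same isometries $U,V$ on $\bigoplus_{k\geq 0}\ell^2(\Sigma_k)\otimes H_\omega$ built from the GNS isometry $\rho(S)$ --- including the correct identification that relation \eqref{t4} is the genuinely new verification, carried out via the adjoint formula for $U^*$ and the bijectivity in Lemma~\ref{maponstems}\eqref{hkm}, exactly as in the paper --- the same vector state at $e_{0,e}\otimes\xi_\omega$, and Coburn's theorem for surjectivity onto the set of ground states.

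There are two local variations, both sound. For the converse of the ground-state criterion you use the gauge-spectral decomposition of an analytic element together with the maximum-modulus principle, where the paper uses a one-line Cauchy--Schwarz estimate: $|\psi(Y\alpha_{r+is}(T_xT_y^*))|^2\leq e^{-2s(\theta(x)-\theta(y))}\psi(Y^*Y)\psi(T_yT_y^*)$, which is bounded for $s\geq 0$ because either $\psi(T_yT_y^*)=0$ or $\theta(y)=0$ and then $e^{-2s\theta(x)}\leq 1$ since $\theta\geq 0$ on $P$. For the implication that $\psi_\omega$ KMS$_\infty$ forces $\omega$ to factor through $q$, you invoke the exhaustiveness clause of Theorem~\ref{KMSToe} (every KMS$_{\beta_n}$ state is some $\psi_{\beta_n,\mu_n}$) together with weak* compactness of $P(\T)$, whereas the paper argues more economically from Corollary~\ref{restonbeta}: every KMS$_{\beta_n}$ state kills $1-T_bT_b^*$, hence so does the weak* limit $\psi_\omega$, and since $\ker q$ is spanned by the matrix units $S^m(1-SS^*)S^{*n}$, formula \eqref{defground} gives $\omega(S^m(1-SS^*)S^{*n})=\psi_\omega(T_{b^m}(1-T_bT_b^*)T_{b^n}^*)=0$. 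Your version needs the full classification of KMS$_\beta$ states as an input; the paper's needs only the elementary Corollary~\ref{restonbeta}\eqref{factor1}. Either way the conclusion and all remaining steps agree with the paper.
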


There are many states of $\TT(\N)$ which do not factor through $q:\TT(\N)\to C(\T)$: for example, the vector states given by unit vectors in $\ell^2$. Thus Theorem~\ref{thmground} implies that the system $(C^*(G,P),\alpha)$ has many ground states which are not KMS$_\infty$ states. Thus (in the terminology of \cite{CM}) the system admits a second phase transition at $\beta=\infty$.

We now do some preparation for the the proof of Theorem~\ref{thmground}. First we need to be able to recognise ground states.

\begin{lemma}\label{idground}
A state $\psi$ of $(C^*(G,P),\alpha)$ is a ground state if and only if
\begin{equation}\label{charground}
\psi(T_xT_y^*)\not=0\Longrightarrow \theta(x)=\theta(y)=0.
\end{equation}
\end{lemma}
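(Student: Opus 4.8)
The plan is to read off ground states from the frequencies of the spanning elements. Recall from \S\ref{sect:char} that each $T_xT_y^*$ is analytic with $\alpha_z(T_xT_y^*)=e^{iz(\theta(x)-\theta(y))}T_xT_y^*$, so it is an eigenvector of ``energy'' $\theta(x)-\theta(y)$; in particular $T_x$ has energy $\theta(x)$ and $T_y^*$ has energy $-\theta(y)$.

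For the forward implication I would argue directly. Suppose $\psi$ is a ground state and $\psi(T_xT_y^*)\neq0$. Applying the defining boundedness condition to the analytic elements $a=T_x$ and $b=T_y^*$ gives the entire function $z\mapsto\psi(T_x\alpha_z(T_y^*))=e^{-iz\theta(y)}\psi(T_xT_y^*)$, whose modulus on the line $z=it$ is $e^{t\theta(y)}|\psi(T_xT_y^*)|$. Since $\psi(T_xT_y^*)\neq0$, boundedness for $\im z>0$ forces $\theta(y)\leq0$, hence $\theta(y)=0$. Running the same argument with $a=T_y$, $b=T_x^*$ and $\psi(T_yT_x^*)=\overline{\psi(T_xT_y^*)}\neq0$ gives $\theta(x)=0$, which is \eqref{charground}.

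For the converse I would first establish the pointwise vanishing $\psi(T_pT_q^*T_xT_y^*)=0$ whenever $\theta(x)<\theta(y)$. Using Nica covariance \eqref{altNica} to rewrite $T_q^*T_x$, either $q\vee x=\infty$ and the product is already $0$, or $T_pT_q^*T_xT_y^*=T_{P'}T_{Q'}^*$ with $Q'=yx^{-1}(q\vee x)$; since $q\vee x\geq x$ we have $\theta(Q')=\theta(y)-\theta(x)+\theta(q\vee x)\geq\theta(y)>0$, so \eqref{charground} forces $\psi(T_{P'}T_{Q'}^*)=0$. As $c\mapsto\psi(c\,T_xT_y^*)$ is a bounded functional vanishing on the dense span of the $T_pT_q^*$, it vanishes identically, so $\psi(a\,T_xT_y^*)=0$ for every $a\in C^*(G,P)$ whenever $\theta(x)<\theta(y)$. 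Finally I would pass to a general analytic $b$ by its gauge-spectral components $b_n:=\int_\T\bar z^{\,n}\gamma_z(b)\,dz$: each $b_n$ lies in $\clsp\{T_xT_y^*:\theta(x)-\theta(y)=n\}$, one has $\alpha_z(b)=\sum_n e^{izn}b_n$, and $\sum_n\|b_n\|<\infty$. The previous step gives $\psi(ab_n)=0$ for $n<0$, so for analytic $a$ and $\im z=t>0$, using $|e^{izn}|=e^{-tn}\leq1$ for $n\geq0$, $|\psi(a\alpha_z(b))|=\big|\sum_{n\geq0}e^{izn}\psi(ab_n)\big|\leq\|a\|\sum_{n\geq0}\|b_n\|<\infty$, and $\psi$ is a ground state.

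The only real obstacle is the input used in the last step: that an arbitrary analytic element decomposes as $b=\sum_n b_n$ with $\alpha_z(b)=\sum_n e^{izn}b_n$ and $\sum_n\|b_n\|<\infty$. This is the one place where analyticity is needed beyond the algebra of spanning elements, and it follows from the standard contour-shift estimate $\|b_n\|\leq e^{ns}\sup_{\theta}\|\alpha_{\theta+is}(b)\|$ (valid for every real $s$), which forces faster-than-exponential decay of the $b_n$. Everything else is the short Nica-covariance computation above.
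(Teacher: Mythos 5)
Your proof is correct. The forward implication is essentially word-for-word the paper's argument (evolve $T_y^*$, note the modulus $e^{s\theta(y)}|\psi(T_xT_y^*)|$ is unbounded on the upper half-plane unless $\theta(y)=0$, then take adjoints). The converse, however, takes a genuinely different route. The paper's converse is a one-line Cauchy--Schwarz estimate: for analytic $Y$ and a spanning element $X=T_xT_y^*$,
\[
|\psi(Y\alpha_{r+is}(X))|^2 \le e^{-2s(\theta(x)-\theta(y))}\,\psi(Y^*Y)\,\psi(T_yT_y^*),
\]
so \eqref{charground} kills the right-hand side unless $\theta(y)=0$, in which case the exponential factor is at most $1$ (since $\theta(x)\ge 0$ on $P$). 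This verifies the ground-state bound only when the evolved element is a spanning element; the passage to arbitrary analytic elements is left implicit, resting on the standard fact that the ground-state condition may be checked on analytic elements spanning a dense subspace (equivalently, on a core for the generator of $\alpha$). Your proof makes exactly that passage explicit: you first obtain $\psi(a\,T_xT_y^*)=0$ for all $a\in C^*(G,P)$ whenever $\theta(x)<\theta(y)$ (via Nica covariance plus density), and then decompose an arbitrary analytic $b$ into its gauge-spectral components $b_n$, using the contour-shift bound $\|b_n\|\le e^{ns}\sup_\theta\|\alpha_{\theta+is}(b)\|$ (valid for every real $s$, by periodicity and entirety of $z\mapsto\alpha_z(b)$) to get faster-than-exponential decay, which justifies $\alpha_z(b)=\sum_n e^{izn}b_n$ and the term-by-term estimate. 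What each approach buys: the paper's is much shorter and avoids Fourier analysis entirely --- indeed your first step could be shortened the same way, since $|\psi(a\,T_xT_y^*)|^2\le\psi(aa^*)\psi(T_yT_x^*T_xT_y^*)=\psi(aa^*)\psi(T_yT_y^*)=0$ when $\theta(y)\neq 0$, with no case analysis on $q\vee x$ --- whereas your argument is self-contained and actually supplies the density-to-analytic reduction that the paper takes for granted, so it establishes the stated equivalence with no unstated appeal to the general theory of ground states.
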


\begin{proof}
Suppose that $\psi$ is a ground state and $\psi(T_xT_y^*)\not=0$. Then
\[
\big|\psi(T_x\alpha_{r+is}(T_y^*))\big|=\big| e^{-i(r+is)\theta(y)}\psi(T_xT_y^*)\big|=e^{s\theta(y)}|\psi(T_xT_y^*)|
\]
is bounded on the upper half-plane $s\geq 0$, and hence $\theta(y)=0$.  Since $\psi(T_yT_x^*)=\overline{\psi(T_xT_y^*)}\not=0$, we also deduce that $\theta(x)=0$.

Next suppose that $\psi$ is a state satisfying \eqref{charground}. Let $X=T_xT_y^*$ and $Y$ be analytic elements for $\alpha$. Then the Cauchy-Schwarz inequality gives
\begin{align}\label{CSground}
|\psi(Y\alpha_{r+is}(X))|^2&=e^{-2s(\theta(x)-\theta(y))}|\psi(YT_xT_y^*)|^2\\
&\leq e^{-2s(\theta(x)-\theta(y))}\psi(Y^*Y)\psi(T_yT_x^*T_xT_y^*)\notag\\
&= e^{-2s(\theta(x)-\theta(y))}\psi(Y^*Y)\psi(T_yT_y^*).\notag
\end{align}
If $\theta(y)\not= 0$ then \eqref{charground} implies that $\psi(T_yT_y^*)=0$, and the right-hand side of \eqref{CSground} is trivially bounded. If $\theta(y)=0$, then the right-hand side of \eqref{CSground} is bounded by $\psi(Y^*Y)\psi(T_yT_y^*)$. So either way, $|\psi(Y\alpha_{r+is}(X))|$ is bounded for $s\geq 0$, and $\psi$ is a ground state.
\end{proof}

Next we need a good supply of representations. Our basic construction was inspired by our earlier one using induced representations. 

We continue to use the orthonormal basis $\{e_{k,\sigma}:\sigma\in \Sigma_k\}$ for $\ell^2(\Sigma_k)$.

\begin{lemma}\label{defUV}
Suppose that $W$ is an isometry of a Hilbert space $H$. Then there are isometries $U$ and $V$ on $\bigoplus_{k=0}\ell^2(\Sigma_k)\otimes H$ such that
\begin{align*}
U(e_{k,\sigma}\otimes h)&=e_{k,\stem(b\sigma)}\otimes W^sh\quad\text{where $b\sigma=\stem(b\sigma)b^s$, and }\\
V(e_{k,\sigma}\otimes h)&=e_{k+1,a\sigma}\otimes h.
\end{align*}
\end{lemma}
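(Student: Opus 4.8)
The plan is to define $U$ and $V$ on the orthonormal basis vectors $e_{k,\sigma}\otimes f$ (with $\sigma$ ranging over $\Sigma_k$ and $f$ over a fixed orthonormal basis of $H$) by the stated formulas, and then to verify that each prescription extends to an isometry of $\bigoplus_{k\geq 0}\ell^2(\Sigma_k)\otimes H$ by checking that it carries this orthonormal basis to an orthonormal set. For $V$, I would first observe that prepending $a$ sends a stem $\sigma=b^{t_0}a\cdots b^{t_{k-1}}a\in\Sigma_k$ to $a\sigma=b^0ab^{t_0}a\cdots b^{t_{k-1}}a\in\Sigma_{k+1}$, and that the map $\sigma\mapsto a\sigma$ is injective since $\sigma$ is recovered by deleting the leading $a$. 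Hence it induces an isometry $\ell^2(\Sigma_k)\to\ell^2(\Sigma_{k+1})$, and tensoring with the identity gives an isometry $V_k$ on each summand. Because the range of $V_k$ sits in $\ell^2(\Sigma_{k+1})\otimes H$, the ranges for different $k$ are mutually orthogonal, so $V=\bigoplus_k V_k$ is an isometry.

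For $U$ I would factor the prescription into a combinatorial part and an analytic part. Since $\theta(b)=0$, each $b\sigma$ has height $k$, so $\stem(b\sigma)\in\Sigma_k$ and $U$ preserves every summand. By Lemma~\ref{maponstems}\eqref{hkm} (with $m=1$) the map $\sigma\mapsto\stem(b\sigma)=h_{k,1}(\sigma)$ is a bijection of $\Sigma_k$ onto itself, so $e_{k,\sigma}\otimes h\mapsto e_{k,\stem(b\sigma)}\otimes h$ defines a permutation unitary $\Phi$. Writing $b\sigma=\stem(b\sigma)b^s$, I would then define $D$ by $e_{k,\sigma}\otimes h\mapsto e_{k,\sigma}\otimes W^s h$; this is block diagonal in the index $(k,\sigma)$, acting as $W^s$ on the copy of $H$ labelled by $(k,\sigma)$, so it is an isometry because each $W^s$ is. Then $U=\Phi D$ reproduces the required formula and is an isometry as a product of an isometry with a unitary.

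The one step that genuinely uses the hypotheses -- and the place I expect to need the most care -- is the $U$ construction, because here $W$ is only an isometry rather than the unitary that appeared in Proposition~\ref{indrepconst}. Thus $W^s$ is guaranteed to be isometric only for $s\geq 0$, and it is essential that the exponent $s$ coming from $b\sigma=\stem(b\sigma)b^s$ is non-negative; this holds because $b\sigma$ lies in the semigroup $P$, so its normal form has non-negative powers of $b$. The factorisation $U=\Phi D$ has the advantage of cleanly separating the combinatorial content (bijectivity from Lemma~\ref{maponstems}\eqref{hkm}) from the analytic content (isometry of $W^s$), so that once non-negativity of $s$ is noted, no further estimates are required.
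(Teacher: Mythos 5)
Your proposal is correct and takes essentially the same route as the paper: both arguments rest on Lemma~\ref{maponstems}\eqref{hkm} (bijectivity of $\sigma\mapsto\stem(b\sigma)$ on $\Sigma_k$), the injectivity of $\sigma\mapsto a\sigma$ into $\Sigma_{k+1}$, and the fact that $W^s$ is isometric for $s\geq 0$, with the paper simply verifying directly that each prescription carries an orthonormal basis to an orthonormal set. Your factorisation $U=\Phi D$ is only a repackaging of that same check, and your explicit remark that $s\geq 0$ (because $b\sigma\in P$) makes precise a point the paper leaves implicit.
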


\begin{proof}
Lemma~\ref{maponstems} implies that $\sigma\mapsto \stem(b\sigma)$ is a bijection of $\Sigma_k$ onto $\Sigma_k$. Thus if $\{h_i:i\in I\}$ is an orthonormal basis for $H$, then $\{e_{k,\stem(b\sigma)}\otimes W^sh_i:\sigma\in \Sigma,\,i\in I\}$ is an orthonormal set in $\ell^2(\Sigma_k)\otimes H$ for each $k$. Thus there is an isometry $U$ as claimed. Since each $a\sigma$ is already a stem, Lemma~\ref{maponstems} also implies that $\sigma\mapsto a\sigma$ is an injection of $\Sigma_k$ in $\Sigma_{k+1}$ for each $k$. Thus $\{e_{k+1,a\sigma}\otimes h_i\}$ is also orthonormal, and there is an isometry $V$ with the required property.
\end{proof}

\begin{prop}
Suppose that $W$ is an isometry of a Hilbert space $H$, and $U$, $V$ are as in Lemma~\ref{defUV}. Then $U$ and $V$ satisfy the relations \eqref{t1}, \eqref{t4} and \eqref{t5} of Proposition~\ref{defrel}.
\end{prop}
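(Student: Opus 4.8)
The plan is to verify each of \eqref{t1}, \eqref{t4} and \eqref{t5} by evaluating both sides on the basis vectors $e_{k,\sigma}\otimes h$. The first step is to record the general power formula
\[
U^m(e_{k,\sigma}\otimes h)=e_{k,\stem(b^m\sigma)}\otimes W^sh\quad\text{where }b^m\sigma=\stem(b^m\sigma)b^s,
\]
which follows by induction on $m$ exactly as in the proof of Proposition~\ref{indrepconst}, using Lemma~\ref{forlisa} to combine successive stems and carries. The exponents simply add, since $W^{s'}W^s=W^{s'+s}$, so it is irrelevant that $W$ is now only an isometry rather than a unitary.

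With this formula in hand, the relations \eqref{t1} and \eqref{t5} are proved precisely as in Proposition~\ref{indrepconst}. For \eqref{t1} one checks that both $VU^c$ and $U^dV$ send $e_{k,\sigma}\otimes h$ to $e_{k+1,a\stem(b^c\sigma)}\otimes W^sh$, using the identity $a\stem(b^c\sigma)=\stem(b^da\sigma)$ from Lemma~\ref{forlisa} together with the equality of the two carries. For \eqref{t5} one observes that $U^jV(e_{k,\sigma}\otimes h)=e_{k+1,b^ja\sigma}\otimes W^th$ with $b^ja\sigma$ a stem that is distinct from $a\sigma$ when $1\leq j<d$, hence orthogonal to the range of $V$, so that $V^*U^jV$ kills every basis vector. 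Neither argument is affected by replacing the unitaries $W_{b^t}$ of Proposition~\ref{indrepconst} by powers of the isometry $W$.

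The relation \eqref{t4} is the one that requires genuinely new work. In Proposition~\ref{indrepconst} it was deduced from \eqref{t1} using that $U$ is unitary (Remark~\ref{r2}); here $U$ is only an isometry, so I would verify \eqref{t4} directly. For this I first need the adjoints. Since $\sigma\mapsto\stem(b\sigma)$ and $\sigma\mapsto\stem(b^c\sigma)$ are bijections of $\Sigma_k$ by Lemma~\ref{maponstems}\eqref{hkm}, we have
\[
U^{*c}(e_{k,\sigma}\otimes h)=e_{k,\rho'}\otimes W^{*s}h,
\]
where $\rho'$ is the unique stem with $\stem(b^c\rho')=\sigma$ and $s$ is the carry determined by $b^c\rho'=\sigma b^s$, with an analogous description of $U^*$.

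Now I compute both sides on $e_{k,\sigma}\otimes h$. On the left, $V$ produces $e_{k+1,a\sigma}\otimes h$, and applying $U^*$ calls for the stem $\rho$ with $\stem(b\rho)=a\sigma$; because $a\sigma$ begins with $b^0a$, a carry of $d-1$ is forced, so $\rho=b^{d-1}a\rho'$ with $b\rho=ab^c\rho'=a\sigma b^s$, whence $\rho'$ and $s$ are exactly as above and $U^*V(e_{k,\sigma}\otimes h)=e_{k+1,b^{d-1}a\rho'}\otimes W^{*s}h$. On the right, $U^{*c}$ gives $e_{k,\rho'}\otimes W^{*s}h$, then $V$ gives $e_{k+1,a\rho'}\otimes W^{*s}h$, and finally $U^{d-1}$ acts; the decisive point is that $b^{d-1}a\rho'$ is already a stem (since $d-1<d$ there is no carry), so $U^{d-1}$ merely relabels the index and leaves the $H$-component untouched, again yielding $e_{k+1,b^{d-1}a\rho'}\otimes W^{*s}h$. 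The two sides agree, proving \eqref{t4}. The main obstacle is precisely this handling of the adjoints $U^*$ and $U^{*c}$: one must track the carries through the non-unitary $W$ and observe the crucial no-carry fact for $b^{d-1}a\rho'$, which is exactly what makes the $W^{*s}$ factors match on both sides.
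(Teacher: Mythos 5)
Your proposal is correct and takes essentially the same route as the paper's proof: relations \eqref{t1} and \eqref{t5} are inherited verbatim from the computations in the proof of Proposition~\ref{indrepconst}, and \eqref{t4} is verified directly by establishing the adjoint formulas for $U^*$ and $U^{*c}$ (via the bijections of Lemma~\ref{maponstems}\eqref{hkm}) and evaluating both sides on the vectors $e_{k,\sigma}\otimes h$. Your identification of the stem $\rho=b^{d-1}a\rho'$ by the forced carry is the same key step the paper carries out via the identity $b(b^{d-1}a\mu)=ab^c\mu=a\sigma b^t$ and uniqueness, including the decisive observation that $b^{d-1}a\rho'$ is already a stem so the $W^{*}$-exponents match.
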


\begin{proof}
The calculations in the fourth paragraph of the proof of Proposition~\ref{indrepconst} show that $U$ and $V$ satisfy \eqref{t1} and \eqref{t5}. To verify \eqref{t4}, we need a formula for $U^*$. We claim that 
\begin{equation}\label{formU*}
U^*(e_{k,\tau}\otimes h)=e_{k,\rho}\otimes W^{*t}h \quad\text{where $\rho\in \Sigma_k$ satisfies $\tau b^t=b\rho$;}
\end{equation}
Lemma~\ref{maponstems} implies that there is a unique stem $\rho$ such that $b\rho$ begins with $\tau$, and then $t$ is uniquely determined by $\tau b^t=b\rho$. To prove the claim, we compare
\begin{equation}\label{lhs}
\big(e_{k,\rho}\otimes W^{*t}h\,|\,e_{k,\sigma}\otimes g\big)=\delta_{\rho,\sigma}(h\,|\,W^tg)\quad\text{where $\tau b^t=b\rho$}
\end{equation}
with 
\begin{equation}\label{rhs}
\big(e_{k,\tau}\otimes h\,|\,U(e_{k,\sigma}\otimes g)\big)
=\delta_{\tau,\stem(b\sigma)}(h\,|\,W^sg)\quad\text{where $b\sigma=\stem(b\sigma)b^s$.}
\end{equation}
First, suppose that $\rho=\sigma$. Then $b\sigma=\stem(b\sigma)b^s=\stem(b\rho)b^s=\stem(\tau b^t)b^s=\tau b^s$ because $\tau$ is a stem. Thus $\tau=\stem(b\sigma)$. Now $\tau b^t=b\rho=b\sigma=\stem(b\sigma)b^s=\tau b^s$, and hence $s=t$. Thus \eqref{lhs} and \eqref{rhs} agree.
Second, suppose that $\rho\neq \sigma$. By Lemma~\ref{maponstems}
\eqref{hkm}, $x\mapsto \stem(bx)$ is a bijection on $\Sigma_k$, and hence $\stem(b\sigma)\neq\stem(b\rho)=\stem(\tau b^t)=\tau$, and both \eqref{lhs} and \eqref{rhs} are $0$. This proves the claim.

We now compute the right-hand side of \eqref{t4}:
\begin{align*}
U^{d-1}VU^{*c}(e_{k,\sigma}\otimes h)&=U^{d-1}V(e_{k,\mu}\otimes W^{*t}h)\quad\text{where $\mu\in \Sigma_k$ satisfies $\sigma b^t=b^c\mu$}\\
&=e_{k+1,\stem(b^{d-1}a\mu)}\otimes W^sW^{*t}h\quad\text{where $b^{d-1}a\mu=\stem(b^{d-1}a\mu)b^s$}\\
&=e_{k+1,b^{d-1}a\mu}\otimes W^{*t}h
\end{align*}
because $b^{d-1}a\mu$ is a stem. The left-hand side of \eqref{t4} is
\[
U^*V(e_{k,\sigma}\otimes h)=e_{k+1,\rho}\otimes W^{*r}h \quad\text{where $\rho\in \Sigma_{k+1}$ satisfies $a\sigma b^r=b\rho$.} 
\]
Now the equation
\[
b(b^{d-1}a\mu)=b^da\mu=ab^c\mu=a\sigma b^t
\]
implies that $\rho=b^{d-1}a\mu$ (because $\rho$ is the unique stem such that $b\rho$ begins with $a\sigma$) and then $r=t$. Thus \eqref{t4} follows.
\end{proof}

\begin{cor}\label{absground}
Suppose that $W$ is an isometry on a Hilbert space $H$, and $U$, $V$ are the isometries described in Lemma~\ref{defUV}. Let $\pi_{U,V}$ be the corresponding representation of $C^*(G,P)$ on $\bigoplus_{k\geq 0}\ell^2(\Sigma_k)\otimes H$. Then for every unit vector $h$ in $H$, there is a ground state  $\psi_{h,W}$ of $(C^*(G,P),\alpha)$ such that
\[
\psi_{h,W}(a)=\big(\pi_{U,V}(a)(e_{0,e}\otimes h)\,|\,e_{0,e}\otimes h\big).
\]
\end{cor}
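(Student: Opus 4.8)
The plan is to take $\psi_{h,W}$ to be exactly the vector state determined by the unit vector $e_{0,e}\otimes h$, so that the displayed formula holds by definition, and then to verify that this state is a ground state by checking the criterion in Lemma~\ref{idground}. That $\psi_{h,W}$ is a state is immediate: the vector $e_{0,e}\otimes h$ has norm $\|e_{0,e}\|\,\|h\|=1$, and $a\mapsto(\pi_{U,V}(a)\xi\,|\,\xi)$ is a positive unital linear functional whenever $\xi$ is a unit vector and $\pi_{U,V}$ is a $*$-representation. So the entire content lies in the ground-state property.

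The heart of the argument is a grading observation entirely parallel to the one used in the proof of Proposition~\ref{abstractKMSconst}. By construction $U=\pi_{U,V}(T_b)$ preserves each summand $\ell^2(\Sigma_k)\otimes H$, while $V=\pi_{U,V}(T_a)$ maps $\ell^2(\Sigma_k)\otimes H$ into $\ell^2(\Sigma_{k+1})\otimes H$. Writing an arbitrary $x\in P$ in normal form and using $\pi_{U,V}(T_x)=U^{s_0}VU^{s_1}V\cdots$, I would observe that $\pi_{U,V}(T_x)$ carries $\ell^2(\Sigma_k)\otimes H$ into $\ell^2(\Sigma_{k+\theta(x)})\otimes H$. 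Consequently the adjoint $\pi_{U,V}(T_x)^*$ sends $\ell^2(\Sigma_0)\otimes H$ into $\ell^2(\Sigma_{-\theta(x)})\otimes H$, which is $\{0\}$ unless $\theta(x)=0$.

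To finish, I would rewrite
\[
\psi_{h,W}(T_xT_y^*)=\big(\pi_{U,V}(T_y)^*(e_{0,e}\otimes h)\,\big|\,\pi_{U,V}(T_x)^*(e_{0,e}\otimes h)\big).
\]
Since $e_{0,e}\otimes h$ lies in $\ell^2(\Sigma_0)\otimes H$, the grading observation shows that the two factors vanish unless $\theta(y)=0$ and $\theta(x)=0$ respectively. Hence $\psi_{h,W}(T_xT_y^*)\neq 0$ forces $\theta(x)=\theta(y)=0$, which is precisely condition \eqref{charground}, and Lemma~\ref{idground} then gives that $\psi_{h,W}$ is a ground state.

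There is no serious obstacle here: the existence of the representation $\pi_{U,V}$ is already supplied by the preceding proposition together with Proposition~\ref{defrel}, and the only real input is the bookkeeping of the $\Sigma_k$-grading. The one point worth noting is that this grading argument uses only how $\pi_{U,V}$ acts on the generators, not any finer properties of $W$; in particular it is harmless that $W$ is merely an isometry here rather than the unitary of Proposition~\ref{indrepconst}.
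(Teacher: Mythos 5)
Your proof is correct and is essentially identical to the paper's: the paper also defines $\psi_{h,W}$ as the vector state at $e_{0,e}\otimes h$, notes that $\pi_{U,V}(T_x)$ maps $\ell^2(\Sigma_0)\otimes H$ into $\ell^2(\Sigma_{\theta(x)})\otimes H$ so that $\psi_{h,W}(T_xT_y^*)$ vanishes unless $\theta(x)=0=\theta(y)$, and then invokes Lemma~\ref{idground}. Your write-up just spells out the grading bookkeeping in slightly more detail.
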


\begin{proof}
Since $T_x$ maps $\ell^2(\Sigma_0)\otimes H$ into $\ell^2(\Sigma_{\theta(x)})\otimes H$, 
\[
\psi_{h,W}(T_xT_y^*)=\big(\pi_{U,V}(T_y)^*(e_{0,e}\otimes h)\,|\,\pi_{U,V}(T_x)^*(e_{0,e}\otimes h)\big)
\]
vanishes unless $\theta(x)=0=\theta(y)$. So Lemma~\ref{idground} implies that $\psi_{h,W}$ is a ground state.
\end{proof}

\begin{proof}[Proof of Theorem~\ref{thmground}]
Suppose $\omega$ is a state of $\TT(\N)=C^*(S)$. We consider the GNS representation $\pi_\omega$ of $\TT(\N)$ on $H_\omega$ with cyclic vector $\xi_\omega$, from which we can recover $\omega$ via the formula $\omega(c)=(\pi_\omega(c)\xi_\omega\,|\,\xi_\omega)$. Applying Corollary~\ref{absground} with $W=\pi_\omega(S)$, $U$ and $V$ the isometries of Lemma~\ref{defUV},  and $h=\xi_\omega$ gives a ground state $\psi_\omega:=\psi_{\xi_\omega, \pi_\omega(S)}$ of $(C^*(G,P),\alpha)$ such that 
\[
\psi_\omega(a)=\big(\pi_{U,V}(a)(e_{0,e}\otimes \xi_\omega)\,|\,e_{0,e}\otimes \xi_\omega\big).
\]
We need to verify the formula \eqref{defground}.

Since $V=\pi_{U,V}(T_a)$ maps $\ell^2(\Sigma_0)\otimes H_\omega=\C e_{0,e}\otimes H_\omega$ into $\ell^2(\Sigma_1)\otimes H_\omega$, we have
\[
\psi_\omega(T_xT_y^*)=0 \text{\ \ unless $\theta(x)=0=\theta(y)$.}
\]
If $\theta(x)=0=\theta(y)$, then $x=b^s$ and $y=b^t$ for some $s,t\in \N$, and
\begin{align*}
\psi_\omega(T_xT_y^*)&=\psi_\omega(T_b^sT_b^{*t})\\
&=\big(\pi_{U,V}(T_b^sT_b^{*t})(e_{0,e}\otimes \xi_\omega)\,|\,e_{0,e}\otimes \xi_\omega\big)\\
&=\big(U^sU^{*t}(e_{0,e}\otimes \xi_\omega)\,|\,e_{0,e}\otimes \xi_\omega\big)\\
&=\big(U^{*t}(e_{0,e}\otimes \xi_\omega)\,|\,U^{*s}(e_{0,e}\otimes \xi_\omega)\big).
\end{align*}
Since $\Sigma_0=\{e\}$, the formula \eqref{formU*} for $U^*$ collapses to $U^*(e_{0,e}\otimes h)=e_{0,e}\otimes W^*h$, and we have
\begin{align*}
\psi_\omega(T_xT_y^*)&=\big(e_{0,e}\otimes \pi_{\omega}(S)^{*t}\xi_\omega\,|\,e_{0,e}\otimes \pi_{\omega}(S)^{*s}\xi_\omega\big)\\
&=\big(\pi_{\omega}(S)^{*t}\xi_\omega\,|\,\pi_{\omega}(S)^{*s}\xi_\omega\big)\\
&=\big(\pi_{\omega}(S^sS^{*t})\xi_\omega\,|\,\xi_\omega\big)\\
&=\omega(S^sS^{*t}),
\end{align*}
as in \eqref{defground}.

Next we suppose that $\psi_\omega$ is a KMS$_\infty$ state. Then there are an increasing sequence $\beta_n\to \infty$ and KMS$_{\beta_n}$ states $\phi_n$ such that $\phi_n$ converges weak* to $\psi_\omega$. Corollary~\ref{restonbeta} implies that each $\phi_n$ factors through the quotient by the ideal generated by $1-T_bT_b^*$, and hence so does the limit $\psi_\omega$. The kernel of $q$ is spanned by the elements $S^m(1-SS^*)S^{*n}$ (they are a family of matrix units spanning $\ker q=\KK(\ell^2)$), and the formula \eqref{defground} implies that
\[
\omega(S^m(1-SS^*)S^{*n})=\psi_\omega(T_{b^m}(1-T_bT_b^*)T_{b^n}^*)=0.
\]
Thus $\omega$ factors through $q$.

Conversely, suppose that $\omega$ factors through $q$. Then there is a probability measure $\mu$ on $\T$ such that $\omega(c)=\int q(c)\,d\mu$ for all $c\in \TT(\N)$. Choose a sequence $\beta_n$ with $\beta_n\to \infty$. Then for each $n$, the state $\psi_{\beta_n,\mu}$ is determined by Corollary~\ref{btsuffices} and the formula \eqref{psivsmoments} for $\psi_{\beta_n,\mu}(T_{b^t})$ in Proposition~\ref{KMSToe}. The sum on the right-hand side of \eqref{psivsmoments} is finite, and for each $k$ we have
\[
e^{-\beta_nk}d^k\int_{\T} z^{c^kd^{-k}t}\,d\mu(z)\to 0\quad\text{as $n\to \infty$.}
\]
Since we also have $1-e^{-\beta_n}d\to 1$, we deduce that
\[
\psi_{\beta_n,\mu}(T_b^t)\to \int z^t\,d\mu(z)=\int q(S^t)\,d\mu=\psi_\omega(T_{b^t}).
\]
Thus $\psi_\omega$ is a KMS$_\infty$ state.

The formula \eqref{defground} shows that $\omega\mapsto \psi_\omega$ is affine, weak* continuous and one-to-one. To see that is is onto, suppose $\phi$ is a ground state. Since $T_b$ is a non-unitary isometry, Coburn's theorem implies that there is an isomorphism $\pi_{T_b}$ of $\TT(\N)$ into $C^*G,P)$ such that $\pi_{T_b}(S)=T^b$, and then $\omega:=\psi\circ\pi_{T_b}$ is a state of $\TT(\N)$. Lemma~\ref{idground} implies that $\phi$ vanishes on all spanning elements except those of the form $T_{b^s}T_{b^t}^*$, and formula \eqref{defground} shows that $\phi$ agrees with $\psi_{\omega}$ on all spanning elements. Thus $\phi=\psi_\omega$, and $\omega\mapsto \psi_\omega$ is onto. Now we can deduce that it is a homeomorphism of the compact state space of $\TT(\N)$ onto the compact set of ground states.
\end{proof}

\appendix\label{app}

\section{Amenability of $(G,P)$}

In setting up our conventions, we implicitly assumed that $(G,P)$ is amenable in the sense of Nica, and here we prove this. This result is not a surprise, since Spielberg proved that his groupoid model is amenable \cite[Theorem~3.23]{SBS}, and the various notions of amenability are all meant to do the same thing. Nevertheless, it is fairly routine to see it directly. For the purposes of this appendix, it is helpful to distinguish between the Toeplitz representation $T$ of $P$ on $\ell^2(P)$ and the universal representation of $P$ in $C^*(G,P)$, which we denote by $i$ (following \cite{LRold}). 

\begin{thm}
\label{BSamenable}
The quasi-lattice ordered group $(G,P)$ is amenable.
\end{thm}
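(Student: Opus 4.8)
The plan is to prove directly that the canonical conditional expectation $E\colon C^*(G,P)\to D$ determined by $E(i(x)i(y)^*)=\delta_{x,y}\,i(x)i(x)^*$ is faithful, since this is exactly amenability in the sense of \cite{N,LRold}. The idea is to factor $E$ as a composition of positive maps whose faithfulness can be checked one piece at a time: first the averaging map for the gauge action $\gamma$, which lands in the fixed-point algebra (the ``core''), and then a map that I will analyse with the help of the concrete Toeplitz representation on $\ell^2(P)$ and the stem combinatorics of \S\ref{sect:stems}. Note that group amenability is \emph{not} available here, since for most $c,d$ the group $G$ contains nonabelian free subgroups; the cone $P$ must do the work.

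First I would use the gauge action to reduce to the core. Let $\mathcal{C}:=\{a\in C^*(G,P):\gamma_z(a)=a\text{ for all }z\in\T\}$ and let $\Phi_0(a)=\int_\T \gamma_z(a)\,dz$ be the associated conditional expectation onto $\mathcal{C}$. For any circle action $\Phi_0$ is faithful: if $a\geq 0$ and $\Phi_0(a)=0$, then $\int_\T \omega(\gamma_z(a))\,dz=0$ for every state $\omega$, and since the integrand is continuous and non-negative it vanishes identically, so $\omega(a)=\omega(\gamma_1(a))=0$ for all $\omega$ and hence $a=0$. Because $\gamma_z(i(x)i(y)^*)=z^{\theta(x)-\theta(y)}i(x)i(y)^*$, we get $\Phi_0(i(x)i(y)^*)=i(x)i(y)^*$ when $\theta(x)=\theta(y)$ and $0$ otherwise; comparing on spanning elements then gives $E=E_0\circ\Phi_0$, where $E_0$ is the restriction of $E$ to $\mathcal{C}$. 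Since $\Phi_0$ is faithful, it suffices to prove that $E_0\colon\mathcal{C}\to D$ is faithful.

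For the second step I would bring in the concrete Toeplitz representation. The compression $\Delta\colon B(\ell^2(P))\to \ell^\infty(P)$ onto the diagonal subalgebra (relative to the orthonormal basis $\{e_x\}$) is a faithful conditional expectation, because a positive operator with vanishing diagonal is zero. A direct computation using $T_y^*e_z=e_w$ when $z=yw$ (and $0$ otherwise) gives $\Delta(T_xT_y^*)=\delta_{x,y}\,T_xT_x^*$, so that $\Delta\circ\pi_T=\pi_T\circ E$ on spanning elements and hence everywhere. Consequently, if $a\in\mathcal{C}$ is positive with $E_0(a)=0$, then $\Delta(\pi_T(a))=\pi_T(E_0(a))=0$, whence $\pi_T(a)=0$ by faithfulness of $\Delta$; so the whole theorem reduces to showing that $\pi_T$ is \emph{injective on the core} $\mathcal{C}$.

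The main obstacle is precisely this last point: verifying that no relations of the universal core collapse under $\pi_T$. Here I would exploit \S\ref{sect:stems}. The core $\mathcal{C}$ is the closed span of the elements $i(\sigma b^s)i(\tau b^t)^*=i(\sigma)\,i(b^s)i(b^t)^*\,i(\tau)^*$ with $\sigma,\tau\in\Sigma_k$ and $s,t\in\N$; by Remark~\ref{r1} the family $\{i(\sigma):\sigma\in\Sigma_k\}$ consists of isometries with mutually orthogonal ranges, and $C^*(\{i(b^s)i(b^t)^*\})\cong \TT(\N)$ by Coburn's theorem. Using Lemma~\ref{maponstems} and Lemma~\ref{lem:iains} to track how stems and trailing powers of $b$ interact under products, I would organise $\mathcal{C}$ as an increasing union of $C^*$-subalgebras built from matrix algebras $M_{d^k}$ over $\TT(\N)$, and then check that each such block is represented faithfully on $\ell^2(P)$, using that $T_\sigma$ maps $\ell^2(P)$ isometrically onto the pairwise orthogonal subspaces $\ell^2(\sigma P)$. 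Confirming that these blocks and their connecting maps (which change the height $k$ via the relation $ab^c=b^da$) are captured without collapse by the concrete representation is the routine-but-delicate heart of the argument; granting it, $\pi_T|_{\mathcal{C}}$ is injective, and the factorisation $E=E_0\circ\Phi_0$ shows that $E$ is faithful, i.e.\ that $(G,P)$ is amenable.
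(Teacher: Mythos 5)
Your two reductions are correct, and they are exactly the ones the paper itself uses: averaging over the gauge action to land in the core (the paper's conditional expectation $\Phi^\theta$), and the diagonal map $\Delta$ on $B(\ell^2(P))$ satisfying $\Delta\circ\pi_T=\pi_T\circ E$, so that everything comes down to injectivity of $\pi_T$ on the core. The problem is that this last step is not ``routine-but-delicate'' bookkeeping that can be granted: it is the entire mathematical content of the theorem for this particular $(G,P)$, and your sketch skips the two points where the work actually happens.

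First, the core is \emph{not} an increasing union of the blocks $B_k:=\lsp\{i(\sigma)Di(\tau)^*:\sigma,\tau\in\Sigma_k,\ D\in C^*(i(b))\}$; one must work with the finite sums $C_k=B_0+\cdots+B_k$, and it is not automatic that such a sum of $C^*$-subalgebras is closed. The paper proves this (Lemma~\ref{lem:ap1} and Corollary~\ref{cor:ap2}) by establishing $B_kB_{k+1}=B_{k+1}$, a computation that uses the Nica relation $i(b)^*i(a)=i(b)^{d-1}i(a)i(b)^{*c}$ to push powers of $i(b)^*$ across $i(a)$; this makes $B_{k+1}$ an ideal in the $C^*$-algebra generated by $C_k$ and $B_{k+1}$, whence $C_k+B_{k+1}$ is closed. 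Second, and more importantly, faithfulness of $\pi_T$ on each individual block $B_k$ (which is where your Coburn/orthogonal-ranges argument lives, cf.\ Lemma~\ref{lem:ap3}) does not by itself give faithfulness on $C_k$: an element $R=R_0+\cdots+R_k$ with $R_i\in B_i$ mixes heights, and $\pi_T(R)=0$ does not obviously force each $\pi_T(R_i)=0$. The paper's key observation (Lemma~\ref{lem:ap4}) is a triangularity property of the decomposition $\ell^2(P)=\bigoplus_j H_j$, where $H_j=\clsp\{e_{\sigma b^n}:\sigma\in\Sigma_j,\ n\in\N\}$: one has $\pi_T(R_i)|_{H_j}=0$ whenever $j<i$, so restricting $\pi_T(R)=0$ to $H_0$ isolates $\pi_T(R_0)|_{H_0}=0$, giving $R_0=0$ by the block result, and the remaining $R_i$ are then peeled off by induction. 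Without these two steps your argument establishes only the (standard) reductions and leaves the theorem unproved; with them, it becomes precisely the paper's proof.
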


We follow the argument of \cite[Proposition~4.2]{LRold}, using the height map $\theta:G\to \Z$ in the role of the map $\phi$ in that proposition (which was later described as a ``controlled map'' in \cite[\S4]{CL2}). Unfortunately, that proposition does not apply as it stands, since $\theta$ does not have the property (i) required of controlled maps in the statement of  \cite[Proposition~4.2]{LRold} --- for example, we have $a\leq ab$, but $\theta(a)=\theta(ab)$. But the general idea works.

By \cite[Corollary~3.3]{LRold}, there is a contraction $\Phi:C^*(G,P) \to \clsp\{i(x)i(x)^*:x\in P\}$ such that \[
\Phi(i(x)i(y)^*) = \begin{cases} i(x)i(x)^* & \text{ if $x=y$}\\0& \text{ otherwise.}\end{cases}
\]
By  \cite[Definition~3.4]{LRold},  $(G,P)$ is amenable if $\Phi$ is faithful in the sense that
$\Phi(R^*R)=0$ implies $R=0$. We consider the dual action $\hat\theta:\T\to \Aut C^*(G,P)$ characterised by $\hat\theta_z(i(x))=z^{\theta(x)}i(x)$. Our strategy is to  analyse the structure of the fixed-point algebra $C^*(G,P)^\theta=\clsp\{i(x)i(y)^*:\theta(x)=\theta(y)\}$ for this action, and show that $\Phi$ factors through the conditional expectation  $\Phi^\theta$ of $C^*(G,P)$ onto   $C^*(G,P)^\theta$ obtained by averaging over $\T$. 

\begin{lemma}
\label{lem:apdefBk}
For $k \geq 0$, the algebraic linear span
\[B_k := \lsp \{i(\sigma)D  i(\tau)^*:\sigma, \tau \in \Sigma_k \text{ and } D \in C^*(i(b))\}\]
is a closed $C^*$-subalgebra of $C^*(G,P)^{\theta}$.
\end{lemma}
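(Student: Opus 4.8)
The plan is to exhibit $B_k$ as the image of a single $*$-homomorphism from a matrix algebra; this makes both the $*$-algebra structure and, crucially, the norm-closedness automatic, rather than requiring direct norm estimates on approximating sums. Write $A:=C^*(i(b))$ and let $M_{\Sigma_k}(A)$ be the $C^*$-algebra of $\Sigma_k\times\Sigma_k$ matrices with entries in $A$; since $\Sigma_k$ is finite (of cardinality $d^k$) this is a genuine $C^*$-algebra. I would define $\rho:M_{\Sigma_k}(A)\to C^*(G,P)$ on a matrix $D=(D_{\sigma,\tau})$ by
\[
\rho(D)=\sum_{\sigma,\tau\in\Sigma_k}i(\sigma)D_{\sigma,\tau}i(\tau)^*,
\]
and observe that its range is exactly the algebraic span $B_k$, because finiteness of $\Sigma_k$ makes every element of $M_{\Sigma_k}(A)$ a finite sum $\sum_{\sigma,\tau} D_{\sigma,\tau}\,e_{\sigma,\tau}$ of matrix units with coefficients in $A$.

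The key input is the orthogonality relation $i(\sigma)^*i(\tau)=\delta_{\sigma,\tau}1$ for $\sigma,\tau\in\Sigma_k$. This is precisely the statement that $\{i(\sigma):\sigma\in\Sigma_k\}$ is a Toeplitz-Cuntz family (Remark~\ref{r1}): the $i(\sigma)$ are isometries with mutually orthogonal ranges, which one also sees directly since distinct stems of equal height satisfy $\sigma\vee\tau=\infty$, whence $i(\sigma)^*i(\tau)=0$ by \eqref{altNica}. Granting this, I would check that $\rho$ is a $*$-homomorphism. It is clearly linear and $*$-preserving, since the adjoint of $(D_{\sigma,\tau})$ is $(D_{\tau,\sigma}^*)$ and relabelling the summation index gives $\rho(D)^*=\rho(D^*)$. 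Multiplicativity follows by collapsing the middle factor:
\[
\rho(D)\rho(E)=\sum_{\sigma,\tau,\sigma',\tau'}i(\sigma)D_{\sigma,\tau}\big(i(\tau)^*i(\sigma')\big)E_{\sigma',\tau'}i(\tau')^*=\sum_{\sigma,\tau'}i(\sigma)\Big(\sum_\tau D_{\sigma,\tau}E_{\tau,\tau'}\Big)i(\tau')^*=\rho(DE).
\]
Because a $*$-homomorphism between $C^*$-algebras has closed range (it factors through the isometric injection of $M_{\Sigma_k}(A)/\ker\rho$), the image $\rho\big(M_{\Sigma_k}(A)\big)$ is a $C^*$-subalgebra of $C^*(G,P)$; as noted above this image is exactly $B_k$, so $B_k$ is a norm-closed $C^*$-subalgebra.

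It remains to place $B_k$ inside the fixed-point algebra $C^*(G,P)^\theta$. Each $\sigma\in\Sigma_k$ has $\theta(\sigma)=k$, so $\hat\theta_z(i(\sigma))=z^k i(\sigma)$, while $\hat\theta_z$ fixes every element of $A=C^*(i(b))$ since $\theta(b)=0$. Hence for a spanning element we get $\hat\theta_z(i(\sigma)Di(\tau)^*)=z^k z^{-k}\,i(\sigma)Di(\tau)^*=i(\sigma)Di(\tau)^*$, so all of $B_k$ is $\hat\theta$-invariant, i.e. $B_k\subseteq C^*(G,P)^\theta$.

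The only step deserving genuine care is the passage from ``$*$-subalgebra'' to ``closed'', and this is exactly what the homomorphism $\rho$ buys us: instead of controlling limits of finite sums by hand, I would let the automatic closedness of $*$-homomorphic images of $C^*$-algebras do the work, the sole nontrivial input being the Toeplitz-Cuntz orthogonality $i(\sigma)^*i(\tau)=\delta_{\sigma,\tau}1$.
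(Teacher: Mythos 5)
Your proposal is correct and takes essentially the same approach as the paper: the paper also realises $B_k$ as the range of a $*$-homomorphism whose multiplicativity rests on the Toeplitz--Cuntz relation $i(\sigma)^*i(\tau)=\delta_{\sigma,\tau}1$, and then uses the automatic closedness of ranges of $*$-homomorphisms; the only difference is that the paper builds the homomorphism on $M_{\Sigma_k}(\C)\otimes_{\max}C^*(i(b))$ from two commuting homomorphisms via the universal property of the maximal tensor product, whereas you define the same map directly on the canonically isomorphic algebra $M_{\Sigma_k}\big(C^*(i(b))\big)$ and check multiplicativity by hand. Your explicit verification that $B_k\subseteq C^*(G,P)^{\theta}$ via the gauge action is a welcome detail the paper leaves implicit.
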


\begin{proof}
Since $\{i(\sigma):\sigma \in \Sigma_k\}$ is a Toeplitz-Cuntz family, $\{i(\sigma)i(\tau)^* : \sigma, \tau \in \Sigma_k\}$ is 
a set of matrix units in the $C^*$-algebra $\overline{B_k}$.
This gives  a homomorphism $\phi:M_{\Sigma_k}(\C) \to \overline{B_k}$ which maps the usual matrix units $\{E_{\sigma\tau}:\sigma,\tau\in \Sigma_k\}$ to $\{i(\sigma)i(\tau)^*\}$. 
There is also a unital homomorphism
$\psi:C^*(i(b)) \to \overline{B_k}$ such that 
 \[\psi(D) = \sum_{\sigma\in\Sigma_k}i(\sigma)Di(\sigma)^*.\] We have
\begin{align*}
 \phi(E_{\sigma\tau})\psi(D)&= i(\sigma)i(\tau)^*\sum_{\mu}i(\mu)Di(\mu)^*=i(\sigma)i(\tau)^*i(\tau)Di(\tau)^*\\
 &=i(\sigma)Di(\tau)^*=\psi(D)\phi(E_{\sigma\tau}).
\end{align*} 
Each $A \in M_{\Sigma_k}(\C)$ is a linear combination of the $E_{\sigma\tau}$, and hence $\psi(D)\phi(A) = \phi(A)\psi(D)$ for all $A \in M_{\Sigma_k}(\C)$ and $D \in C^*(i(b))$.

Since the ranges of $\phi$ and $\psi$ commute, the universal property of the maximal tensor product gives a homomorphism
$\phi \otimes_{\text{max}}\psi$ of  $M_{\Sigma_k}(\C) \otimes C^*(i(b))$ into $\overline{B_k}$.
We claim that the range of $\phi \otimes_{\text{max}}\psi$ is $B_k$. 
Since  $M_{\Sigma_k}(\C) \otimes C^*(i(b))$ is spanned by elements of the form
$E_{\sigma\tau}\otimes D$ (with no closure, see for example \cite[Theorem~B.18]{tfb}), 
the range of $ \phi \otimes_{\text{max}}\psi$ is spanned by $\phi(E_{\sigma\tau})\psi(D)=i(\sigma)D i(\tau)^*$ (no closure) and hence 
equal to $B_k$.  Thus $B_k$ is a closed $C^*$-subalgebra of $C^*(G,P)^{\theta}$.
\end{proof}

\begin{lemma}
\label{lem:ap1}
 For $k \geq 0$, 
 we have $B_kB_{k+1}=B_{k+1}$.
\end{lemma}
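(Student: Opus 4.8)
The plan is to prove the two inclusions separately, with $B_{k+1}\subseteq B_kB_{k+1}$ being the easier one. For this inclusion, observe that the projection $Q_k:=\sum_{\rho\in\Sigma_k}i(\rho)i(\rho)^*$ belongs to $B_k$ (take $D=1$ and $\sigma=\tau=\rho$ in the definition of $B_k$, recalling that $1=i(b)^*i(b)\in C^*(i(b))$). I claim that $Q_k$ is a left identity on $B_{k+1}$. Indeed, each $\sigma'\in\Sigma_{k+1}$ factors uniquely as $\sigma'=\rho_0w_0$, where $\rho_0\in\Sigma_k$ consists of the first $k$ letters of $\sigma'$ and $w_0\in\Sigma_1$ is its last letter. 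For $\rho\in\Sigma_k$, Nica covariance \eqref{altNica} gives $i(\rho)^*i(\sigma')=i(w_0)$ when $\rho=\rho_0$ and $0$ otherwise, since $\rho\vee\sigma'<\infty$ forces $\rho$ to be an initial segment of $\sigma'$ (as in the first paragraph of the proof of Proposition~\ref{defrel}), and the only such $\rho$ is $\rho_0$. Hence $Q_ki(\sigma')=i(\rho_0)i(w_0)=i(\sigma')$, so $Q_k\cdot\big(i(\sigma')Di(\tau')^*\big)=i(\sigma')Di(\tau')^*$ for every generator of $B_{k+1}$. Taking closed linear spans gives $B_{k+1}\subseteq B_kB_{k+1}$.

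For the inclusion $B_kB_{k+1}\subseteq B_{k+1}$ it suffices, by continuity and density, to show that the product of a generator $i(\sigma)Di(\tau)^*$ of $B_k$ (with $\sigma,\tau\in\Sigma_k$) and a generator $i(\sigma')D'i(\tau')^*$ of $B_{k+1}$ lies in $B_{k+1}$. As above, $i(\tau)^*i(\sigma')=0$ unless $\sigma'=\tau w$ for some $w=b^ja\in\Sigma_1$, in which case it equals $i(w)=i(b)^ji(a)$. Setting $D_1:=Di(b)^j\in C^*(i(b))$, the product becomes $i(\sigma)D_1i(a)\,D'i(\tau')^*$, so everything reduces to the key claim that $i(\sigma)D_1i(a)$ lies in the closed linear span of the elements $i(\sigma'')D''$ with $\sigma''\in\Sigma_{k+1}$ and $D''\in C^*(i(b))$. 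Granting the claim, multiplying such a term on the right by $D'i(\tau')^*$ lands in $B_{k+1}$ because $D''D'\in C^*(i(b))$, and closedness of $B_{k+1}$ (Lemma~\ref{lem:apdefBk}) finishes the inclusion.

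To prove the key claim I would use $C^*(i(b))=\clsp\{i(b)^mi(b)^{*n}:m,n\geq 0\}$, so by linearity and continuity it is enough to treat $D_1=i(b)^mi(b)^{*n}$. When $n=0$, write $m=qd+r$ with $0\leq r<d$; iterating \eqref{t1} yields $i(b)^{qd}i(a)=i(a)i(b)^{qc}$, whence $i(\sigma)i(b)^mi(a)=i(\sigma b^ra)\,i(b)^{qc}$, where $\sigma b^ra\in\Sigma_{k+1}$ and $i(b)^{qc}\in C^*(i(b))$. When $n\geq 1$, I would first rewrite $i(b)^{*n}i(a)=i(b)^{d-j}i(a)i(b)^{*Nc}$, where $n=(N-1)d+j$ with $1\leq j\leq d$; this is precisely the computation carried out in the proof of Lemma~\ref{height1Nica} via \eqref{t4'}. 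This reduces $i(\sigma)i(b)^mi(b)^{*n}i(a)$ to $i(\sigma)i(b)^{m+d-j}i(a)\,i(b)^{*Nc}$, which is the $n=0$ case multiplied on the right by $i(b)^{*Nc}\in C^*(i(b))$. This bookkeeping with the relation $b^da=ab^c$, and in particular the interplay between $c$ and $d$, is the main obstacle; once it is in place, both inclusions are established and $B_kB_{k+1}=B_{k+1}$.
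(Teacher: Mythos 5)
Your proof is correct and follows essentially the same route as the paper's: both inclusions are verified on spanning elements, with $B_{k+1}\subseteq B_kB_{k+1}$ obtained by letting a range projection built from $\{i(\rho):\rho\in\Sigma_k\}$ act as a left identity, and $B_kB_{k+1}\subseteq B_{k+1}$ obtained by collapsing $i(\tau)^*i(\sigma')$ via the Toeplitz--Cuntz relations and then pushing powers of $i(b)$ and $i(b)^*$ across $i(a)$ using \eqref{t1} and \eqref{t4'}. In fact your version of the key commutation step is the accurate one: the identity $i(b)^{*n}i(a)=i(b)^{d-j}i(a)i(b)^{*Nc}$ for $n=(N-1)d+j$, $1\leq j\leq d$, is what iterating \eqref{t4'} actually yields, whereas the paper's corresponding formula $i(b)^{*(t-j)}i(a)=i(b)^{(t-j)(d-1)}i(a)i(b)^{*(t-j)c}$ fails for $t-j\geq 2$ (for instance $i(b)^{*2}i(a)=i(b)^{d-2}i(a)i(b)^{*c}$ when $d\geq 2$, because $i(b)^*$ does not commute past $i(b)^{d-1}$); this slip in the paper is harmless, since the corrected formula still produces an element of the form $i(\sigma'')D''$ with $\sigma''\in\Sigma_{k+1}$ and $D''\in C^*(i(b))$, exactly as your argument shows.
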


\begin{proof} Since $\{i(\sigma):\sigma\in \Sigma_k\}$ is a Toeplitz-Cuntz family, we have $i(\sigma)^*i(\tau)=0$ unless $\sigma$ extends $\tau$ or vice-versa. So to see $B_kB_{k+1}\subset B_{k+1}$, it suffices to take $\sigma, \tau, \mu, \nu \in \Sigma_k$, $\mu',\nu' \in \Sigma_1$, $C, D\in C^*(i(b))$, and show that
\begin{equation}\label{eq:app.5}i(\sigma)D i(\tau)^* i(\mu\mu')C i(\nu \nu')^* = \delta_{\tau, \mu} i(\sigma) D i(\mu') C i(\nu\nu')^*\end{equation}
is in $B_kB_{k+1}$.
Suppose that $D = i(b)^s i(b)^{*t}$ for some $s,t \in \N$.
If $\mu' = b^ja$ for some integer $j \in [0,d)$, then
\begin{equation}\label{eq:app1}Di(\mu') = i(b)^si(b)^{*t}i(\mu') = i(b)^si(b)^{*t}i(b)^ji(a).\end{equation}
Now if $j <t$, then (\ref{eq:app1}) is equal to \[i(b)^{s+j-t}i(a) = i(\stem(b^{s+j-t}a))i(b)^q\]
for some $q\in \N$.
On the other hand, if $t \geq j$, then (\ref{eq:app1}) is equal to 
\[i(b)^si(b)^{*(t-j)}i(a) = i(b)^s i(b)^{(t-j)(d-1)}i(a)i(b)^{*(t-j)c}\]
and we write $i(b)^{s+(t-j)(d-1)}i(a)$ as $i(\stem(b^{s+(t-j)(d-1)}a)i(b)^r$ for some $r\in \N$.
Either way, (\ref{eq:app.5}) has the form $i(\sigma\stem(b^na))C'i(\mu\mu')$ and is in $B_{k+1}$.  
Thus $B_kB_{k+1}\subset B_{k+1}.$

To see the reverse containment, let $\sigma,\tau\in\Sigma_{k+1}$ and write $\sigma=\sigma'\sigma''$ where $\sigma'\in \Sigma_k$ and $\sigma'' \in \Sigma_{1}$.  For $i(\sigma)Di(\tau)^* \in B_{k+1}$ we have
\[i(\sigma)Di(\tau)^*=i(\sigma'\sigma'')Di(\tau)^*=i(\sigma')i(\sigma')^*i(\sigma')i(\sigma'')Di(\tau)^*=\big (i(\sigma')i(\sigma')^*\big)\big(i(\sigma)Di(\tau)^*\big),\] which is in $B_kB_{k+1}$.
This extends to arbitrary elements of $B_{k+1}$ and hence $B_k \subset B_kB_{k+1}$.
\end{proof}

\begin{cor}
\label{cor:ap2}
 For $k \geq 0$, $C_k := B_0 + \dots + B_k$
is a $C^*$-subalgebra of the core $C^*(G,P)^{\theta}$  and
\begin{equation*}
C^*(G,P)^{\theta} = \overline{ \textstyle{\bigcup_{k=0}^{\infty}} C_k}.
\end{equation*}
\end{cor}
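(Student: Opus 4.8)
The plan is to handle the two assertions of the corollary separately, using Lemma~\ref{lem:ap1} to control products of the $B_k$ and a standard ideal/quotient argument to obtain norm-closedness of $C_k$.

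First I would record a multiplication rule for the $B_k$. Iterating Lemma~\ref{lem:ap1} gives $B_iB_j=B_j$ whenever $i\le j$: writing $j=i+m$, the inductive step on $m$ reads
\[
B_iB_{i+m}=B_iB_{i+m-1}B_{i+m}=B_{i+m-1}B_{i+m}=B_{i+m},
\]
using Lemma~\ref{lem:ap1} to insert and then remove the factor $B_{i+m-1}$, and the inductive hypothesis in between. Since each $B_k$ is self-adjoint by Lemma~\ref{lem:apdefBk}, taking adjoints yields $B_jB_i=B_j$ for $i\le j$ too, while $B_kB_k\subseteq B_k$ because $B_k$ is an algebra. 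Hence $B_iB_j\subseteq B_{\max(i,j)}$ for all $i,j$, and therefore $C_kC_k=\sum_{i,j\le k}B_iB_j\subseteq C_k$ and $C_k^*=C_k$, so $C_k$ is a $*$-subalgebra of the core $C^*(G,P)^{\theta}$.

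The substantive point is that $C_k$ is norm-closed; a sum of $C^*$-subalgebras is generally not closed, so I would exploit that $B_k$ is an ideal in $C_k$. From the product rule, $C_kB_k=\sum_{i\le k}B_iB_k\subseteq B_k$ and likewise $B_kC_k\subseteq B_k$; passing to norm limits and using that $B_k$ is closed shows that $B_k$ is a closed two-sided ideal of the $C^*$-algebra $\overline{C_k}$. I would then induct on $k$. The base case $C_0=B_0$ is closed by Lemma~\ref{lem:apdefBk}. For the step, assume $C_{k-1}$ is a closed $C^*$-subalgebra; it is then a $C^*$-subalgebra of $\overline{C_k}$. Writing $\pi$ for the quotient map $\overline{C_k}\to\overline{C_k}/B_k$, the image $\pi(C_{k-1})$ is a $C^*$-subalgebra of the quotient, hence closed, so $C_{k-1}+B_k=\pi^{-1}(\pi(C_{k-1}))$ is closed in $\overline{C_k}$. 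Since $C_k=C_{k-1}+B_k$ by definition, $C_k$ is closed, completing the induction. This norm-closedness is the only delicate step, and the ideal property of $B_k$ in $C_k$ is exactly what makes the quotient argument work.

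For the final equality I would show $\bigcup_k C_k$ is dense in the core. Each $B_k$ lies in $C^*(G,P)^{\theta}$, so $\overline{\bigcup_k C_k}\subseteq C^*(G,P)^{\theta}$. Conversely, the core is the closed span of the elements $i(x)i(y)^*$ with $\theta(x)=\theta(y)$, so it suffices to place each such generator in some $C_k$. If $\theta(x)=\theta(y)=k$, then writing the normal forms as $x=\stem(x)b^s$ and $y=\stem(y)b^t$ with $\stem(x),\stem(y)\in\Sigma_k$ gives
\[
i(x)i(y)^*=i(\stem(x))\,\big(i(b)^s i(b)^{*t}\big)\,i(\stem(y))^*,
\]
which has the defining form for $B_k$ with $D=i(b)^s i(b)^{*t}\in C^*(i(b))$; thus $i(x)i(y)^*\in B_k\subseteq C_k$. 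Hence the dense spanning set of the core is contained in $\bigcup_k C_k$, and combining the two inclusions yields $C^*(G,P)^{\theta}=\overline{\bigcup_k C_k}$.
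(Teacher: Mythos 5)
Your proof is correct and follows essentially the same route as the paper: induction on $k$, with the iterated form of Lemma~\ref{lem:ap1} making $B_k$ an ideal against the lower $B_i$'s, and then the fact that a $C^*$-subalgebra plus a closed ideal is closed, with the density argument over the generators $i(x)i(y)^*$ being identical. The only cosmetic difference is that you reprove that closedness fact inline via the quotient map $\overline{C_k}\to\overline{C_k}/B_k$, whereas the paper simply cites \cite[Theorem~3.1.7]{Murphy}.
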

 
\begin{proof}
We  prove that $C_k$ is a $C^*$-subalgebra of $C^*(G,P)^{\theta}$ by induction on $k$.  
Notice that $C_0 = B_0 = C^*(i(b))$  is a $C^*$-subalgebra. 

Suppose that   $C_k$ is a $C^*$-subalgebra of $C^*(G,P)^{\theta}$ for $k\geq 0$.  Lemma~\ref{lem:ap1} implies that
\[C_kB_{k+1} = (B_0 + \dots + B_k)B_{k+1} =B_0B_1 \dots B_kB_{k+1} + \dots+ B_kB_{k+1}=B_{k+1}.\]
It follows that  $B_{k+1}$ is an ideal in the $C^*$-algebra $A$ generated by $C_k$ and $B_{k+1}$.  Since $C_k$ is a subalgebra of $A$, \cite[Theorem~3.1.7]{Murphy} 
implies that $C_k + B_{k+1} = C_{k+1}$ is a $C^*$-subalgebra of $A$ and hence of  $C^*(G,P)^{\theta}$.

Since $C^*(G,P)^{\theta}=\clsp\{i(x)i(y)^*:\theta(x)=\theta(y)\}$, and such $i(x)i(y)^*\in B_{\theta(x)}\subset C_{\theta(x)}$, it follows that $\bigcup C_k$ is dense in $C^*(G,P)^{\theta}$, and hence $C^*(G,P)^{\theta} = \overline{ \textstyle{\bigcup_{k=0}^{\infty}} C_k}$.
\end{proof}

The Toeplitz representation $T$ of $P$ on $\ell^2(P)$ is Nica covariant, and hence induces a homomorphism $\pi_T$ of $C^*(G,P)$ onto the Toeplitz algebra $\mathcal{T}(G, P):=C^*(T_x:x\in P)$ such that $\pi_T\circ i=T$.
We write
\[H_k = \clsp\{e_{\sigma b^n} : \sigma \in \Sigma_k, n \in \mathbb{N}\} \subset \ell^2(P),
\  \text{and then} \ \ell^2(P) =\bigoplus_{k\geq 0} H_k.\]

\begin{lemma}
\label{lem:ap3}
 For $k \geq 0$,
\begin{enumerate}
 \item \label{it1:applem3}  $H_k$ is invariant for ${\pi_T}|_{B_k}$ and
\item \label{it2:applem3} ${\pi_T}|_{B_k}$ is faithful on $H_k$.
\end{enumerate}
\end{lemma}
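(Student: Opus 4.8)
The plan is to deduce both parts from a single observation: under the identification $H_k\cong\ell^2(\Sigma_k)\otimes\ell^2(\N)$ sending $e_{\sigma b^n}$ to $\delta_\sigma\otimes\delta_n$, the restriction of $\pi_T$ to $B_k$ becomes the amplified Toeplitz representation $\operatorname{id}_{M_{\Sigma_k}(\C)}\otimes\rho$, where $\rho$ is the compression of $\pi_T|_{C^*(i(b))}$ to $H_0=\clsp\{e_{b^m}:m\in\N\}$. So I would begin by computing how a spanning element $i(\sigma)Di(\tau)^*$ (with $\sigma,\tau\in\Sigma_k$, $D\in C^*(i(b))$) acts on a basis vector $e_{\sigma'b^n}$ via $\pi_T(i(\sigma)Di(\tau)^*)e_{\sigma'b^n}=T_\sigma\pi_T(D)T_\tau^*e_{\sigma'b^n}$.

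For part~(\ref{it1:applem3}) this computation proceeds in three steps. First, since $\sigma',\tau\in\Sigma_k$ have equal height, $\tau\leq\sigma'b^n$ forces $\tau=\sigma'$: writing $\sigma'b^n=\tau y$ with $y\in P$ gives $\theta(y)=0$, so $y=b^m$, and uniqueness of stems applied to $\sigma'b^n=\tau b^m$ yields $\sigma'=\tau$; hence $T_\tau^*e_{\sigma'b^n}=\delta_{\tau,\sigma'}e_{b^n}\in H_0$. Second, $H_0$ is invariant for $C^*(T_b)$ (as $T_b e_{b^m}=e_{b^{m+1}}$ and $T_b^*e_{b^m}\in H_0$), so $\pi_T(D)e_{b^n}\in H_0$. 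Third, $T_\sigma e_{b^m}=e_{\sigma b^m}\in H_k$. Thus the whole expression lands in $H_k$, and by continuity $H_k$ is invariant for $\pi_T|_{B_k}$.

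Tracking these same three steps through the identification gives part~(\ref{it2:applem3}): one reads off $\pi_T(i(\sigma)Di(\tau)^*)\big|_{H_k}=E_{\sigma\tau}\otimes\rho(D)$, with $E_{\sigma\tau}$ the matrix unit on $\ell^2(\Sigma_k)$. The crucial input is that $\rho$ is \emph{faithful}: $\rho(i(b))=T_b|_{H_0}$ is the unilateral shift on $\ell^2(\N)$, a non-unitary isometry, so Coburn's theorem applies. Composing the surjection $\phi\otimes_{\max}\psi:M_{\Sigma_k}(\C)\otimes C^*(i(b))\to B_k$ of Lemma~\ref{lem:apdefBk} with $\pi_T(\cdot)|_{H_k}$ then produces exactly $\operatorname{id}_{M_{\Sigma_k}(\C)}\otimes\rho$, which is faithful because $M_{\Sigma_k}(\C)$ is nuclear and a matrix amplification of a faithful representation has kernel $M_{\Sigma_k}(\ker\rho)=0$. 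Since an injective composite forces its first factor to be injective, $\phi\otimes_{\max}\psi$ is injective, hence (being surjective onto $B_k$) an isomorphism; and then $\pi_T(\cdot)|_{H_k}=(\operatorname{id}\otimes\rho)\circ(\phi\otimes_{\max}\psi)^{-1}$ is faithful on $B_k$, as required. As a byproduct one also obtains the identification $B_k\cong M_{\Sigma_k}(\C)\otimes C^*(i(b))$.

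The main obstacle is part~(\ref{it2:applem3}): recognising that restricting $\pi_T$ to the invariant subspace $H_k$ manufactures precisely the amplified representation $\operatorname{id}\otimes\rho$, and securing faithfulness of $\rho$ through Coburn's theorem. Part~(\ref{it1:applem3}) and the bookkeeping in the identification are routine once the action on a single basis vector is written out.
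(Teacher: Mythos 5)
Your proof is correct, and it rests on exactly the same two pillars as the paper's: the computation of $\pi_T(i(\sigma)Di(\tau)^*)$ on basis vectors $e_{\mu b^n}$ (your part~(\ref{it1:applem3}) argument is, modulo bookkeeping, the paper's), and Coburn's theorem applied to the restriction of $\pi_T|_{C^*(i(b))}$ to $H_0$. Where you genuinely diverge is in the organisation of part~(\ref{it2:applem3}). The paper argues entrywise: given $B=\sum_{\rho,\mu}i(\rho)D_{\rho,\mu}i(\mu)^*$ with $\pi_T(B)|_{H_k}=0$, it compresses by $T_\sigma^*(\cdot)T_\tau$ to isolate $\pi_T(D_{\sigma,\tau})$, observes that $T_\tau$ maps $H_0$ into $H_k$ so that $\pi_T(D_{\sigma,\tau})|_{H_0}=0$, and invokes Coburn to kill each entry separately. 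You instead identify the map $B\mapsto \pi_T(B)|_{H_k}$, precomposed with the surjection $\phi\otimes_{\max}\psi$ of Lemma~\ref{lem:apdefBk}, with the amplification $\operatorname{id}\otimes\rho$, and deduce faithfulness from injectivity of matrix amplifications of faithful homomorphisms. Both routes are sound and of comparable length. Yours is more structural and yields a byproduct the paper never states: $\phi\otimes_{\max}\psi$ is injective as well as surjective, so $B_k\cong M_{\Sigma_k}(\C)\otimes C^*(i(b))$. The paper's entrywise compression, on the other hand, needs nothing about tensor products or nuclearity beyond what Lemma~\ref{lem:apdefBk} already established, and does not require knowing that the restriction map is multiplicative. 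On that last point, one small remark worth adding to your write-up: to treat $B\mapsto\pi_T(B)|_{H_k}$ as a $*$-homomorphism on $B_k$ you should note that $B_k$ is self-adjoint, so part~(\ref{it1:applem3}) actually makes $H_k$ reducing (not merely invariant) for $\pi_T(B_k)$; alternatively, your comparison of the composite with $\operatorname{id}\otimes\rho$ only needs linearity, contractivity, and agreement on elementary tensors, which avoids the issue entirely.
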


\begin{proof}
For item (\ref{it1:applem3}), take
$i(\sigma)i(b)^li(b)^{*m}i(\tau)^* \in B_k $ and $e_{\mu b^n} \in H_k$ where $m,n \in \N$.
Then 
\[\pi_T(i(\sigma)i(b)^li(b)^{*m}i(\tau)^*)e_{\mu b^n} = \begin{cases}e_{\sigma b^{l+(n-m)}} &\text{if $\mu=\tau$ and } n\geq m\\0&\text{otherwise}
                                                       \end{cases}\]
is again in $H_k$. 

For item (\ref{it2:applem3}), take $B=\sum_{\rho, \mu\in\Sigma_k}i(\rho)D_{\rho, \mu}i(\mu)^* \in B_k$ and  suppose $\pi_T(B)|_{H_k} = 0$. Fix $\sigma,\tau\in \Sigma_k$. Then
\[T_{\sigma}^*\pi_T(B) T_{\tau} = \pi_T(i(\sigma)^*)\pi_T(B)\pi_T(i(\tau))= \pi_T(D_{\sigma,\tau}).\]
Since $T_{\tau}$ is an injection from $H_0$ into $H_k$ and $\pi_T(B)|_{H_k} = 0$, we have $\pi_T(B)T_\tau|_{H_0}=0$. 
Thus $\pi_T(D_{\sigma,\tau})|_{H_0} = 0$.  But  the restriction $(\pi_{T}|_{C^*(i(b))})|_{H_0}$ is generated by a nonunitary 
isometry and hence is faithful by Coburn's theorem (see, for example, \cite[Theorem~3.5.18]{Murphy}).  Thus $D_{\sigma,\tau} = 0$. It follows that $B=0$. \end{proof}

\begin{lemma}
 \label{lem:ap4}
$\pi_T$ is faithful on the core $C^*(G,P)^{\theta}$.
\end{lemma}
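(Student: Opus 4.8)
The plan is to exploit the filtration $C^*(G,P)^{\theta}=\overline{\bigcup_k C_k}$ from Corollary~\ref{cor:ap2} and to reduce the whole statement to showing that $\pi_T$ is injective on each finite stage $C_k=B_0+\dots+B_k$. Since an injective $*$-homomorphism of $C^*$-algebras is automatically isometric, if $\pi_T|_{C_k}$ is injective for every $k$, then $\pi_T$ is isometric on the increasing union $\bigcup_k C_k$, and hence (by continuity, as this union is dense) isometric — and therefore injective — on its closure $C^*(G,P)^{\theta}$. So everything comes down to faithfulness on each $C_k$.

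To prove that $\pi_T|_{C_k}$ is injective I would first record the elementary \emph{height-killing} observation: $\pi_T(B_i)$ annihilates $H_j$ whenever $i>j$. Indeed, a spanning element $i(\sigma)Di(\tau)^*$ of $B_i$ has $\theta(\tau)=i$, and for $e_{\mu b^n}\in H_j$ we have $T_\tau^* e_{\mu b^n}=0$, because $\tau\le\mu b^n$ would force $\theta(\tau)\le\theta(\mu b^n)=j<i$. Now take $c\in C_k$ with $\pi_T(c)=0$ and write $c=\sum_{j=0}^k c_j$ with $c_j\in B_j$ (such a finite decomposition exists because $C_k$ is the iterated ``subalgebra plus ideal'' sum $B_0+\dots+B_k$ of Corollary~\ref{cor:ap2}). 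I would then show $c_j=0$ by induction on $j$: restricting the zero operator $\pi_T(c)$ to the invariant subspace $H_j$, the terms $\pi_T(c_i)$ with $i>j$ vanish by the height-killing observation and those with $i<j$ vanish by the inductive hypothesis $c_i=0$, leaving $\pi_T(c_j)|_{H_j}=0$. Lemma~\ref{lem:ap3}\eqref{it2:applem3}, which asserts that $\pi_T|_{B_j}$ is faithful on $H_j$, then yields $c_j=0$. Hence $c=0$, and $\pi_T|_{C_k}$ is faithful.

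The main point — and the only place where any real work is hidden — is the faithfulness of $\pi_T|_{B_j}$ on the single summand $H_j$, which is exactly Lemma~\ref{lem:ap3}\eqref{it2:applem3} and ultimately rests on Coburn's theorem via the compressions $T_\sigma^*\pi_T(\cdot)T_\tau|_{H_0}$. Granting that, the argument above is purely organizational: the $\theta$-grading of $\ell^2(P)=\bigoplus_{k\ge 0}H_k$ lets me peel off the graded pieces $c_j$ one at a time, each detected on its own invariant subspace $H_j$, and the height-killing property guarantees that these peelings do not interfere. In writing it up I would only double-check two routine points, namely that every element of $C_k$ genuinely decomposes as a finite sum $\sum_{j\le k}c_j$ (from the recursive description in Corollary~\ref{cor:ap2}) and that $H_j$ is invariant for $\pi_T(B_j)$, so that $\pi_T(c_j)|_{H_j}$ really is an operator on $H_j$ (Lemma~\ref{lem:ap3}\eqref{it1:applem3}).
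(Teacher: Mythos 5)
Your proof is correct and follows essentially the same route as the paper's: the same reduction to the dense increasing union $\bigcup_k C_k$, the same height-killing observation that $\pi_T(B_i)$ annihilates $H_j$ for $i>j$, and the same inductive peeling of the summands $c_j\in B_j$ detected on $H_j$ via Lemma~\ref{lem:ap3}. The only difference is that you spell out the standard density/isometry argument that the paper leaves implicit.
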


\begin{proof}
 Since  $C^*(G,P)^{\theta} = \overline{\bigcup_k C_k}$ by Lemma~\ref{cor:ap2}, it suffices to show that
$\pi_T$ is faithful on each $C_k$.  Suppose  $\pi_T(R)=0$ where $R \in C_k$.  Then there exist $R_i\in B_i$ such that $R = R_0 + \dots + R_k$, and then
$\pi_T(R_0) + \dots + \pi_T(R_k) = 0$.

For stems $\sigma$ and $\tau$, if $\theta(\sigma) <\theta(\tau)$ then $T_{\tau}^* e_{\sigma b^n} = 0$. It follows that  $\pi_T(R_i)|_{H_j}=0$
when $j <i$.
Thus 
\[0=\pi_T(R_0)|_{H_0} + \dots + \pi_T(R_k)|_{H_0}=\pi_T(R_0)|_{H_0}, \]
and Lemma~\ref{lem:ap3} implies that $R_0=0$. Then an induction argument gives each $R_i = 0$.  Thus $R=0$, as required.   
\end{proof}

\begin{proof}[Proof of Theorem~\ref{BSamenable}]
We use an argument similar to that of \cite[Proposition~4.2]{LRold}.
Let  $\Phi^{\theta}$ be the conditional expectation of $C^*(G,P)$ onto $C^*(G,P)^{\theta}$ obtained by averaging over the action $\hat\theta$, and recall that $\Phi^{\theta}$ is faithful.
Let $\{E_z\}$ be the usual orthonormal basis for $\ell^{\infty}(P)$.  The diagonal map $\Delta:B(\ell^2(P)) \to \ell^{\infty}(P)$ 
given by
\[\Delta(T) = \sum_{z \in P} E_z TE_z\]  is  faithful, and  $\Delta \circ \pi_T = \pi_T \circ \Phi$ (see the computation on Page~433 of \cite{LRold}).
Now 
\begin{align*}
 \Phi(R^*R) = 0 &\implies \Phi(\Phi^{\theta}(R^*R))= 0 \\&\implies \pi_T(\Phi(\Phi^{\theta}(R^*R)))=0\\& \implies 
 \Delta \circ \pi_T(\Phi^{\theta}(R^*R)) = 0\\& \implies \pi_T(\Phi^{\theta}(R^*R))=0  \text{ because $\Delta$ is faithful}\\
& \implies \Phi^{\theta}(R^*R)=0 \text{ by Lemma~\ref{lem:ap4}}\\
&\implies R=0 \text{ because $\Phi^{\theta}$ is faithful.}
\end{align*}
Thus $\Phi$ is faithful and $(G,P)$ is amenable.
\end{proof}

\begin{cor}\label{same}  The Toeplitz representation $\pi_T:C^*(G,P)\to\mathcal{T}(G, P)$ is faithful.
\end{cor}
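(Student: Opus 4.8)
The plan is to deduce Corollary~\ref{same} directly from the amenability of $(G,P)$, which we have just established in Theorem~\ref{BSamenable}. Amenability in the sense of Nica is, by \cite[Definition~3.4]{LRold}, precisely the statement that the contraction $\Phi$ onto $\clsp\{i(x)i(x)^*:x\in P\}$ is faithful, and the general theory of quasi-lattice ordered groups records that this faithfulness forces the Toeplitz representation to be injective (see \cite[\S4.2]{N} or \cite[Corollary~3.9]{LRold}). So the shortest route is simply to invoke Theorem~\ref{BSamenable} together with this standard implication. There is essentially no new mathematical content.

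If one prefers a self-contained argument, the idea is to reuse the machinery assembled in the proof of Theorem~\ref{BSamenable}. Suppose $R\in C^*(G,P)$ with $\pi_T(R)=0$; the goal is $R=0$. Then $\pi_T(R^*R)=0$, and applying the faithful diagonal map $\Delta:B(\ell^2(P))\to\ell^\infty(P)$ gives $\Delta(\pi_T(R^*R))=0$. Invoking the intertwining relation $\Delta\circ\pi_T=\pi_T\circ\Phi$ (the computation on Page~433 of \cite{LRold}), this reads $\pi_T(\Phi(R^*R))=0$.

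Since $R^*R\geq 0$ and $\Phi$ is positive, the element $\Phi(R^*R)$ lies in $\clsp\{i(x)i(x)^*:x\in P\}$, which is contained in the core $C^*(G,P)^{\theta}$. By Lemma~\ref{lem:ap4}, $\pi_T$ is faithful on the core, so $\pi_T(\Phi(R^*R))=0$ yields $\Phi(R^*R)=0$. Finally, the faithfulness of $\Phi$ --- which is exactly the content of Theorem~\ref{BSamenable} --- forces $R^*R=0$, and hence $R=0$, as required.

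The main obstacle here is purely one of bookkeeping rather than substance: one must confirm that the intertwining relation and the faithfulness of $\pi_T$ on the core, both already in hand, combine to reduce the problem on all of $C^*(G,P)$ to the core, where it has been settled. Once amenability is available, no further estimates or constructions are needed.
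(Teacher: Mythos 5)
Your proposal is correct and its main route is exactly the paper's proof: Corollary~\ref{same} is deduced by citing Theorem~\ref{BSamenable} together with the standard fact that amenability implies injectivity of $\pi_T$ (\cite[\S4.2]{N}, \cite[Corollary~3.8]{LRold}). Your optional self-contained argument is also valid, and is in effect a proof of that cited implication in this setting, correctly assembled from the ingredients already established in the appendix (the intertwining relation $\Delta\circ\pi_T=\pi_T\circ\Phi$, the inclusion of the diagonal in the core, and Lemma~\ref{lem:ap4}).
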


\begin{proof} Since $(G, P)$ is amenable by Theorem~\ref{BSamenable}, $\pi_T$ is faithful by \cite[Corollary~3.8]{LRold}.
\end{proof}


\begin{thebibliography}{99}
\bibitem{BR} O. Bratteli and D.W. Robinson, \emph{Operator Algebras and Quantum Statistical Mechanics II}, second ed., Springer-Verlag, Berlin, 1997.

\bibitem{BaHLR} N. Brownlowe, A. an Huef, M. Laca and I. Raeburn, Boundary quotients of the Toeplitz algebra of the affine semigroup over the natural numbers, \emph{Ergodic Theory Dynam. Systems} \textbf{32} (2012), 35--62. 

\bibitem{CM} A. Connes and M. Marcolli, \emph{Noncommutative Geometry, Quantum Fields, and Motives}, Colloquium Publications, Vol. 55, American Mathematical Society, 2008.

\bibitem{CL1} J. Crisp and M. Laca, On the Toeplitz algebras of right-angled  and finite-type Artin groups, \emph{J. Austral. Math. Soc.}  \textbf{72} (2002), 223--245. 

 \bibitem{CL2} J. Crisp and M. Laca, Boundary quotients and ideals of Toeplitz $C^*$-algebras of Artin groups,  \emph{J. Funct. Anal.} \textbf{242} (2007), 127--156.
 
\bibitem{EaHR} R. Exel, A. an Huef and I. Raeburn, Purely  infinite simple $C^*$-algebras associated to integer dilation matrices, \emph{Indiana Univ. Math. J.} \textbf{60} (2011), 1033--1058.
  
\bibitem{aHLRS} A. an Huef, M. Laca, I. Raeburn and A. Sims, KMS states on the $C^*$-algebras of finite graphs, \emph{J. Math. Anal. Appl.} \textbf{405} (2013), 388--399.

\bibitem{KT} E. Kaniuth and K.F. Taylor, \emph{Induced Representations of Locally Compact Groups}, Cambridge Tracts in Mathematics, vol. 197, Cambridge Univ. Press, 2013.

\bibitem{KP}A. Kumjian and D. Pask, Higher rank graph $C^*$-algebras, \emph{New York J. Math.} \textbf{6} (2000), 1--20.

\bibitem{LRold} M. Laca and I. Raeburn, Semigroup crossed products and the Toeplitz algebras of nonabelian groups,  \emph{J. Funct. Anal.} \textbf{139} (1996), 415--440.

\bibitem{LR} M. Laca and I. Raeburn, Phase transition on the Toeplitz algebra of the affine semigroup over the natural numbers, \emph{Adv. Math.} \textbf{225} (2010), 643--688.

\bibitem{LRR} M. Laca, I. Raeburn and J. Ramagge, Phase transition on Exel crossed products associated to dilation matrices, \emph{J. Funct. Anal.} \textbf{261} (2011), 3633--3664.

\bibitem{LS} R.C. Lyndon and P.E. Schupp, \emph{Combinatorial Group Theory}, Springer-Verlag, Berlin, 1977.

\bibitem{Murphy} G.J. Murphy, \emph{{$C^*$}-Algebras and Operator Theory}, Academic Press, Boston, 1990. 

\bibitem{N} A. Nica, $C^*$-algebras generated by isometries and Wiener-Hopf operators, \emph{J. Operator Theory} \textbf{27} (1992), 17--52.

\bibitem{P} G.K. Pedersen, \emph{$C^*$-Algebras and their Automorphism Groups}, London Math. Soc. Monographs, vol. 14, Academic Press, London, 1979.

\bibitem{RSY2} I. Raeburn, A. Sims and T. Yeend, The $C^*$-algebras of finitely aligned higher-rank graphs, \emph{J. Funct. Anal.} \textbf{213} (2004), 206--240.


\bibitem{tfb} I. Raeburn  and D.P. Williams, \emph{Morita Equivalence and Continuous-Trace {$C^*$}-Algebras}, Math. Surveys and Monographs, vol. 60, Amer. Math. Soc., Providence, 1998.


\bibitem{SBS}  J. Spielberg, $C^*$-algebras for categories of paths associated to the Baumslag-Solitar groups, \emph{J. London Math. Soc.} \textbf{86} (2012), 728--754.

\bibitem{SCP} J. Spielberg, Groupoids and $C^*$-algebras for categories of paths, \emph{Trans. Amer. Math. Soc.} \textbf{366} (2014), 5771--5819.

\end{thebibliography}
\end{document}